\newtheorem{theorem}{Theorem}[section]
\newtheorem{proposition}[theorem]{Proposition}
\newtheorem{lemma}[theorem]{Lemma}
\newtheorem{corollary}[theorem]{Corollary}
\theoremstyle{definition}
\newtheorem{definition}[theorem]{Definition}
\newtheorem{proposition-definition}[theorem]{Proposition-Definition}
\newtheorem{example}[theorem]{Example}
\theoremstyle{remark}
\newtheorem{remark}[theorem]{Remark}
\numberwithin{equation}{section}
\newcommand{\op}{\operatorname}
\newcommand{\bull}{\mbox{\tiny{$\bullet$}}}
\begin{document}
\title{Cohomological invariants of algebraic stacks}

\author{Roberto Pirisi}

\date{March 14, 2016}

\maketitle

\begin{abstract}
The purpose of this paper is to lay the foundations of a theory of invariants in étale cohomology for smooth Artin stacks. We compute the invariants in the case of the stack of elliptic curves, and we use the theory we developed to get some results regarding Brauer groups of algebraic spaces.
\end{abstract}
\section{Introduction}\label{sec:intro}

\emph{Some notation: we fix a base field $k_0$ and a positive number $p$. We will always assume that the characteristic of $k_0$ does not divide $p$. If $X$ is a $k_0$-scheme we will denote by $\op{H}^{i}(X)$ the étale cohomology ring of $X$ with coefficients in $\mu_p^{\otimes i}$ (here $\mu_p^{\otimes 0}:=\mathbb{Z}/p\mathbb{Z}$), and by $\op{H}^{\bull}(X)$ the direct sum $\oplus_i \op{H}^i(X)$. If $R$ is a $k_0$-algebra, we set $\op{H}^{\bull}(R)=\op{H}^{\bull}(\op{Spec}(R))$.}
 
An early example of cohomological invariants dates back to Witt's seminal paper \cite{Wi37}, where the Hasse-Witt invariants of quadratic forms were defined. Many other invariants of quadratic forms, such as the Stiefel-Whitney classes and the Arason invariant were studied before the general notion of étale cohomological invariant was introduced.

This was inspired by the theory of characteristic classes in topology, and is naturally stated in functorial terms as follows.

Denote by (Field/$k_0$) the category of extensions of $k_0$. Its objects are field extensions of $k_0$, and the arrows are morphisms of $k_0$-algebras. We think of $\op{H}^{\bull}$ as a functor from (Field/$k_0$) to the category of graded-commutative $\mathbb{Z}/p$-algebras.

Assume that we are given a functor $F:\left( \text{Field} / k_0 \right) \rightarrow \left( \text{Set} \right)$. A cohomological invariant of $F$ is a natural transformation $F \rightarrow \op{H}^{\bull}$. The cohomological invariants of $F$ form a graded-commutative ring $\op{Inv}^{\bull}(F)$. 

Given an algebraic group $G$, one can define the cohomological invariants of $G$ as $\op{Inv}^{\bull}(\textit{Tors}_G)$, where $\textit{Tors}_G$ is the functor sending each extension $K$ of $k_0$ to the set of isomorphism classes of $G$-torsors over $\op{Spec}(K)$. The book \cite{GMS03} is dedicated to the study of cohomological invariants of algebraic groups; since many algebraic structures correspond to $G$-torsors for various groups $G$ (some of the best known examples are étale algebras of degree $n$ corresponding to $S_n$-torsors, nondegenerate quadratic forms of rank $n$ corresponding to $O_n$-torsors, and central simple algebras of degree $n$ corresponding to $PGL_n$-torsors) this gives a unified approach to the cohomological invariants for various types of structures.

Suppose that $\mathscr{M}$ is an algebraic stack smooth over $k_0$, for example the stack $\mathscr{M}_g$ of smooth curves of genus $g$ for $g \geq 2$. We can define a functor $F_{\mathscr{M}}: \left( \text{Field}/k_0 \right) \rightarrow \left( \text{Set} \right) $  by sending a field $K$ to the set of isomorphism classes in $\mathscr{M}(K)$, and thus a ring $\op{GInv}^{\bull}(\mathscr{M})$ of \emph{general} cohomological invariants of $\mathscr{M}$ defined as natural transformations from $F_{\mathscr{M}}$ to $\op{H}^{\bull}$.

The definition above recovers the definition of cohomological invariants of an algebraic group $G$ when $\mathscr{M}=BG$, the stack of $G$-torsors. However, when the objects of $\mathscr{M}$ are not étale locally isomorphic as in the case of $BG$, this is not the right notion. For example, if $\mathscr{M}$ has a moduli space $M$, every object of $\mathscr{M}(K)$ determines a point $p \in M$, corresponding to the composite $\op{Spec}(K) \rightarrow \mathscr{M} \rightarrow M$. If we denote by $F^p_{\mathscr{M}}$ the subfunctor of $F_{\mathscr{M}}$ corresponding to isomorphism classes of objects in $\mathscr{M}$ with image $p$ it is easy to see that

\[ \op{GInv}^{\bull}\left( F_{\mathscr{M}} \right)=\prod_{p\in M} \op{GInv}^{\bull}\left( F_{\mathscr{M}}^p \right)\]
 
This is clearly too large to be interesting. We need to impose a continuity condition to be able to compare cohomological invariants at points of $\mathscr{M}$ with different images in $M$.

The following condition turns out to be the correct one when $\mathscr{M}$ is smooth. Let $R$ be an Henselian $k_0$-algebra that is a discrete valuation ring, with fraction field $K$ and residue field $k$. We have induced cohomology maps $\op{H}^{\bull}(R) \rightarrow \op{H}^{\bull}(k)$ and $\op{H}^{\bull}(R) \rightarrow \op{H}^{\bull}(K)$; the first is well-known to be an isomorphism \cite[09ZI]{St15}. By composing the second map with the inverse of the first we obtain a ring homomorphism $j_R: \op{H}^{\bull}(k) \rightarrow  \op{H}^{\bull}(K)$. Furthermore, from an object $\xi \in \mathscr{M}(R)$ we obtain objects $\xi_k \in \mathscr{M}(k)$ and $\xi_K \in \mathscr{M}(K)$.

\begin{definition}
A general cohomological invariant $\alpha \in \op{GInv}(\mathscr{M})$ is \emph{continuous} if for every Henselian DVR as above, and every $\xi \in \mathscr{M}(R)$ we have
\[ j_R(\alpha(\xi_k)) = \alpha(K).\]
\end{definition}

Continuous cohomological invariants form a graded subring $\op{Inv}^{\bull}(\mathscr{M})$ of $\op{GInv}(\mathscr{M})$.

When $\mathscr{M}$ is equal to $BG$ all (general) cohomological invariants are continuous (this is a result by Rost \cite[11.1]{GMS03}). Thanks to this from now on we will be able to abuse notation and forget about the word continuous without causing confusion.

 If $\mathscr{M}=X$, where $X$ is a scheme, the cohomological invariants of $X$ can be described as the Zariski sheafification of the presheaf $U \rightarrow \op{H}^{\bull}(U)$, which is in turn equal to the unramified cohomology of $X$ (see \cite[4.2.2]{BO74}).
 
 The Zariski topology is clearly too coarse for a similar result to hold for algebraic stacks. The appropriate class of morphisms to study turns out to be the following.
 
 \begin{definition}
 Let $\mathscr{M},\mathscr{N}$ be algebraic stacks. A \emph{smooth-Nisnevich} covering $f:\mathscr{M} \rightarrow \mathscr{N}$ is a smooth representable morphism such that for every field $K$ and every object $\xi \in \mathscr{N}(K)$ we have a lifting 
 \[ \op{Spec}(K) \xrightarrow{\xi'} \mathscr{M} \xrightarrow{f} \mathscr{N}, \quad f \circ \xi' \simeq \xi.\]
 
 \end{definition}

Cohomological invariants satisfy the sheaf conditions with respect to smooth-Nisnevich morphisms, see (\ref{sheaf}). Namely, given a smooth-Nisnevich morphism $f: \mathscr{M} \rightarrow \mathscr{N}$ the cohomological invariants $\op{Inv}^{\bull}(\mathscr{N})$ are the equalizer of the following diagram

\begin{center}
$\xymatrixcolsep{5pc}
\xymatrix{\op{Inv}^{\bull}(\mathscr{N}) \ar@{->}[r]^{f^{*}} & \op{Inv}^{\bull}(\mathscr{M}) \ar@/^/[r]^{\op{Pr_1}^{*}} \ar@/_/[r]_{\op{Pr_2}^{*}} & \op{Inv}^{\bull}(\mathscr{M} \times_{\mathscr{N}} \mathscr{M}) }
$ 
 
 \end{center}
 
Using this and the result for schemes we obtain the following (thm. \ref{princ}):

\begin{theorem}\label{teo1}
Let $\mathscr{M}$ be an algebraic stack smooth over $k_0$. Denote $\mathscr{C}$ to be the site of $\mathscr{M}$-schemes with covers given by Nisnevich morphisms. Consider the sheafification $\mathscr{H}^{\bull}$ of the presheaf $U \rightarrow \op{H}^{\bull}(U)$ in $\mathscr{C}$. Then
\[ \op{Inv}^{\bull}(\mathscr{M})=H^0(\mathscr{M},\mathscr{H}^{\bull}).\]
\end{theorem}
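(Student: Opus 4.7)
The strategy is to reduce the statement from stacks to schemes by invoking the smooth-Nisnevich descent described just above the theorem, and then to apply the already-cited result that for a smooth scheme $X$ one has $\op{Inv}^{\bull}(X) = H^0(X_{\text{Zar}}, \mathscr{H}^{\bull})$. The key supplementary fact is that for a smooth scheme the Zariski and Nisnevich sheafifications of $U \mapsto \op{H}^{\bull}(U)$ agree, so that the scheme-level result already produces sections of $\mathscr{H}^{\bull}$ in the Nisnevich topology used to define $\mathscr{C}$.

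First I would produce a smooth-Nisnevich atlas $p : U \to \mathscr{M}$ with $U$ a scheme. Starting from any smooth presentation of $\mathscr{M}$, the fibers $U_\xi$ over a point $\xi \in \mathscr{M}(K)$ are smooth $K$-schemes, and by the standard trick of passing to a Nisnevich neighborhood of a smooth $K$-point (guaranteed to exist after possibly replacing $U$ by a union of étale-locally-sectioned opens) one may arrange that every $\xi$ lifts to a genuine $K$-point of $U$. The equalizer presentation of $\op{Inv}^{\bull}$ displayed above then gives
\[
\op{Inv}^{\bull}(\mathscr{M}) = \op{eq}\!\left( \op{Inv}^{\bull}(U) \rightrightarrows \op{Inv}^{\bull}(U \times_{\mathscr{M}} U) \right).
\]
Since $p$ is representable, $U \times_{\mathscr{M}} U$ is an algebraic space, to which the same descent argument applies after choosing a further smooth-Nisnevich atlas by a scheme.

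Next I would identify each term in the equalizer with an $H^0$ of $\mathscr{H}^{\bull}$. For a smooth scheme $X$ the presheaf $U \mapsto \op{H}^{\bull}(U)$ has the same Zariski and Nisnevich sheafifications: both stalks at a point $x$ compute $\op{H}^{\bull}$ of the Henselization $\mathcal{O}_{X,x}^{h}$, using the cited invariance of étale cohomology under Henselization. Hence the scheme-level statement, originally phrased in the Zariski topology, yields $\op{Inv}^{\bull}(U) = H^0(U_{\text{Nis}}, \mathscr{H}^{\bull})$, and similarly for $U \times_{\mathscr{M}} U$ after reducing its algebraic-space structure to a scheme atlas.

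Finally, $H^0(\mathscr{M}, \mathscr{H}^{\bull})$ is by definition of a sheaf the equalizer of $H^0(U, \mathscr{H}^{\bull}) \rightrightarrows H^0(U \times_{\mathscr{M}} U, \mathscr{H}^{\bull})$ for any covering $U \to \mathscr{M}$ in $\mathscr{C}$; a smooth-Nisnevich atlas is such a covering, so comparing the two equalizers yields the claim. I expect the main obstacle to be the first step, producing a smooth-Nisnevich atlas by a scheme: the definition of smooth-Nisnevich forces a lift of every $K$-point on the nose, which is stronger than the mere existence of a smooth surjection, and verifying that such a refinement is always available for a smooth Artin stack will require a careful argument with smooth morphisms and their sections over arbitrary residue fields. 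The Zariski–Nisnevich comparison on smooth schemes, while standard, will also need to be checked at the relative level over $\mathscr{M}$ so that it applies uniformly on all objects of $\mathscr{C}$.
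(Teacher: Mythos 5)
Your architecture --- descend along a smooth-Nisnevich atlas by a scheme and plug in a scheme-level identification --- is the same as the paper's, and your sketch for producing the atlas (sections of smooth morphisms over Henselian local rings, spread out to a Nisnevich neighbourhood) is exactly how the paper proves Proposition \ref{spezz}. But two of your inputs are not actually available as stated. Your Zariski--Nisnevich comparison is wrong as justified: the stalk at $x$ of the \emph{Zariski} sheafification of $U \mapsto \op{H}^{\bull}(U)$ is $\op{H}^{\bull}(\mathcal{O}_{X,x})$ (étale cohomology of torsion sheaves commutes with the filtered limit of affine neighbourhoods), not $\op{H}^{\bull}(\mathcal{O}^{h}_{X,x})$; Gabber's theorem applies only to the Henselization. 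These genuinely differ: by Kummer theory $\op{H}^{1}(k[t]_{(t)},\mu_p)=k[t]_{(t)}^{*}/(k[t]_{(t)}^{*})^{p}$ contains the nonzero class of $1+t$, which maps to zero in $\op{H}^{1}(k,\mu_p)=\op{H}^{1}(\mathcal{O}^{h}_{X,x},\mu_p)$. So the two sheafifications are different sheaves; the fact that their \emph{global sections} nonetheless agree on smooth schemes is part of what has to be proved, not a formal stalk computation.

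More seriously, the scheme-level statement $\op{Inv}^{\bull}(X)=H^{0}(X_{\mathrm{Zar}},\mathscr{H}^{\bull})$ is not a citation. The reference to Bloch--Ogus only identifies the Zariski sheafification with unramified cohomology; identifying the \emph{continuous invariants} of a smooth scheme with its unramified cohomology is the technical heart of the paper (Lemma \ref{Inj}, Corollary \ref{pgen}, Proposition \ref{princschemi}). One must show that an invariant of a smooth irreducible scheme is determined by its value at the generic point --- the paper does this by induction on the dimension of a regular Henselian local ring, reducing to the DVR case of \cite[7.7]{GMS03} --- and, conversely, that every unramified class defines a continuous invariant. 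Your proposal assumes precisely this. Two smaller gaps: the equalizer presentation of $\op{Inv}^{\bull}$ along a smooth-Nisnevich cover is itself one of the paper's theorems (its proof uses the continuity condition to check that the glued candidate is again an invariant); and a smooth-Nisnevich atlas is not literally a covering in $\mathscr{C}$, whose covers are étale, so your final equalizer comparison needs the equivalence of the Nisnevich and smooth-Nisnevich topoi (Corollary \ref{eqtopoi}), not merely the existence of the atlas.
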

 
From this we obtain the fundamental corollary

\begin{corollary} 
$ $
\begin{enumerate}
\item Let $\mathscr{E} \rightarrow \mathscr{M}$ be a vector bundle. Then the pullback $\op{Inv}^{\bull}(\mathscr{M}) \rightarrow \op{Inv}^{\bull}(\mathscr{E})$ is an isomorphism.
\item Let $\mathscr{N}$ be a closed substack of codimension $2$ or more. Then the pullback $\op{Inv}^{\bull}(\mathscr{M}) \rightarrow \op{Inv}^{\bull}(\mathscr{M}\smallsetminus \mathscr{N})$ is an isomorphism.
\end{enumerate}
\end{corollary}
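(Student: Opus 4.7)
The plan is to reduce both statements to the corresponding facts for schemes (homotopy invariance and codimension-two purity of unramified cohomology) via the smooth-Nisnevich sheaf property displayed just above Theorem \ref{teo1}.

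First I would choose a smooth-Nisnevich cover $p: U \to \mathscr{M}$ with $U$ a scheme, which exists because $\mathscr{M}$ is smooth and algebraic, and set $V = U \times_{\mathscr{M}} U$. By Theorem \ref{teo1}, $\op{Inv}^{\bull}(\mathscr{M})$ is the equalizer of $\op{Inv}^{\bull}(U) \rightrightarrows \op{Inv}^{\bull}(V)$, and the analogous description holds for $\mathscr{E}$ and for $\mathscr{M} \smallsetminus \mathscr{N}$ after base-changing $p$ in the obvious way. In each case the equalizer diagram for the target will fit into a commutative square with the equalizer diagram for $\mathscr{M}$, so to get an isomorphism on equalizers it is enough to prove the scheme-level statements for $U$ and $V$.

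For part (1), the base change $E := U \times_{\mathscr{M}} \mathscr{E} \to \mathscr{E}$ is a smooth-Nisnevich cover with double intersection $E \times_{\mathscr{E}} E = V \times_{\mathscr{M}} \mathscr{E}$, and both $E \to U$ and $V \times_{\mathscr{M}} \mathscr{E} \to V$ are vector bundles. So the question reduces to showing that for a smooth scheme (or algebraic space) $X$ and a vector bundle $X' \to X$, the pullback $\op{Inv}^{\bull}(X) \to \op{Inv}^{\bull}(X')$ is an isomorphism. This is the classical homotopy invariance of unramified cohomology: it follows from the $\mathbb{A}^1$-invariance of étale cohomology with $\mu_p^{\otimes i}$ coefficients on smooth schemes (valid since $\op{char}(k_0) \nmid p$) combined with Zariski-local triviality of vector bundles and the Nisnevich sheaf property of $\mathscr{H}^{\bull}$.

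For part (2), set $Z = U \times_{\mathscr{M}} \mathscr{N}$; since $p$ is smooth, $Z \subset U$ is closed of the same codimension as $\mathscr{N} \subset \mathscr{M}$, hence $\geq 2$, and likewise for $Z \times_{\mathscr{M}} U \subset V$. Restricting $p$ gives a smooth-Nisnevich cover $U \smallsetminus Z \to \mathscr{M} \smallsetminus \mathscr{N}$ with double intersection $V \smallsetminus (Z \times_{\mathscr{M}} U)$, reducing the claim to the scheme statement: for $X$ smooth and $Y \subset X$ closed of codimension $\geq 2$, $\op{Inv}^{\bull}(X) \to \op{Inv}^{\bull}(X \smallsetminus Y)$ is an isomorphism. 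This is immediate from the Bloch--Ogus description of $H^0(X, \mathscr{H}^{\bull})$ as the intersection of kernels of residue maps at codimension-one points of $X$, since the codimension-one points of $X$ and of $X \smallsetminus Y$ coincide.

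The main obstacle is the scheme-level homotopy invariance input for (1): the codimension-two claim in (2) is essentially formal from the Gersten-type description of $\mathscr{H}^{\bull}$, whereas homotopy invariance requires genuine work to globalize from the trivial-bundle case (pure $\mathbb{A}^n$-invariance) to arbitrary vector bundles via local trivializations. A secondary technicality is that $V$ is in general only an algebraic space; this is harmless because one further application of the same descent argument, using a Nisnevich cover of $V$ by a scheme, reduces invariants on algebraic spaces to invariants on schemes.
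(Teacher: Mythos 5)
Your argument is correct and is essentially the paper's own proof: both statements are reduced, via the equalizer diagram for a smooth-Nisnevich cover $U \to \mathscr{M}$ by a scheme, to homotopy invariance of unramified cohomology (Rost) and to codimension-two purity, which holds by definition since codimension-one points are unchanged. The only cosmetic difference is that the paper trivializes the bundle on the cover rather than invoking general vector-bundle invariance for schemes, and it is slightly less explicit than you about $U \times_{\mathscr{M}} U$ being only an algebraic space.
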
 

If $\mathscr{M}$ is a smooth quotient stack, that is, a quotient $\left[ X / G \right]$, where $G$ is an affine algebraic group over $k_0$ acting on a smooth algebraic space $X$, then there exists a vector bundle $\mathscr{E} \rightarrow \left[ X / G \right]$ with an open substack $V\rightarrow \mathscr{E}$, where $V$ is an algebraic space, such that the complement of $V$ in $\mathscr{E}$ has codimension at least $2$ \cite[7,Appendix]{EG96}. Then $\op{Inv}^{\bull}(\left[ X / G \right])=\op{Inv}^{\bull}(V)$ (in the case of $\mathscr{M}=BG$ this result was obtained by Totaro in \cite[Appendix C]{GMS03}). For many purposes, this allows to reduce the study of cohomological invariants of general smooth algebraic stacks to the case of smooth algebraic spaces.

When $k_0$ contains a primitive root of unit we can also use the results above to get a spectral sequence involving the cohomological invariants of a smooth quotient stack, following Guillot \cite[sec.5]{Gu08}, which in turn is adapting the Bloch-Ogus spectral sequence and its reinterpretation by Rost. The sequence reads $$E_2^{r,q} = A^r_G(X, \op{H}^{q-r}) \Rightarrow H^{r+q}_{\textit{\'et}}(\left[X/G \right],\mathbb{Z}/p\mathbb{Z})$$
where $A^r_G(X, \op{H}^{q-r})$ is the $r$-codimensional component of the equivariant Chow ring with coefficients in $\op{H}^{\bull}$, and $A^0_G(X, \op{H}^{q})=\op{Inv}^{q}(\left[X/G \right])$.

The main theorem fails as soon as $\mathscr{M}$ is no longer normal as the continuity condition fails to provide sufficient constraints on the invariants. It is possible that some of the theory may work for some intermediate condition between the two.

There is a natural way to extend the notion of cohomological invariants to allow for coefficients in any functor that satisfies the property of Rost's cycle modules \cite{Ro96}. The main example for these, besides étale cohomology, is Milnor's $K$-theory.

 The price to pay for the generalization is a slightly less clear characterization of cohomological invariants. As the isomorphism between the étale cohomology of a Henselian DVR $(R,v)$ and that of its residue field is no longer true, we do not have any natural map $M(k(v))\rightarrow M(k(R))$, and instead we use a different group which Rost denotes $M(v)$. There is a canonical projection $p:M(k(R)) \rightarrow M(v)$, and a natural inclusion $i: M(k(v))\rightarrow M(v)$, so we can reformulate the continuity condition in the following way
 
 \begin{definition}
 Let $\mathscr{X}$ be an algebraic stack, and $M$ a cycle module. A cohomological invariant for $\mathscr{X}$ with coefficients in $M$ is a natural transformation from the functor of points
 
 \[ F_{\mathscr{X}}:(\mbox{field}/k_0) \rightarrow (\mbox{set}) \] 
 to $M$, satisfying the following property: given a Henselian DVR $R$ and a map $\op{Spec}(R) \rightarrow \mathscr{X}$ we have $p(\alpha(k(R))=i(\alpha(k(v))$ in $M(v)$.
 \end{definition}
 
When $M=\op{H}^{\bull}$, the definition is equivalent to the original one. The main theorem still holds, using Rost's $0$-codimensional Chow group with coefficients in $M$ in place of the sheaf $\mathscr{H}$ (thm. \ref{A0}).

 \begin{theorem}\label{teo2}
Let $\mathscr{X}$ be an algebraic stack smooth over $k_0$, and let $M$ be a cycle module. Denote $\mathscr{C}$ to be the site of $\mathscr{M}$-schemes with covers given by Nisnevich morphisms. Consider the sheafification $\mathscr{H}^{\bull}$ of the presheaf $U \rightarrow \op{H}^{\bull}(U)$ in $\mathscr{C}$. Then
\[ \op{Inv}^{\bull}(\mathscr{X},M)=H^0(\mathscr{X},A^{0}(-,M)).\]
\end{theorem}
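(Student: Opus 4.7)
The plan is to follow the same strategy used to prove Theorem \ref{teo1}, replacing the sheaf $\mathscr{H}^\bull$ by the presheaf $U \mapsto A^{0}(U,M)$ on the small Nisnevich site of $\mathscr{X}$. At the level of smooth schemes the theorem is essentially Rost's characterisation of $A^{0}$: an invariant $\alpha \in \op{Inv}^\bull(X,M)$ is determined by its values at the generic points of $X$, the continuity axiom at a Henselian DVR centered at a codimension one point of $X$ is exactly unramifiedness there, and the unramified subgroup of $\bigoplus_{\eta} M(k(\eta))$ is by definition $A^{0}(X,M)$. Hence for smooth $X$ one has $\op{Inv}^\bull(X,M) = A^{0}(X,M)$ functorially in étale $U \to X$, so the presheaf $U \mapsto \op{Inv}^\bull(U,M)$ agrees with $A^{0}(-,M)$ as presheaves on the Nisnevich site.

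The first substantive step is smooth-Nisnevich descent: for a smooth-Nisnevich covering $f\colon \mathscr{U}\to\mathscr{X}$ one needs $\op{Inv}^\bull(\mathscr{X},M)$ to be the equalizer of
\[ \op{Inv}^\bull(\mathscr{U},M) \rightrightarrows \op{Inv}^\bull(\mathscr{U}\times_{\mathscr{X}}\mathscr{U},M). \]
Injectivity of $f^{\ast}$ is immediate from the lifting property in the definition of smooth-Nisnevich covering. To descend a matching invariant one defines its value on field-valued points by choosing a lift to $\mathscr{U}$, uses agreement of the two pullbacks on $\mathscr{U}\times_\mathscr{X}\mathscr{U}$ to see that the definition is independent of the lift, and then verifies continuity: given a Henselian trait $\op{Spec}(R) \to \mathscr{X}$, smoothness of $f$ allows a lift to $\mathscr{U}$, possibly after passing to an unramified extension of $R$, and the specialisations computed upstairs and downstairs are compared through $p\colon M(k(R))\to M(v)$ and $i\colon M(k(v))\to M(v)$ using the base change properties of cycle modules under unramified residue field extensions from \cite{Ro96}.

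Granting descent, I choose a smooth-Nisnevich atlas $U \to \mathscr{X}$ with $U$ a smooth algebraic space, apply the scheme-level identification $\op{Inv}^\bull(U,M) = A^{0}(U,M)$ to both $U$ and $U \times_{\mathscr{X}} U$, and read the equalizer as $H^{0}(\mathscr{X}, A^{0}(-,M))$. The principal obstacle is the continuity part of descent: in the étale setting the isomorphism $\op{H}^\bull(R) \simeq \op{H}^\bull(k)$ made the verification nearly formal, but for a general cycle module the specialisation map factors through the auxiliary group $M(v)$, and one must track how $p$ and $i$ behave when the lift of a Henselian trait along $f$ induces a nontrivial residue field extension. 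Once that bookkeeping is in place, the remainder of the argument is the same diagram-chase as in Theorem \ref{teo1}.
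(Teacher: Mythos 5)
Your strategy coincides with the paper's: it proves the scheme-level identification $\op{Inv}^{\bull}(X,M)=A^0(X,M)$ for smooth $X$ (via a lemma showing that the pullback of an unramified class satisfies the continuity condition, reducing to geometric DVRs by a blowup and invoking Rost's compatibility of the specialization $s_v$ with restriction to the closed point), establishes the sheaf property for smooth-Nisnevich covers exactly as in the étale-coefficient case, and concludes by the same equalizer argument over an atlas. One small correction of emphasis: in the descent step no residue field extension arises, because the Henselian lifting property produces an honest section of the cover over $\op{Spec}(R)$, so no bookkeeping with $p$ and $i$ under extensions is needed there; the genuine technical point is instead the well-definedness of $A^0(X,M)\rightarrow\op{Inv}^{\bull}(X,M)$ at arbitrary (not necessarily geometric) Henselian traits, which you subsume under ``Rost's characterisation of $A^0$.''
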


We can use the new definition to infer some properties of the cohomological Brauer groups of smooth quasi-separated algebraic spaces in characteristic zero, see (\ref{brauer}). Recall that the cohomological Brauer group $\op{Br}'(X)$ is defined as the torsion in $H^2_{\textit{\'et}}(X,G_m)$, and the classical Brauer group $\op{Br}(X)$ injects to it. To the author's best knowledge, the following result is not present in the literature.

\begin{theorem}
Suppose that $\op{char}(k_0)=0$. Then:
\begin{enumerate}
\item Let $X$ be a quasi-separated algebraic space, smooth over $K_0$. Then the functor $U \rightarrow \op{Br}'(U)$ is a sheaf in the Nisnevich site of $X$.
\item (Purity) Let $X$ be a quasi-separated algebraic space, smooth over $K_0$. If $Z$ is a closed subspace of codimension $2$ or greater, then $\op{Br}'(X)= \op{Br}'(X\smallsetminus Z)$.
\item (Birational invariance) Let $X,Y$ be quasi separated algebraic spaces smooth and proper over $k_0$. If they are birational to each other then $\op{Br}'(X)=\op{Br}'(Y)$.
\end{enumerate}
\end{theorem}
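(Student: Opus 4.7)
The plan is to reduce both statements to the purity corollary of Theorem~\ref{teo1} via the identification, for a smooth quasi-separated algebraic space $X$ over $k_0$ and every $n$ invertible on $k_0$,
\[ \op{Br}'(X)[n] \;\cong\; \op{Inv}^{2}(X,\mu_n) \;=\; H^{0}(X, \mathscr{H}^{2}). \]
This identification rests on three classical ingredients. Since $K(X)$ is a field, Kummer gives $\op{H}^{2}(K(X),\mu_n) = \op{Br}(K(X))[n]$; Grothendieck--Gabber identifies $\op{Br}'(X)$ with the subgroup of $\op{Br}(K(X))$ of classes with trivial residue at every codimension-$1$ point of $X$; and the Gersten resolution for étale cohomology identifies $H^{0}(X,\mathscr{H}^{2})$ with the same unramified subgroup of $\op{H}^{2}(K(X),\mu_n)$. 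In the setting of algebraic spaces I would deduce the latter two statements from their scheme counterparts by étale descent from a smooth scheme atlas $U \to X$.

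Granting this identification, part (1) is immediate: applying part~(2) of the corollary following Theorem~\ref{teo1} with coefficients in the appropriate cycle module via Theorem~\ref{teo2}, we obtain $\op{Inv}^{2}(X,\mu_n) \xrightarrow{\sim} \op{Inv}^{2}(X \setminus Z,\mu_n)$ for every $n$, hence $\op{Br}'(X)[n] \cong \op{Br}'(X \setminus Z)[n]$; taking the colimit over $n$ and using that $\op{Br}'$ is torsion by definition gives $\op{Br}'(X) = \op{Br}'(X \setminus Z)$.

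For part (2), by the same identification it suffices to show that $H^{0}(X,\mathscr{H}^{2}) = H^{0}(Y,\mathscr{H}^{2})$ for smooth proper birational $X, Y$. This is the birational invariance of unramified cohomology, which in characteristic zero follows from resolution of singularities (Colliot--Thélène--Ojanguren): for smooth proper $X$ the group $H^{0}(X,\mathscr{H}^{2}) \subseteq \op{H}^{2}(K(X),\mu_n)$ coincides with the classes unramified at every discrete valuation of $K(X)/k_0$, and so depends only on the function field. Alternatively, one can combine part~(1) with the Abramovich--Karu--Matsuki--Włodarczyk weak factorization theorem to reduce to a single blowup $\pi \colon Y \to X$ along a smooth centre of codimension $\geq 2$: with $E$ the exceptional divisor one has $Y \setminus E \cong X \setminus Z$, and the composite $\op{Br}'(X) \xrightarrow{\pi^{*}} \op{Br}'(Y) \hookrightarrow \op{Br}'(Y \setminus E)$ equals the purity isomorphism of part~(1), forcing $\pi^{*}$ itself to be an isomorphism. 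The main obstacle throughout is the extension of Grothendieck--Gabber and the Gersten resolution from smooth schemes to smooth quasi-separated algebraic spaces; once this is in place, both parts are formal consequences of the machinery of Theorems~\ref{teo1} and~\ref{teo2}.
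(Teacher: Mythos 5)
Your overall strategy -- identify $\op{Br}'(X)[n]$ with degree-$2$ cohomological invariants with $\mu_n$-type coefficients and then quote purity and birational invariance of invariants -- is exactly the paper's strategy, and part (1) is handled the same way in both. The two supporting steps, however, are done quite differently. For the identification, the paper does not invoke Grothendieck--Gabber purity or the Gersten resolution for algebraic spaces; it first proves directly that $\op{Br}'$ is a sheaf for the Nisnevich topology on a smooth algebraic space (via the Grothendieck spectral sequence for $X_{\textit{Nis}}\to X_{\textit{\'et}}$ and the vanishing of $H^{\geq 1}_{\textit{Nis}}$ of $\mathbb{G}_m$-type sheaves), and then gets $\op{Br}'(X)_n=\op{Inv}^2(X,\op{H}^{\bull}(n,-1))$ from the Kummer sequence plus the Nisnevich-local vanishing of $\op{Pic}$. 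Be careful with your phrase ``\'etale descent from a smooth scheme atlas'': the unramified subgroup of $\op{H}^2(K(X),\mu_n)$ is not an \'etale-local notion (the function field changes along an \'etale cover, and $\mathscr{H}^2$ is only a Zariski/Nisnevich sheaf), so the descent you need is Nisnevich descent from a Nisnevich atlas by a scheme -- which is precisely what the paper's Proposition--Definition identifying $\op{Inv}^{\bull}(X,M)$ with $A^0(X,M)$ for quasi-separated algebraic spaces provides. As stated, this step of your argument is the one genuine gap.

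For part (2), both of your routes are heavier than the paper's. The paper proves birational invariance of $\op{Inv}^{\bull}(-,M)$ for smooth proper quasi-separated algebraic spaces directly: given $\alpha\in A^0(Y,M)$ and a codimension-$1$ point $x$ of $X$, it uses Chow's lemma to replace $Y$ by a proper scheme $Y'$ birational to it and the valuative criterion of properness to extend the rational map from a Nisnevich neighbourhood of $x$ to $Y'$, whence $\alpha$ is unramified at $x$. This needs neither resolution of singularities nor weak factorization, and in particular avoids having to know weak factorization for algebraic spaces, which is what your blowup argument would require. Your resolution-of-singularities route (unramifiedness over all divisorial valuations of $K(X)/k_0$ depends only on the function field) is correct in characteristic zero for schemes, but again its transport to algebraic spaces is exactly the content one must supply; the valuative-criterion argument is both more elementary and already adapted to that generality.
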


\emph{Aknowledgements:} This paper is part of the author's Ph.D. thesis at the Scuola Normale Superiore of Pisa. I thank Angelo Vistoli for being the best Ph.D. advisor I could hope for. I thank Burt Totaro, Zinovy Reichstein and Arthur Ogus for their many helpful suggestions and careful reading of my material. 
I thank Benjamin Antineau, John Calabrese, Raju Krishnamoorty, Mattia Talpo, Fabio Tonini, Sophie Marques and Giulio Codogni for many useful discussions.
\section{Cohomological invariants}

We begin by defining the notion of a cohomological invariant and exploring some of its consequences.

\begin{lemma}\label{coomhens}
Let $R$ be an Henselian ring, $k$ its residue field. The closed immersion $\op{Spec}(k) \rightarrow \op{Spec}(R)$ induces an isomorphism of graded rings $\op{H}^{\bull}(R) \rightarrow \op{H}^{\bull}(k)$.
\end{lemma}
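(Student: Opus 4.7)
The plan is to reduce the statement to a Galois-cohomological comparison via the strict Henselization of $R$. Let $R^{\op{sh}}$ denote the strict Henselization, with residue field a separable closure $k^{\op{sep}}$ of $k$; it is the filtered colimit of the finite étale $R$-algebras corresponding to the finite separable subextensions of $k^{\op{sep}}/k$, and the absolute Galois group $G = \op{Gal}(k^{\op{sep}}/k)$ acts on it, compatibly with its tautological action on $k^{\op{sep}}$. Both $\op{Spec}(R^{\op{sh}}) \to \op{Spec}(R)$ and $\op{Spec}(k^{\op{sep}}) \to \op{Spec}(k)$ are pro-Galois covers with group $G$, and each produces a Hochschild-Serre spectral sequence converging to the étale cohomology of the base with $\mu_p^{\otimes j}$ coefficients.

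The first key step is to verify that $\op{H}^q(\op{Spec}(R^{\op{sh}}), \mu_p^{\otimes j}) = 0$ for $q > 0$. Because étale cohomology with torsion coefficients commutes with cofiltered limits of qcqs schemes having affine transition maps, this reduces to the standard vanishing of higher étale cohomology on a strictly Henselian local ring with coefficients in a finite locally constant sheaf of order prime to the residue characteristic, which applies here because $p$ is invertible in $k_0$. The analogous vanishing over the separably closed field $k^{\op{sep}}$ is classical. Both spectral sequences therefore collapse, yielding
\[ \op{H}^i(R, \mu_p^{\otimes j}) = \op{H}^i\bigl(G, \mu_p(R^{\op{sh}})^{\otimes j}\bigr), \qquad \op{H}^i(k, \mu_p^{\otimes j}) = \op{H}^i\bigl(G, \mu_p(k^{\op{sep}})^{\otimes j}\bigr). \]

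To conclude, I would check that reduction modulo the maximal ideal of $R^{\op{sh}}$ gives a $G$-equivariant isomorphism $\mu_p(R^{\op{sh}}) \to \mu_p(k^{\op{sep}})$. Surjectivity is Hensel's lemma applied to $x^p - 1$, whose derivative $p x^{p-1}$ is a unit in $R^{\op{sh}}$ because $p$ is invertible in $k_0$; injectivity comes from the uniqueness half of Hensel's lemma, as a $p$-th root of unity reducing to $1$ must equal $1$. This isomorphism is precisely the one induced by restriction along $\op{Spec}(k) \to \op{Spec}(R)$, so combining it with the two Hochschild-Serre identifications shows that the map on étale cohomology induced by the closed immersion is an isomorphism in every degree, and is moreover compatible with cup products and hence gives an isomorphism of graded rings. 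The only real subtlety is justifying the pro-Galois Hochschild-Serre formalism; once that is in hand, the rest is Hensel's lemma and standard facts about strictly Henselian rings.
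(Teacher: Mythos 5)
Your argument is correct, but it takes a genuinely different route from the paper: the paper disposes of this lemma in one line by citing Gabber's theorem (the affine analogue of proper base change, \cite[09ZI]{St15}), which asserts $\op{H}^q(\op{Spec}(R),\mathcal{F})\cong \op{H}^q(\op{Spec}(k),\mathcal{F}|_k)$ for an \emph{arbitrary} torsion abelian sheaf $\mathcal{F}$ on any henselian local ring. You instead give a self-contained proof tailored to the coefficients actually at hand: since $\mu_p^{\otimes j}$ is finite locally constant of order invertible on $\op{Spec}(R)$, it becomes constant on the strictly henselian $\op{Spec}(R^{\op{sh}})$, whose higher cohomology vanishes for any abelian sheaf (global sections there is the stalk functor at the closed geometric point, hence exact — you do not even need the limit argument for this part, only for the pro-Galois Hochschild--Serre descent); both sides then collapse to continuous $G$-cohomology, and Hensel's lemma applied to $x^p-1$ identifies the coefficient modules $G$-equivariantly. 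This is the classical argument (essentially the one in SGA 4 / Milne for locally constant coefficients of invertible order) and it buys transparency and independence from a deep theorem; what it does not buy is the generality of Gabber's statement, which the paper does not need here but which makes the citation a strictly stronger black box. The compatibility claims you make at the end (that the composite identification is the one induced by $\op{Spec}(k)\to\op{Spec}(R)$, and that it respects cup products) are the standard functoriality of Hochschild--Serre and are fine. Your proof implicitly uses that $R$ is a $k_0$-algebra so that $p$ is invertible in the residue field; this is the paper's standing convention, but it is worth noting that your argument genuinely requires it, whereas Gabber's theorem does not.
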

\begin{proof}
This is Gabber's theorem, \cite[09ZI]{St15}.
\end{proof}

Given a Henselian ring $R$, with residue field $k$ and field of fractions $K$, we can construct a map 

 \[ j: \op{H}^{\bull}(\op{Spec}(k)) \rightarrow \op{H}^{\bull}(\op{Spec}(K))\] 
 by composing the inverse for the isomorphism $\op{H}^{\bull}(\op{Spec}(R)) \rightarrow \op{H}^{\bull}(\op{Spec}(k))$ and the pullback $\op{H}^{\bull}(\op{Spec}(R)) \rightarrow \op{H}^{\bull}(\op{Spec}(K))$. The same map $j$ is obtained by different methods in \cite[7.6,7.7]{GMS03}.
 
\begin{definition}
Let $\mathscr{M}$ be an algebraic stack. A \emph{cohomological invariant} of $\mathscr{M}$ is a natural transformation
\begin{center}
$\alpha: \faktor{\op{Hom}(-,\mathscr{M})}{\cong} \rightarrow \op{H}^{\bull}(-)$
\end{center}
seen as functors from $\left(\faktor{\textit{field}}{k_0}\right)$ to $(\textit{set})$, satisfying the following property:
\item Let $X$ be the spectrum of a Henselian DVR, $p,P$ its closed and generic points. Then given a map $f: X \rightarrow \mathscr{M}$, we have 
\begin{equation}\label{def1}
\alpha(f\circ P) = j(\alpha(f\circ p)).
\end{equation}
The grading and operations on cohomology endow the set of cohomological invariants of $\mathscr{M}$ with the structure of a graded ring, which we will denote $\op{Inv}^{\bull}(\mathscr{M})$.
\end{definition}

As stated in the introduction, if $\mathscr{M}$ is the stack of $G$ torsors for an algebraic group $G$ condition \ref{def1} is automatic. This is proven in \cite[11.1]{GMS03}. The continuity condition has the very important consequence of tying cohomological invariants to another well-known invariant, unramified cohomology.

\begin{definition}
Let $K$ be a field. Given a discrete valuation $v$ on $K$ there is a standard residue map $\partial_{v} : \op{H}^{\bull}(K) \rightarrow \op{H}^{\bull}(k(v))$ of degree $-1$ (for example it is constructed in \cite[sec.6-7]{GMS03}).

Consider a field extension $K/k$. The \emph{unramified cohomology} $\op{H}^{\bull}_{nr}(K/k)$ is the kernel of the map
 
 \[\partial: \op{H}^{\bull}(K) \rightarrow \bigoplus_{v} \op{H}^{\bull}(k(v)) \]
 defined as the direct sum of $\delta_{v}$ over all discrete valuations on $K$ that are trivial on $k$. Given a normal scheme $X$ over $k$ we define the unramified cohomology $\op{H}_{nr}^{\bull}(X)$ to be the kernel of the map
 
 \[\partial_X: \op{H}^{\bull}(K) \rightarrow \bigoplus_{\substack{p \in X^{(1)}}} \op{H}^{\bull}(k(v_p)) \]
here the sum is over all valuations induced by a point of codimension one. For an irreducible scheme $X$ we define $\op{H}_{nr}^{\bull}(X)$ to be the unramified cohomology of its normalization.  If $R$ is a $k_0$-algebra we denote $\op{H}_{nr}^{\bull}(R)= \op{H}_{nr}^{\bull}(\op{Spec}(R))$.
\end{definition}

When $X$ is an irreducible and normal scheme, proper over $\op{Spec}(k)$, we have the equality $\op{H}^{\bull}_{nr}(X)=\op{H}^{\bull}_{nr}(k(X)/k)$, making unramified cohomology a birational invariant. It was first introduced to study rationality problems in the guise of the unramified Brauer group \cite{Sa84}, and then generalized to all degrees \cite{CO89}.

\begin{remark}\label{unramif}
Let $X$ be the spectrum of a DVR $R$, with residue field $k$ and fraction field $K$. The residue map $\partial_v: \op{H}^{\bull}(K) \rightarrow \op{H}^{\bull}(k)$ has the property that if we consider the map $X^h \xrightarrow{h} X$ induced by the Henselization of $R$, and the residue map $\partial_{v'}$ on $X^h$ we have (see \cite[sec 1, R3a]{Ro96})
\[\partial_{v'} \circ h^* = \partial_v : \op{H}^{\bull}(K) \rightarrow \op{H}^{\bull}(k).\]

As we have $\partial_{v'} \circ j = 0$ \cite[7.7]{GMS03}, we conclude that given a normal scheme $X$ and a cohomological invariant $\alpha \in \op{Inv}^{\bull}(X)$ the value of $\alpha$ at the generic point $\xi_X$ belongs to $\op{H}^{\bull}_{nr}(X)$.
\end{remark}

In fact we can say much more. There is an obvious pullback on cohomological invariants:

\begin{definition}
Given a morphism $f: \mathscr{M} \rightarrow \mathscr{N}$, we define the pullback morphism $f^* : \op{Inv}^{\bull}(\mathscr{N}) \rightarrow \op{Inv}^{\bull}(\mathscr{M})$ by setting $f(\alpha)(p) = \alpha(p\circ f)$.
\end{definition}

Given any map $f: X \rightarrow \mathscr{M}$ from an irreducible scheme $X$, with generic point $\xi_X$, consider the pullback of a cohomological invariant $\alpha \in \op{Inv}(\mathscr{M})$. We can apply the remark above to $f^*(\alpha)$, obtaining $\alpha(\xi_X)=f^*(\alpha)(\xi_X) \in \op{H}^{\bull}_{nr}(X)$. In the case of a closed immersion $V \rightarrow X$, with $V$ irreducible, this says that the value $\alpha(\xi_V)$ belongs to the unramified cohomology $\op{H}^{\bull}_{nr}(V)$.

It is immediate to check that the cohomological invariants of the spectrum of a field are canonically isomorphic to its étale cohomology. Moreover, there is a natural map from étale cohomology to cohomological invariants sending an element $x \in \op{H}^{\bull}(\mathscr{M})$ to the invariant $\tilde{x}$ defined by $\tilde{x}(p)=p^*(x)$. The étale cohomology of an algebraic stack is defined as the sheaf cohomology in its Lisse-étale site \cite[01FQ, 0786]{St15}.

 If $R,k,K$ are as in the definition the elements $\tilde{x}(p), \tilde{x}(P)$ are both pullbacks of $f^*(x) \in \op{H}^{\bull}(R)$, and the functoriality of pullback allows us to conclude that the continuity condition (\ref{def1}) is fulfilled. This map is clearly not injective, as the next example shows.

\begin{example}\label{Ex1}
An easy example of the map $\op{H}^{\bull}(\mathscr{M}) \rightarrow \op{Inv}^{\bull}(\mathscr{M})$ not being injective comes from computing the cohomology ring of $\op{B}(\mathbb{Z}/2)$ with coefficients in $\mathbb{F}_2$ over an algebraically closed field of characteristic different from $2$. By the Hochschild-Serre spectral sequence and group cohomology we obtain $\op{H}^{\bull}(\op{B}(\mathbb{Z}/2)) = \mathbb{F}_{2}[t]$, where $\op{deg}( t )= 1$, while $\op{Inv}(\op{B}(\mathbb{Z}/2),p) = \faktor{\mathbb{F}_{2}[t]}{t^2}$ when $p=2$ \cite[16.2]{GMS03}.
\end{example}

\section{The smooth-Nisnevich sites}

We want to make $\op{Inv}^{\bull}$ into a sheaf for an appropriate Grothendieck topology. It cannot be a sheaf in the étale topology as the pullback through a finite separable extension is not in general injective for étale cohomology. The Zariski topology is not satisfactory as algebraic stacks do not have Zariski covers by schemes. The Nisnevich topology, consisting of étale morphisms $X \rightarrow Y$ having the property that any map from the spectrum of a field to $Y$ lifts to $X$ looks like a promising compromise, at least for Deligne-Mumford stacks. Unfortunately it still does not fit our needs, as the following example shows:

\begin{example}
There are Deligne-Mumford stacks that do not admit a Nisnevich covering by a scheme, and this is a very common occurrence.

Consider $\mathscr{M}=B(\mu_2)$. The $\mu_2$-torsor $P=\op{Spec}(k(t))\rightarrow \op{Spec}(k(t^2))$ is not obtainable as the pullback of any torsor $T \rightarrow \op{Spec}(k)$ with $k$ finite over $k_0$. This shows that given an étale map $X \rightarrow \mathscr{M}$ there cannot be a lifting of $P$ to $X$, as any point of $X$ will map to a torsor $T$ as above.

This is due to the essential dimension (\cite{BRA11},\cite{BR97},\cite{BF03}): whenever we have a strict inequality $\op{dim}(\mathscr{M}) < \op{ed}(\mathscr{M})$ there can be no Nisnevich cover of $\mathscr{M}$ by a scheme. For an irreducible Deligne-Mumford stack this happens whenever the generic stabiliser is not trivial. The stack $\mathscr{M}=B(\mu_2)$ we considered for this example has dimension equal to zero and essential dimension equal to one.
\end{example}

To solve this problem, we admit all smooth maps satisfying the lifting property:

\begin{definition}
Let $f: \mathscr{M} \rightarrow \mathscr{N}$ be a representable morphism of algebraic stacks, $p \in \mathscr{N}(K)$. Then $f$ is a \emph{Smooth-Nisnevich} neighbourhood of $p$ if it is smooth and there is a representative $\op{Spec}(K) \rightarrow \mathscr{N}$ of the isomorphism class of $p$ such that we have a lifting 
\begin{center}

$\xymatrixcolsep{5pc}
\xymatrix{& \mathscr{M} \ar@{->}[d]^{f} \ar@{<-}[dl]_{p'}\\ 
\op{Spec}(k) \ar@{->}[r]^{p} & \mathscr{N}}$

\end{center}

$f$ is a \emph{Smooth-Nisnevich} cover if for every field $K$ and every $p \in \mathscr{N}(K)$ it is a \emph{Smooth-Nisnevich} neighbourhood of $p$.

Note that if $f$ is a \emph{Smooth-Nisnevich neighbourhood} of $p$, then given any representative of $p$ such lifting exists.

\end{definition}

This topology looks awfully large. Luckily, as we will prove shortly, when restricted to schemes it coincides with the usual Nisnevich topology. Recall that given a quasi-separated algebraic space we always have an étale Nisnevich cover by a scheme \cite[6.3]{Kn71}, so we can trivially extend the Nisnevich topology to the category of quasi-separated algebraic spaces.

\begin{proposition}\label{spezz}
Denote by $\faktor{\op{Alg}}{k_0}$ the category of quasi-separated algebraic spaces over the spectrum of $k_0$. Let $F$ be a presheaf on $\faktor{\op{Alg}}{k_0}$. The sheafification of $F$ with respect to the Ninsnevich topology is the same as its sheafification with respect to the Smooth-Nisnevich topology.
\end{proposition}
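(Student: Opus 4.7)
The plan is to show that every Smooth-Nisnevich cover of a quasi-separated algebraic space admits a refinement by a Nisnevich cover. Since the Nisnevich topology is visibly coarser than the Smooth-Nisnevich one (étale morphisms are smooth, and the two lifting conditions are identical), this refinement property implies that every Nisnevich sheaf automatically satisfies Smooth-Nisnevich descent, and hence the two sheafification functors coincide. A general smooth-Nisnevich covering family reduces at once to the case of a single representable smooth-Nisnevich morphism $f : X \to Y$ by passing to the disjoint union, so I concentrate on that case.

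The key input is the classical fact that a smooth morphism admits an étale-local section through any prescribed section over the residue field of a point of the target (EGA IV, $17.16.3$). Given $y \in Y$, the smooth-Nisnevich hypothesis supplies a lift $s_y : \op{Spec} k(y) \to X$ of the canonical map $\op{Spec} k(y) \to Y$; smoothness of $f$ then furnishes an étale neighborhood $U_y \to Y$ of $y$, with residue field $k(u_y) \simeq k(y)$ at a distinguished preimage $u_y \in U_y$, together with a morphism $U_y \to X$ over $Y$ extending $s_y$. The disjoint union $Z := \coprod_{y \in Y} U_y$ then gives an étale morphism $Z \to Y$ that is a Nisnevich cover by construction and factors through $f$, providing the desired refinement.

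To handle algebraic spaces rather than schemes, I would choose étale Nisnevich covers by schemes $Y_0 \to Y$ and $X_0 \to X \times_Y Y_0$, both granted by \cite[6.3]{Kn71}. The induced morphism $X_0 \to Y_0$ is a smooth morphism of schemes inheriting the Nisnevich lifting property at residue fields of points of $Y_0$, so the scheme case yields an étale Nisnevich refinement $Z_0 \to Y_0$ factoring through $X_0$; the composition $Z_0 \to Y_0 \to Y$ then refines $f$ in the Nisnevich topology over $Y$. The only substantive ingredient is the étale-local section theorem for smooth morphisms; the rest is formal bookkeeping, and I expect no further obstacle once that lemma is invoked.
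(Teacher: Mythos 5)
Your proposal is correct and follows essentially the same route as the paper: reduce to schemes via Nisnevich covers of the algebraic spaces by schemes, then produce a Nisnevich-local section of the smooth cover through the prescribed residue-field lift. The paper packages the key input as the existence of a section over the Henselization of the local ring (spread out to a Nisnevich neighbourhood by the limit description), while you invoke EGA IV 17.16.3 directly; these are the same theorem in different clothing.
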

\begin{proof}
We need to show that any smooth-Nisnevich cover has a section Nisnevich locally. As we can take a Nisnevich cover of an algebraic space that is a scheme, we can restrict to schemes. Recall (see \cite[ch.6,2.13-2.14]{Li02}) that if $f: X \rightarrow \op{Spec}(R)$ is a smooth morphism from a scheme to the spectrum of an Henselian ring with residue field $k$, given a $k$-rational point $p$ of $X$ there is always a section of $f$ sending the closed point of $\op{Spec}(R)$ to $p$. Let now $X \xrightarrow{\pi} Y$ be a smooth-Nisnevich cover. Consider the diagram:

\begin{center}
$\xymatrixcolsep{5pc}
\xymatrix{ X\times_{Y}\op{Spec}(\mathscr{O}^{h}_{Y,p}) \ar@{->}[r]^{\op{pr}_1} \ar@{->}[d]^{\op{pr}_2} & X \ar@{->}[d]^{\pi}\\ 
\op{Spec}(\mathscr{O}^{h}_{Y,p}) \ar@{->}[r]^{j} & Y}$
\end{center}

As $p$ lifts to a point of $X$, the left arrow has a section. The scheme $\op{Spec}(\mathscr{O}^{h}_{Y,p})$ is the direct limit of all the Nisnevich neighbourhoods of $p$, so there is a Nisnevich neighbourhood $U_p$ of $p$ with a lifting to $X$. By taking the disjoint union over the points of $X$ we obtain the desired Nisnevich local section.
\end{proof}

In particular, the local ring at a point of a scheme in the smooth-Nisnevich topology is still the Henselization of the local ring in the Zariski topology, and in general if we consider a category of algebraic spaces containing all étale maps the topoi induced by the Nisnevich and smooth-Nisnevich topology will be equivalent.

The smooth-Nisnevich topology has the annoying problem that an open subset of a vector bundle may not be a covering when working over finite fields. To solve this, see (\ref{openvec}) below, we introduce the following larger topologies.

\begin{definition}
Let $f: \mathscr{M} \rightarrow \mathscr{N}$ be a representable morphism of algebraic stacks, $p \in \mathscr{N}(K)$. Let $m$ be a non negative integer. Then $f$ is a \emph{m-Nisnevich} (resp. \emph{smooth m-Nisnevich}) neighbourhood of $p$ if it is étale (resp. smooth) and there are finite separable extensions $K_1, \ldots, K_r$ of $K$ with liftings
\begin{center}

$\xymatrixcolsep{5pc}
\xymatrix{\op{Spec}(K_i) \ar@{->}[d]^{\phi_i} \ar@{->}[r]_{p'}& \mathscr{M} \ar@{->}[d]^{f} \\ 
\op{Spec}(K) \ar@{->}[r]^{p} & \mathscr{N}}$

\end{center}

Where $(\left[ K_1: K \right], \ldots , \left[ K_r:K \right], m)=1$.

$f$ is an \emph{m-Nisnevich} (resp. \emph{smooth m-Nisnevich}) cover if for every field $K$ and every $p \in \mathscr{N}(K)$ it is an \emph{m-Nisnevich} (resp. \emph{smooth m-Nisnevich}) neighbourhood of $p$.
\end{definition}

The $m$-Nisnevich topology strictly contains the Nisnevich topology for all $m$. If $m=1$, we get the étale topology, and the $m$-Nisnevich topology contains the $n$-Nisnevich topology if and only if the prime factors of $n$ divide $m$.

Proposition (\ref{spezz}) holds \emph{verbatim} for the $m$-Nisnevich and smooth $m$-Nisnevich topologies, as we can just repeat the argument adding some base changes.

\begin{lemma}\label{openvec}
Let $V \rightarrow \op{Spec}(k)$ be a vector space, and $U \rightarrow V$ a non-empty open subset. Then $U$ is a smooth $m$-Nisnevich cover of $\op{Spec}(k)$ for all $m$, and if $k$ is infinite it is a smooth-Nisnevich cover.
\end{lemma}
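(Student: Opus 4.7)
The structure morphism $f: U \to \op{Spec}(k)$ is representable (both source and target are schemes) and smooth, since $U$ is a nonempty open subscheme of an affine space $V \cong \mathbb{A}^n_k$. The content of the lemma therefore lies entirely in verifying the lifting condition: for each field extension $K/k$, I must either produce a $K$-rational point of $U_K := U \times_k K$ (for the smooth-Nisnevich statement) or produce finite separable extensions $K_1, \ldots, K_r$ of $K$ with $\gcd([K_1:K], \ldots, [K_r:K], m) = 1$ and liftings $\op{Spec}(K_i) \to U$ over $\op{Spec}(k)$ (for the smooth $m$-Nisnevich statement).

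After base change, $U_K$ is a nonempty open subscheme of $V_K \cong \mathbb{A}^n_K$. If $K$ is infinite, then $K$-rational points are Zariski dense in $\mathbb{A}^n_K$, so $U_K(K) \neq \emptyset$ and I can take $r = 1$, $K_1 = K$. Since every extension of an infinite field is infinite, this already settles both the smooth-Nisnevich conclusion when $k$ is infinite and the smooth $m$-Nisnevich conclusion whenever $K$ is infinite.

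The only remaining case is the smooth $m$-Nisnevich condition when $K = \mathbb{F}_q$ is finite, which I plan to handle by a Lang--Weil style point count. Writing $Z = V_K \smallsetminus U_K$ as a proper closed subscheme of $\mathbb{A}^n_K$, one has
\[ |U_K(\mathbb{F}_{q^s})| \geq q^{ns} - |Z(\mathbb{F}_{q^s})| \geq q^{ns} - C q^{(n-1)s} \]
for some constant $C$ depending only on $Z$. Subtracting the at most $\sum_{s' \mid s,\, s' < s} q^{ns'} \leq n q^{ns/2}$ points defined over a proper subfield, I conclude that for every sufficiently large $s$ there is a closed point of $U_K$ with residue field exactly $\mathbb{F}_{q^s}$ (automatically separable over $\mathbb{F}_q$). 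For each prime $\ell \mid m$ I then select such an $s_\ell$ coprime to $\ell$ and set $K_\ell := \mathbb{F}_{q^{s_\ell}}$; since for every prime $\ell \mid m$ some $[K_\ell:K] = s_\ell$ is coprime to $\ell$, the collection $\{K_\ell\}_{\ell \mid m}$ satisfies $\gcd(\{[K_\ell:K]\}_\ell, m) = 1$, as required. The one nontrivial step is this counting estimate over a finite field; the rest is essentially formal.
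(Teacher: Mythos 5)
Your proof is correct and rests on the same basic idea as the paper's: the infinite case follows from density of rational points, and the finite case from producing closed points of suitable coprime degrees inside $U$. The execution of the finite-field step differs: the paper first reduces to $V=\mathbb{A}^1_k$ (a line through $U$), where the complement is a \emph{finite} set of closed points, so one only needs the elementary fact that $\mathbb{F}_q$ has irreducible polynomials of degree $q'^n$ for every prime $q'$ and large $n$; you instead count directly in $\mathbb{A}^n$ via the bound $|Z(\mathbb{F}_{q^s})|\leq Cq^{(n-1)s}$ for the proper closed complement. Your route avoids the (unjustified in the paper) reduction to a line but imports a standard point-counting estimate; it also has the virtue of explicitly base-changing to an arbitrary extension $K/k$, which the covering condition requires and the paper's proof leaves implicit. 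Two small repairs: the bound on points lying in proper subfields should read $\sum_{s'\mid s,\ s'<s}q^{ns'}\leq s\,q^{ns/2}$ (your $n$ should be $s$, or the number of divisors of $s$ --- harmless for the asymptotics); and your selection "one extension for each prime $\ell\mid m$" produces infinitely many fields when $m=0$ (a case the paper does allow and later uses as the smooth $0$-Nisnevich topology), so for $m=0$ you should instead just take two closed points whose degrees are distinct large primes, which forces the gcd of the degrees to be $1$.
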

\begin{proof}
It suffices to prove this this for a $V=\mathbb{A}^1_k$. The statement for $k$ infinite is obvious. Suppose now that $k$ is finite. A closed subset $Z \subsetneq V$ only contains a finite number of closed point, so for any prime $q$ we can always find points $p_q$ with $\left[ k(p_q): k \right] = q^n$ for $n$ large enough, implying the result.
\end{proof}

\begin{proposition}\label{cover}
Let $\mathscr{M}$ be a quasi-separated algebraic stack. There exists a countable family of algebraic spaces $X_n$ with maps $p_n: X_n \rightarrow \mathscr{M}$ of finite type such that the union of these maps is a smooth-Nisnevich cover. 

If $\mathscr{M}$ has affine stabilizer groups at all of its geometric points, and the base field is infinite, we only need a finite number of maps $p_n: X_n \rightarrow \mathscr{M}$.

 In general if $\mathscr{M}$ has affine stabilizers group at all of its geometric points, we only need a finite number of maps to obtain a smooth $m$-Nisnevich cover.

\end{proposition}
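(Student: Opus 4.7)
The plan is to treat the case of a quotient stack $[X/G]$ with $G$ affine first, using the Edidin--Graham construction together with Lemma~\ref{openvec}, and then reduce the other cases to this one.

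Suppose first that $\mathscr{M}=[X/G]$ with $G$ an affine algebraic group. Choose a finite-dimensional representation $V$ of $G$ and a $G$-invariant open $W\subseteq V$ on which $G$ acts freely and whose complement has arbitrarily large codimension. The algebraic space $Y=(X\times W)/G$ fits into a factorization
\[ Y=(X\times W)/G \ \hookrightarrow\ (X\times V)/G \ \longrightarrow\ [X/G],\]
where the first arrow is an open immersion and the second is a vector bundle, so $Y\to[X/G]$ is smooth, representable and of finite type. For any $p\in[X/G](K)$ given by a $G$-torsor $P$ and an equivariant map $P\to X$, the fiber of $Y\to[X/G]$ over $p$ is the open subscheme $P\times^{G}W$ of the $K$-vector bundle $P\times^{G}V$. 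Lemma~\ref{openvec} then supplies a $K$-point of this open subscheme when $k_0$ is infinite, and in general produces finite separable extensions of $K$ with degrees coprime to any prescribed $m$, yielding the smooth $m$-Nisnevich property.

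For an arbitrary $\mathscr{M}$ with affine stabilizers I would invoke a Kresch-type local-quotient theorem to cover $\mathscr{M}$ (after restricting to a quasi-compact exhaustion) by finitely many open substacks of the form $[X_i/G_i]$ with $G_i$ affine, then apply the previous construction to each chart and take the disjoint union. For the fully general quasi-separated case I would write $\mathscr{M}$ as a countable ascending union of quasi-compact open substacks $\mathscr{M}_n$; for each $n$ I would construct a finite-type smooth-Nisnevich cover by an algebraic space $X_n$, building it point by point from a smooth atlas and the residual gerbe at each geometric point. The disjoint union $\coprod X_n\to\mathscr{M}$ would then be the required countable cover.

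The main obstacle lies in the general non-affine-stabilizer case, where the Edidin--Graham trick is unavailable and one has to construct local smooth-Nisnevich neighborhoods by a finer analysis of the residual gerbes. A secondary technicality is showing that a quasi-separated algebraic stack admits a countable exhaustion by quasi-compact open substacks, which is what ultimately forces the family to be countable rather than finite in the general statement.
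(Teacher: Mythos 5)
Your treatment of the quotient-stack case is essentially the paper's: the Edidin--Graham approximation $(X\times W)/G\hookrightarrow (X\times V)/G\to [X/G]$, the identification of the fiber over a $K$-point with a nonempty open subscheme of $\mathbb{A}^n_K$, and the appeal to Lemma~\ref{openvec} to produce the lifting (or liftings of coprime degree) all match. The two reduction steps, however, each contain a genuine gap. First, Kresch's theorem \cite[3.5.9]{Kr99} provides a \emph{stratification} of $\mathscr{M}$ by locally closed substacks that are quotient stacks, not an open cover by such. This matters: if $\mathscr{Z}\subset\mathscr{M}$ is a non-open stratum and $Y\to\mathscr{Z}$ is a smooth cover, the composite $Y\to\mathscr{Z}\hookrightarrow\mathscr{M}$ is not smooth, so taking the disjoint union of chart-covers does not produce a smooth-Nisnevich cover of $\mathscr{M}$. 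The paper avoids this by inverting the logic: it starts from the countable family $\{X_n\to\mathscr{M}\}$ of \cite[6.5]{LMB99}, whose members are already smooth over all of $\mathscr{M}$, and uses the stratification together with the Edidin--Graham approximation only to prove \emph{finiteness}. Concretely, for each stratum $[X/G]$ one pulls the family back along the smooth-Nisnevich cover $U/G\to[X/G]$ by a Noetherian algebraic space; quasi-compactness of $U/G$ shows that finitely many $X_n\times_{[X/G]}U/G$ cover it, and since $\pi\circ p'$ is a smooth-Nisnevich cover so is the restriction of finitely many $X_n$; one then takes the maximum over the finitely many strata.

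Second, the fully general countable statement is not something one constructs ``point by point from residual gerbes'': there are uncountably many points of $\mathscr{M}$ valued in extensions of $k_0$, so that procedure gives no control on cardinality. This statement is exactly \cite[6.5]{LMB99} (after dropping the geometric connectedness condition on fibres), and the paper simply quotes it. You correctly flag both of these as the hard points, but as written neither is resolved, and the gluing-over-strata step would actually fail rather than merely being incomplete.
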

\begin{proof}
The first statement is proven in \cite[6.5]{LMB99}. Note that by dropping the geometrically connectedness requirement on the fibres we can extend the covering family to the whole stack.

Let now $\mathscr{M}$ be an algebraic stack with affine stabilizers groups. In \cite[3.5.9]{Kr99} Kresch proves that under the hypothesis of affine stabilizer groups an algebraic stack admits a stratification by quotient stacks $\left[X /G \right]$, where $X$ is an algebraic space and $G$ a linear algebraic group. We may thus suppose that we are working in these hypothesis. Moreover we may suppose our stack is irreducible.

For a quotient stack we have a standard approximation by an algebraic space. Let $V$ be a representation of $G$ such that there is an open subset $U$ of $V$ on which $G$ acts freely, and $V \setminus U$ has codimension two or more in $V$. Then $\left[ (X\times V)/G \right] \xrightarrow{\pi} \left[X/G \right]$ is a vector bundle, and $\left[ (X\times U )/G \right]$ is an algebraic space. Note that given any open subset $\mathscr{V}$ of $\left[ (X\times V)/G \right]$, the restriction of $\pi$ is a smooth-Nisnevich cover of some open substack $\mathscr{U}$ of $\left[ X/G \right]$. First we reduce to an open subset $\mathscr{U}$ of $\left[ X/G \right]$ such that the fiber of all points of $\mathscr{U}$ is nonempty. We can do that as $\pi$ is universally open. Then the fiber of a point $\op{Spec}(k) \xrightarrow{p} \mathscr{U}$ must be a nonempty open subset of $\mathbb{A}^{n}_{k}$ for some $n$. Then by lemma (\ref{openvec}) we know that $U$ is must be a smooth-Nisnevich neighbourhood of $p$ if $k$ is infinite, and a smooth $m$-Nisnevich neighbourhood of $p$ in general.

Using the fact that the family $\lbrace X_n \rightarrow \mathcal{M} \rbrace_{n \in \mathbb{N}}$ described in \cite[6.5]{LMB99} is functorial we can reduce to the case of a quotient stack $\left[ X / G \right]$. Consider the equivariant approximation map $U/G \rightarrow \left[X/G\right]$ described above. We may restrict to an open subset and suppose that it is surjective and thus respectively a smooth-Nisnevich covering if $k_0$ is infinite or a smooth $m$-Nisnevich covering if not.

Consider the map $\cup_{n} X_n \rightarrow \left[ X / G \right]$. We can take the fiber product with $U/G$ obtaining the following cartesian diagram:

\begin{center}

$\xymatrixcolsep{5pc}
\xymatrix{ \cup_{n} X_n\times_{\left[ X/G \right]} U/G \ar@{->}[r]^{p'} \ar@{->}[d]^{\pi'} & U/G \ar@{->}[d]^{\pi}\\ 
\cup_{n} X_n  \ar@{->}[r]^{p} & \left[ X/G \right]}$

\end{center}

All the arrows in the diagram are smooth-Nisnevich covers (resp. smooth $m$-Nisnevich). Note now that $U/G$ is an algebraic space, and by noetherianity it only takes a finite number of the schemes $X_n\times_{\left[ X/G \right]} U/G$ to cover it. Let $N$ be big enough for this to happen, and consider the new diagram:

\begin{center}

$\xymatrixcolsep{5pc}
\xymatrix{ \cup_{n \leq N} X_n\times_{\left[ X/G \right]} U/G \ar@{->}[r]^{p'} \ar@{->}[d]^{\pi'} & U/G \ar@{->}[d]^{\pi}\\ 
\cup_{n\leq N} X_n  \ar@{->}[r]^{p} & \left[ X/G \right]}$

\end{center}

We know that $\pi \circ p'$ is a smooth Nisnevich cover (resp. smooth $m$-Nisnevich), so $p \circ \pi'$ must be too, and this immediately implies that $p$ is a smooth-Nisnevich cover (resp. smooth $m$-Nisnevich). Then we can use noetherianity to conclude.

\end{proof}

Given an algebraic stack $\mathscr{M}$ we denote $\textit{AlStk}/\mathscr{M}$ the 2-category consisting of representable maps of algebraic stacks $\mathscr{N} \rightarrow \mathscr{M}$ with morphisms given by 2-commutative squares over the identity of $\mathscr{M}$. As we are requiring all maps to be representable, it is equivalent to a 1-category.

We define the (very big) smooth-Nisnevich site $(\textit{AlStk}/\mathscr{M})_{\text{sm-Nis}}$ by allowing all smooth-Nisnevich maps as covers.

We define the (very big) smooth $m$-Nisnevich site $(\textit{AlStk}/\mathscr{M})_{\text{sm m-Nis}}$ by allowing all smooth $m$-Nisnevich maps as covers.

\begin{lemma}\label{sheafNis}
Let $m$ be a number divisible by $p$. Let $K$ be a field and let $\mathcal{U} \rightarrow \op{Spec}(K)$ be an $m$-Nisnevich cover of $\op{Spec}(K)$. Then we have:
\[ \check{H}^0(\mathcal{U},\op{H}^{\bull})=\op{H}^{\bull}(\op{Spec}(K))\]
That is, the functor $\op{H}^{\bull}$ satisfies the sheaf condition with respect to $m$-Nisnevich covers of spectra fo fields.
\end{lemma}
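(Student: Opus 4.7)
The strategy is to identify $\op{H}^{\bull}(K)$ with the equalizer of $\op{pr}_1^*, \op{pr}_2^*: \op{H}^{\bull}(\mathcal{U}) \rightrightarrows \op{H}^{\bull}(\mathcal{U} \times_K \mathcal{U})$ via the pullback $f^*$. The $m$-Nisnevich condition furnishes liftings $g_i: \op{Spec}(K_i) \to \mathcal{U}$ of the structure maps $e_i: \op{Spec}(K_i) \to \op{Spec}(K)$ of finite separable extensions with $\gcd([K_1:K], \ldots, [K_r:K], m) = 1$. Since $p \mid m$, this forces $d := \gcd_i [K_i:K]$ to be coprime to $p$, so by Bezout we can fix integers $a_i$ with $c := \sum_i a_i [K_i:K]$ coprime to $p$, hence invertible on the $p$-torsion groups $\op{H}^{\bull}(-)$.

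For injectivity, apply corestrictions along each $e_i$. Since $g_i^* \circ f^* = e_i^*$ and the standard trace formula gives $(e_i)_* \circ e_i^* = [K_i:K] \cdot \op{id}$ on $\op{H}^{\bull}(K)$, the map $c^{-1} \sum_i a_i (e_i)_* \circ g_i^*$ is a left inverse to $f^*$.

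For the reverse inclusion, let $x \in \op{H}^{\bull}(\mathcal{U})$ lie in the equalizer. Base-change the cover along $e_i$ to obtain the cartesian square with top arrow $g'_i: \mathcal{U}_{K_i} \to \mathcal{U}$ and left column $\pi_i: \mathcal{U}_{K_i} \to \op{Spec}(K_i)$, where $\mathcal{U}_{K_i} := \mathcal{U} \times_K \op{Spec}(K_i)$; the lifting $g_i$ yields a canonical section $\sigma_i$ of $\pi_i$. Using the identification $\mathcal{U}_{K_i} \times_{K_i} \mathcal{U}_{K_i} = (\mathcal{U} \times_K \mathcal{U}) \times_K \op{Spec}(K_i)$, one checks that $(g'_i)^* x$ lies in the equalizer for the cover $\pi_i$. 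But any cover with a section has trivial equalizer: pulling the identity $\op{pr}_1^*(g'_i)^* x = \op{pr}_2^*(g'_i)^* x$ back along $(\sigma_i \circ \pi_i, \op{id}): \mathcal{U}_{K_i} \to \mathcal{U}_{K_i} \times_{K_i} \mathcal{U}_{K_i}$ yields $(g'_i)^* x = \pi_i^*(y_i)$, where $y_i := \sigma_i^* (g'_i)^* x \in \op{H}^{\bull}(K_i)$.

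Now set $y := c^{-1} \sum_i a_i (e_i)_*(y_i) \in \op{H}^{\bull}(K)$. Proper base change applied to the cartesian square gives $f^* \circ (e_i)_* = (g'_i)_* \circ \pi_i^*$, and the trace formula for the finite étale map $g'_i$ of constant degree $[K_i:K]$ gives $(g'_i)_* \circ (g'_i)^* = [K_i:K] \cdot \op{id}$. Chaining these,
\[ f^*(y) \;=\; c^{-1} \sum_i a_i\, (g'_i)_* \pi_i^*(y_i) \;=\; c^{-1} \sum_i a_i\, (g'_i)_*(g'_i)^* x \;=\; c^{-1} \sum_i a_i [K_i:K]\, x \;=\; x. \]
The main technical point requiring care is the compatibility of proper base change with the finite étale trace on $\mu_p^{\otimes \bullet}$-cohomology; both ingredients are classical, but the cleanness of the final cancellation depends on invoking them on the nose.
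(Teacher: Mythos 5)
Your proof is correct, and it runs on the same engine as the paper's: the weighted transfer $\sum_i a_i \op{N}_{K_i/K}$ with Bezout coefficients chosen so that $c=\sum_i a_i[K_i:K]$ is invertible modulo $p$ (this is exactly where the hypothesis $p\mid m$ enters), combined with the fact that the cover acquires a section after base change. The packaging, however, is genuinely different. The paper restricts to a finite subcover, feeds the situation into the \v{C}ech-to-cohomology spectral sequence for $\mathcal{U}\to\op{Spec}(K)$, and kills $\check{H}^j$ for all $j>0$ by exhibiting the transfer as a retraction of the map of \v{C}ech complexes induced by pulling the cover back to itself, where there is a tautological section; the degree-zero identification then drops out of the spectral sequence. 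You instead verify the equalizer condition directly: split injectivity of $f^*$ from the trace formula $(e_i)_*e_i^*=[K_i:K]\cdot\op{id}$, and an explicit preimage $y=c^{-1}\sum_i a_i(e_i)_*(y_i)$ obtained by base-changing the transfer through the cartesian square. Your route is more elementary --- no spectral sequence, and it sidesteps the paper's slightly delicate interposition of the sheafified presheaves $\mathcal{H}^s$ --- at the price of proving only the $\check{H}^0$ statement rather than the full \v{C}ech acyclicity that the spectral-sequence argument delivers. The two inputs you flag as the technical crux are both standard for finite étale morphisms: compatibility of the transfer with arbitrary base change is elementary in that setting (the full proper base change theorem is not needed), and $(g_i')_*(g_i')^*=[K_i:K]\cdot\op{id}$ is the projection formula; these are precisely the ``properties of the norm map'' the paper cites from Rost. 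One cosmetic remark: unlike the paper you never need to shrink $\mathcal{U}$ to a finite subcover, since your argument only ever pushes forward along the finitely many $e_i$ and $g_i'$.
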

\begin{proof}
Fix a $m$-Nisnevich cover $U\rightarrow \op{Spec}(K)$. We can restrict to a finite cover. It is going to be of the form \[ \op{Spec}(K_1) \sqcup \dots \sqcup \op{Spec}(K_r) \rightarrow \op{Spec}(K)\] where $K_1,\ldots K_r $ are finite separable extensions of $K$ and \[ (\left[K_1:K\right],\ldots, \left[K_r:K\right],m) := (d_1,\ldots, d_r,m) =1\]

Recall that given any  scheme $Y$ étale over the spectrum of a field $K'$ there is a transfer map $t:\op{H}^{\bull}(Y ) \rightarrow \op{H}^{\bull}(\op{Spec}(K'))$ given by taking for every point $\op{Spec}(E) \rightarrow Y$ the norm map $\op{N}^{E}_{K'}$. This is described in \cite[sec.1, 1.11]{Ro96}

Fix $a_1, \ldots , a_r$ such that $a_1 d_1 + \ldots + a_r d_r \equiv 1 (\op{mod} m)$. For any scheme $Y$ étale over $\op{Spec}(K)$ we define a transfer map $T:\op{H}^{\bull}(Y \times_K U) \rightarrow \op{H}^{\bull}(Y)$ by taking for each $K_i$ the usual transfer map $t:\op{H}^{\bull}(Y \times_K \op{Spec}(K_i)) \rightarrow \op{H}^{\bull}(Y)$ point by point and multiplying it by $a_i$. Using the properties of the norm map it is immediate to check that $T$ is a retraction for the pullback $\op{H}^{\bull}(Y) \rightarrow \op{H}^{\bull}(Y\times_K U)$.

For a non negative integer $s$, let $\mathcal{H}^i_s$ be the sheafification in the $m$-Nisnevich topology of the presheaf $X\rightarrow H^{i}_{\textit{ét}}(X,\mu_p^{s})$. It agrees with the component of degree $i$ of $\check{H}^0(\mathcal{U},\op{H}^{\bull})$ when $s=i$. For all $s$ there is a \v{C}ech to cohomology spectral sequence \[E^{ij}_2 = \check{H}^i(\mathcal{U},\mathcal{H}^j_s)\Rightarrow H_{\textit{ét}}^{i+j}(\op{Spec}(K),\mu_p^s)\]
coming from the covering $U \rightarrow \op{Spec}(K)$. We can restate our claim as saying that for all $s$ we have $H^s_{\textit{ét}}(\op{Spec}(K),\mu_p^s)=\check{H}^0(\mathcal{U},\mathcal{H}^s_s)$. By the spectral sequence above to do so it suffices to prove that $\check{H}^j(\mathcal{U},\mathcal{H}^r_s)=0$ for all $j >0, r\geq 0, s \geq 0$.

Let $\mathcal{U}'$ be the pullback to $U$ of $\mathcal{U}$. As $\mathcal{U}'$ admits a section, the \v{C}ech cohomology groups $\check{H}^j(\mathcal{U}',\mathcal{H}^r_s)$ are zero for $j > 0$. There is a natural pullback map of \v{C}ech complexes between the complex of $\mathcal{U}$ and $\mathcal{U}'$, and the transfer map $T$ defines a retraction of this map. Then this implies that the \v{C}ech cohomology groups $\check{H}^j(\mathcal{U},\mathcal{H}^r)$ inject to $\check{H}^j(\mathcal{U}',\mathcal{H}^r)$ so they must be zero too for $j>0$, proving our claim.
\end{proof}

\begin{theorem}\label{sheaf}
 The functor of cohomological invariants $\op{Inv}^{\bull}$ is a sheaf in the smooth-Nisnevich topology. Moreover, let $m$ be a non negative integer divisible by $p$. Then $\op{Inv}^{\bull}$ is also a sheaf in the smooth $m$-Nisnevich topology.
\end{theorem}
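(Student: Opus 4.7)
The plan is to verify the sheaf condition directly for a fixed smooth $m$-Nisnevich cover $f \colon \mathscr{M} \to \mathscr{N}$; the plain smooth-Nisnevich statement is recovered by taking $r=1$ and $K_1 = K$ in what follows, which makes the argument strictly simpler. Write $\op{pr}_1, \op{pr}_2 \colon \mathscr{M} \times_{\mathscr{N}} \mathscr{M} \to \mathscr{M}$ and let $\beta \in \op{Inv}^{\bull}(\mathscr{M})$ satisfy $\op{pr}_1^{*}\beta = \op{pr}_2^{*}\beta$. The task is to produce a unique $\alpha \in \op{Inv}^{\bull}(\mathscr{N})$ with $f^{*}\alpha = \beta$; injectivity of $f^{*}$ follows at once from the observation that Lemma \ref{sheafNis} makes the natural map $\op{H}^{\bull}(K) \to \prod_i \op{H}^{\bull}(K_i)$ injective for every $m$-Nisnevich cover by spectra of fields.

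First I would define $\alpha$ at field-valued points. Given $\xi \in \mathscr{N}(K)$, the defining property of a smooth $m$-Nisnevich cover supplies finite separable extensions $K_1, \ldots, K_r$ of $K$ with $(\left[K_1:K\right], \ldots, \left[K_r:K\right], m) = 1$ together with liftings $\xi_i \in \mathscr{M}(K_i)$. Setting $U = \coprod_i \op{Spec}(K_i)$, each pair $(\xi_i, \xi_j)$ determines a $K_i \otimes_K K_j$-point of $\mathscr{M} \times_{\mathscr{N}} \mathscr{M}$, so the hypothesis $\op{pr}_1^{*}\beta = \op{pr}_2^{*}\beta$ asserts that $(\beta(\xi_i))_i$ lies in $\check{H}^{0}(U, \op{H}^{\bull})$. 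Lemma \ref{sheafNis} identifies this \v{C}ech group with $\op{H}^{\bull}(K)$, and I define $\alpha(\xi)$ to be the unique preimage. Independence from the chosen cover is obtained by comparing any two choices through their disjoint union, and naturality along $K \to L$ follows because $U \times_K L$, decomposed into the factor fields of the étale $L$-algebras $L \otimes_K K_i$, is again an $m$-Nisnevich cover of $\op{Spec}(L)$.

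It remains to check the continuity condition (\ref{def1}); the identity $f^{*}\alpha = \beta$ will be immediate from the construction applied at $\mathscr{M}$-points. Let $R$ be a Henselian DVR with residue field $k$ and fraction field $K$, and let $\eta \colon \op{Spec}(R) \to \mathscr{N}$. The base change $\mathscr{M} \times_{\mathscr{N}} \op{Spec}(R) \to \op{Spec}(R)$ is a smooth $m$-Nisnevich cover of a scheme, which by Proposition \ref{spezz} and its $m$-Nisnevich analogue refines to an ordinary $m$-Nisnevich cover by schemes. On the Henselian $\op{Spec}(R)$ this admits a section after passing to a finite unramified extension $R'/R$ of residue degree coprime to $m$, yielding a lifting $\eta' \in \mathscr{M}(R')$. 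The continuity of $\beta$ at $\eta'$ reads $j_{R'}(\beta(\eta'_{k'})) = \beta(\eta'_{K'})$; compatibility of $j$ with field extensions together with the injectivity of $\op{H}^{\bull}(K) \to \op{H}^{\bull}(K')$ (again Lemma \ref{sheafNis}) descend this identity to the required $j_R(\alpha(\eta_k)) = \alpha(\eta_K)$.

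The main obstacle I anticipate is the bookkeeping needed to establish well-definedness and naturality of $\alpha$ in the $m$-Nisnevich case, where field-valued liftings exist only after several finite separable extensions of varying degrees: one must repeatedly invoke Lemma \ref{sheafNis}, together with the transfer retraction underlying its proof, both to compare different covers and to base-change the construction along $K \to L$. The hypothesis $p \mid m$ enters precisely as the assumption under which Lemma \ref{sheafNis} holds, and is therefore the crucial ingredient allowing cocycle data on $\mathscr{M}$ to be glued back to a genuine invariant on $\mathscr{N}$.
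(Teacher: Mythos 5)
Your proposal follows the paper's proof essentially step for step: define the candidate invariant at field-valued points from the liftings (using Lemma \ref{sheafNis} to identify the \v{C}ech $H^0$ with $\op{H}^{\bull}(K)$ in the $m$-Nisnevich case), then verify the continuity condition by producing sections of the pulled-back cover over a Henselian DVR and descending along an injective pullback on cohomology. The one inaccuracy is in your continuity step for the $m$-Nisnevich case: the definition of a smooth $m$-Nisnevich cover does \emph{not} supply a single finite separable extension of residue degree coprime to $m$; it only supplies a finite family $k_1,\ldots,k_r$ of residue field extensions with $(\left[k_1:k\right],\ldots,\left[k_r:k\right],m)=1$, and for $m=6$ these degrees could be $2$ and $3$, neither of which is coprime to $6$. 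So you obtain a family of sections $\eta'_i\in\mathscr{M}(R'_i)$ over the corresponding unramified extensions, and you must descend the identity $j_{R'_i}(\beta((\eta'_i)_{k_i}))=\beta((\eta'_i)_{K_i})$ using injectivity of $\op{H}^{\bull}(K)\rightarrow\bigoplus_i\op{H}^{\bull}(K_i)$ for the whole family --- which is exactly what the transfer retraction in Lemma \ref{sheafNis} provides, and is the observation the paper records at the end of its proof. With that adjustment your argument is complete and coincides with the paper's.
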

\begin{proof}
We begin with the smooth-Nisnevich case. First, notice that as the cohomological invariants of $\op{Spec}(k)$ are equal to its cohomology, if $\alpha$ is a cohomological invariant of $\mathscr{M}$ and $p \in \mathscr{M}(k)$ a point the pullback $p^*(\alpha)$ is the value of $\alpha$ at $p$. 

Now, let $f: \mathscr{M} \rightarrow \mathscr{N}$ be a Nisnevich cover, and $\alpha$ a cohomological invariant of $\mathscr{M}$ satisfying the gluing condition. Let $q$ be a point of $\mathscr{N}$ and $p, p' : \op{Spec}(k) \rightarrow \mathscr{M}$ two different liftings of $q$. By the gluing conditions, \[\alpha(p) = {\op{Pr}_1}^* (\alpha)(p \times_{\mathscr{N}} p') = {\op{Pr}_2}^* (\alpha)(p \times_{\mathscr{N}} p')= \alpha(p').\] We may thus define a candidate invariant $\beta$ by $\beta(q)=\alpha(p)$, where $p$ is any lifting of $q$. 

It is clear that $\beta$ is a natural transformation between the functor of points of $\mathscr{N}$ and $\op{H}^{\bull}(-)$. We need to prove it has property (\ref{def1}).

Let $R$ be a Henselian DVR, $i: \op{Spec}(R) \rightarrow \mathscr{N}$ a morphism. The induced morphism $\op{Pr}_2 : \mathscr{M} \times_{\mathscr{N}} \op{Spec}(R) \rightarrow \op{Spec}(R)$ is a Nisnevich cover of the spectrum of an Henselian ring, so it has a section. This section provides a map $\op{Spec}(R) \rightarrow \mathscr{M}$. By evaluating $\alpha$ at the image of the generic and closed point of $\op{Spec}(R)$, we obtain the desired result.

The general statement follows from the reasoning above and lemma \ref{sheafNis}. For the last part we only need to notice that if $U \rightarrow \op{Spec}(K)$ is an $m$-Nisnevich cover and $R$ is a Henselian $K$-algebra then the induced pullback map \[\op{H}^{\bull}(\op{Spec}(R)) \rightarrow \op{H}^{\bull}(\op{Spec}(R)\times_{K} U)\] is injective.
\end{proof}

We used such a big category to get the strongest statement and also to have a category with the final object $\op{Id}: \mathscr{M} \rightarrow \mathscr{M}$ as a term of comparison. With the next proposition we see that we can reduce our scope to tamer sites.

\begin{definition}\label{sites}
Denote by $\textit{Spc}/\mathscr{M}$ the category of $\mathscr{M}$-algebraic spaces, with morphisms cartesian squares over the identity of $\mathscr{M}$. Denote by $\textit{Sm}/\mathscr{M}$ the full subcategory of $\textit{Spc}/\mathscr{M}$ consisting of algebraic spaces smooth over $k_0$.
On these two categories we consider the Nisnevich sites $(\textit{Spc}/\mathscr{M})_{\text{Nis}}$ and $(\textit{Sm}/\mathscr{M})_{\text{Nis}}$ where the coverings are étale Nisnevich maps, and the smooth-Nisnevich sites $(\textit{Spc}/\mathscr{M})_{\text{sm-Nis}}$ and $(\textit{Spc}/\mathscr{M})_{\text{sm-Nis}}$ where the covers are smooth-Nisnevich maps.

We also define the corresponding $m$-Nisnevich sites $(\textit{Spc}/\mathscr{M})_{\text{m-Nis}}$, $(\textit{Sm}/\mathscr{M})_{\text{m-Nis}}$, $(\textit{Spc}/\mathscr{M})_{\text{sm m-Nis}}$, $(\textit{Spc}/\mathscr{M})_{\text{sm m-Nis}}$
\end{definition}

The site $(\textit{Sm}/\mathscr{M})_{\text{Nis}}$ could be called ``Lisse-Nisnevich" site in analogy with the usual Lisse-étale site on algebraic stacks, and in fact it is a subsite of $\mathscr{M}_{\text{Lis-ét}}$. We have also defined the big sites in analogy with the approach used in \cite[06TI]{St15} as these are the one we are working with in most of the proofs. the next corollary shows that we get the same result regardless which of these sites we choose.

\begin{corollary}\label{eqtopoi}
The $(\textit{AlStk}/\mathscr{M})_{\text{sm-Nis}}$ (resp. $(\textit{AlStk}/\mathscr{M})_{\text{sm m-Nis}}$) site and the sites defined in (\ref{sites}) all induce the same topos.
\end{corollary}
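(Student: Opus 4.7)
The plan is to iterate the comparison lemma for Grothendieck topologies (Stacks Project, tag 03A0): whenever $\mathcal{C}'\subset\mathcal{C}$ is a full subcategory equipped with the induced topology, and every object of $\mathcal{C}$ admits a covering by objects of $\mathcal{C}'$, the inclusion induces an equivalence of topoi. I will apply this three times, once for each of the reductions $\textit{AlStk}\rightsquigarrow\textit{Spc}$, $\text{sm-Nis}\rightsquigarrow\text{Nis}$, and $\textit{Spc}\rightsquigarrow\textit{Sm}$, in each case feeding the $m$-Nisnevich variants through the same argument.

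First I would reduce $(\textit{AlStk}/\mathscr{M})_{\text{sm-Nis}}$ (resp.\ the smooth $m$-Nisnevich variant) to $(\textit{Spc}/\mathscr{M})_{\text{sm-Nis}}$. This is exactly Proposition \ref{cover}: every representable map $\mathscr{N}\to\mathscr{M}$ admits a smooth(-$m$)-Nisnevich cover by a disjoint union of algebraic spaces, so the full subcategory $\textit{Spc}/\mathscr{M}$ verifies the covering hypothesis of the comparison lemma.

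Next I would replace the smooth(-$m$)-Nisnevich topology on $\textit{Spc}/\mathscr{M}$ by the corresponding ordinary (resp.\ $m$-)Nisnevich topology. By Proposition \ref{spezz}, any smooth(-$m$)-Nisnevich cover of an algebraic space admits a (resp.\ $m$-)Nisnevich refinement, hence a presheaf is a sheaf for one topology if and only if it is for the other, giving literally equal categories of sheaves and the identification $(\textit{Spc}/\mathscr{M})_{\text{sm-Nis}}\sim(\textit{Spc}/\mathscr{M})_{\text{Nis}}$ together with its $m$-analogue.

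Finally, for the inclusion $\textit{Sm}/\mathscr{M}\subset\textit{Spc}/\mathscr{M}$, I would re-inspect the construction in Proposition \ref{cover}: the covering algebraic space $(X\times U)/G$ obtained from an equivariant approximation of a quotient stratum $[X/G]$ is smooth over $k_0$ as soon as $X$ is, so (under the standing assumption that $\mathscr{M}$ is smooth) the cover can be taken inside $\textit{Sm}/\mathscr{M}$, and the comparison lemma yields the last equivalence. The only nontrivial point to check at each step is that the topology induced on the subcategory from the ambient site agrees with the one defined directly in Definition \ref{sites}; this is routine because in every case the distinguished covers are characterised by the same intrinsic (smooth, étale, and lifting) properties, which are preserved and reflected by the fully faithful inclusions.
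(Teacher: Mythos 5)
Your proof is correct and takes essentially the same route as the paper's: the paper likewise deduces the corollary from Propositions \ref{cover} and \ref{spezz} together with the chains of inclusions $(\textit{Sm}/\mathscr{M})_{\text{Nis}}\subseteq(\textit{Sm}/\mathscr{M})_{\text{sm-Nis}}\subseteq(\textit{AlStk}/\mathscr{M})_{\text{sm-Nis}}$ and $(\textit{Spc}/\mathscr{M})_{\text{Nis}}\subseteq(\textit{Spc}/\mathscr{M})_{\text{sm-Nis}}\subseteq(\textit{AlStk}/\mathscr{M})_{\text{sm-Nis}}$, which is exactly the comparison-lemma mechanism you spell out. Your extra observation that the covers produced in Proposition \ref{cover} can be taken inside $\textit{Sm}/\mathscr{M}$ when $\mathscr{M}$ is smooth is a point the paper leaves implicit (and, like the paper, your last step tacitly restricts attention to objects smooth over $\mathscr{M}$, since a non-smooth algebraic space admits no smooth-Nisnevich cover by a $k_0$-smooth one).
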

\begin{proof}
This is a consequence of propositions (\ref{cover}) and (\ref{spezz}) and the chains of inclusions

\[(\textit{Sm}/\mathscr{M})_{\text{Nis}}\subseteq (\textit{Sm}/\mathscr{M})_{\text{sm-Nis}} \subseteq (\textit{AlgSt}/\mathscr{M})_{\text{sm-Nis}} \]

\[(\textit{Spc}/\mathscr{M})_{\text{Nis}}\subseteq (\textit{Spc}/\mathscr{M})_{\text{sm-Nis}} \subseteq (\textit{AlgSt}/\mathscr{M})_{\text{sm-Nis}} \]

The same works word by word for the $m$-Nisnevich sites.
\end{proof}

This gives us the tautologic equality $\op{Inv}^{\bull}(\mathscr{M})=H^0((\textit{Sm}/\mathscr{M})_{\text{Nis}},\op{Inv}^{\bull})$. In the next section we will use this equality and the fact that $(\textit{Sm}/\mathscr{M})_{\text{Nis}}$ is a site of smooth algebraic spaces to obtain a satisfactory description of the sheaf $\op{Inv}^{\bull}$.

\section{$\op{Inv}^{\bull}$ as a derived functor}

In this section we give an explicit description of the sheaf of cohomological invariants as the sheafification of the étale cohomology with respect to the smooth-Nisnevich site. This will immediately give us a clear idea on how our invariants should be computed and their properties.

To keep the statements short, we will work on the ordinary Nisnevich and smooth-Nisnevich sites. We can do this without loss of generality as the results for the $m$-Nisnevich sites will be obtained for free from the ordinary case.

\begin{definition}\label{regulinv}
Let $\mathscr{M}$ be an algebraic stack, and let $i: (\textit{Sm}/\mathscr{M})_{\text{Nis}}\rightarrow (\textit{Sm}/\mathscr{M})_{\text{ét}}$ the inclusion of $(\textit{Sm}/\mathscr{M})_{\text{Nis}}$ in the Lisse-étale site of $\mathscr{M}$. It induces a left-exact functor $i_*$ from the Lisse-étale topos of $\mathscr{M}$ to the topos of $(\textit{Sm}/\mathscr{M})_{\text{Nis}}$.

 We will call $\op{RInv}^{\bull} := \oplus_j \op{R}^{j} i_{*} (\mu_p^{\otimes j})$ the sheaf of \emph{regular invariants}.
\end{definition}

We can see the sheaf of regular invariants as the sheafification of the presheaf $U \rightarrow \op{H}^{\bull}(U)$ in any of the sites defined in the previous section.

\begin{remark}
If $R$ is an Henselian ring then $\op{RInv}^{\bull}(\op{Spec}(R))$ is naturally isomorphic to $ \op{H}^{\bull}(\op{Spec}(R))$.
\end{remark}

The map from étale cohomology to cohomological invariants naturally extends to a map of sheaves between regular invariants and cohomological invariants. The previous remark shows that this map can be again interpreted as sending an element $\alpha \in \op{RInv}^{\bull}(\mathscr{M})$ to the cohomological invariant $\tilde{\alpha} \in \op{Inv}^{\bull}(\mathscr{M})$ defined by sending a point $p \in \mathscr{M}(K)$ to $\tilde{\alpha}(p)=p^*(\alpha)$.

\begin{proposition}
The map $\tilde{*} : \op{RInv}^{\bull} \rightarrow \op{Inv}^{\bull}$ is injective.
\end{proposition}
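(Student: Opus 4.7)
The plan is to verify injectivity stalk-locally in the Nisnevich topology on $\textit{Sm}/\mathscr{M}$. Both $\op{RInv}^{\bull}$ and $\op{Inv}^{\bull}$ are sheaves there — the former by its construction as higher direct images of $\mu_p^{\otimes j}$ from the Lisse-étale site, the latter by Theorem \ref{sheaf} combined with Corollary \ref{eqtopoi}. It therefore suffices to show that, for every smooth $\mathscr{M}$-algebraic space $U$ and every scheme-theoretic point $x \in U$, the induced map on stalks at $x$ is injective.

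First I would compute the stalk of $\op{RInv}^{\bull}$ at $x$. By the remark just before the proposition, $\op{RInv}^{\bull}(\op{Spec}(R)) = \op{H}^{\bull}(\op{Spec}(R))$ for every Henselian $R$, so the Nisnevich stalk of $\op{RInv}^{\bull}$ at $x$ is $\op{H}^{\bull}(\op{Spec}\mathscr{O}^{h}_{U,x})$. By Gabber's theorem (Lemma \ref{coomhens}) this identifies canonically with $\op{H}^{\bull}(k(x))$ via pullback along the closed immersion, and functoriality of pullback shows that under this identification the stalk map sends $\alpha \in \op{RInv}^{\bull}(U)$ to $p_x^{*}(\alpha)$, where $p_x : \op{Spec}(k(x)) \to U$ is the natural $k(x)$-point of $U$.

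To close the argument, suppose $\alpha$ lies in the kernel of $\tilde{*}$, so that by definition $\tilde\alpha(q) = q^{*}(\alpha) = 0$ for every $K$-point $q$ of $U$. Specializing to $q = p_x$ shows that $\alpha$ has zero image in every Nisnevich stalk of $\op{RInv}^{\bull}$. Consequently there is a Nisnevich cover of $U$ on which $\alpha$ restricts to zero, and the sheaf property forces $\alpha = 0$ in $\op{RInv}^{\bull}(U)$.

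I do not anticipate any substantive obstacle: the argument is essentially a two-step unwinding of definitions, leveraging (a) Theorem \ref{sheaf}, which lets us verify the claim stalk-wise, and (b) Lemma \ref{coomhens}, which reduces the stalk of $\op{RInv}^{\bull}$ at $x$ to the cohomology of $k(x)$ in exactly the way needed to recognize the stalk map as evaluation of $\tilde\alpha$ at $p_x$.
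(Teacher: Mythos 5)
Your argument is correct and is essentially the paper's own proof: both identify the Nisnevich stalk of $\op{RInv}^{\bull}$ at a point with the cohomology of the Henselian local ring, use Gabber's theorem (Lemma \ref{coomhens}) to see that this equals the value of the associated cohomological invariant at the residue field, and conclude from the Nisnevich sheaf property of $\op{RInv}^{\bull}$. Your write-up is just a slightly more explicit, stalk-by-stalk phrasing of the same reasoning.
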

\begin{proof}
Suppose a given regular invariant $\alpha$ is zero as a cohomological invariant. By lemma (\ref{coomhens}), the pullback of a regular invariant to the spectrum of an Henselian local ring is the same as the pullback to its closed point. The fact that $\alpha$ is zero as a cohomological invariant then implies that the pullback of $\alpha$ to the spectrum of any local Henselian ring is zero, as it is zero at its closed point. Then $\alpha$ must be zero, as regular invariants form a sheaf in the Nisnevich topology.
\end{proof}

This shows that we can think of $\op{RInv}^{\bull}$ as a subsheaf of $\op{Inv}^{\bull}$. We want to prove the following:

\begin{theorem}\label{princ}
Let $\mathscr{M}$ be an algebraic stack smooth over $k_0$. Then $\op{RInv}^{\bull}(\mathscr{M})=\op{Inv}(\mathscr{M})$.
\end{theorem}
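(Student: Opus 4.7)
The plan is to promote the injection $\op{RInv}^{\bull}\hookrightarrow\op{Inv}^{\bull}$ of Nisnevich sheaves on $(\textit{Sm}/\mathscr{M})_{\text{Nis}}$ to a bijection; since Corollary~\ref{eqtopoi} identifies the relevant topoi and both functors are already Nisnevich sheaves, this reduces to surjectivity on stalks. At a point $x$ of a smooth algebraic space $U\to\mathscr{M}$, setting $R=\mathscr{O}^{h}_{U,x}$, Gabber (Lemma~\ref{coomhens}) gives $(\op{RInv}^{\bull})_{x}=\op{H}^{\bull}(R)\cong\op{H}^{\bull}(\kappa(x))$ while $(\op{Inv}^{\bull})_{x}=\op{Inv}^{\bull}(\op{Spec}(R))$, so the task becomes: for every smooth Henselian local $R$ over $k_{0}$ with residue field $k$, every $\alpha\in\op{Inv}^{\bull}(\op{Spec}(R))$ equals the regular invariant associated to $\alpha(\text{closed pt})\in\op{H}^{\bull}(k)\cong\op{H}^{\bull}(R)$. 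Equivalently, setting $\beta=\alpha-\widetilde{\alpha(\text{closed pt})}$, a cohomological invariant of $\op{Spec}(R)$ that vanishes at the closed point must be identically zero.

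By naturality $\beta$ is determined by the values $\beta(\xi_{\mathfrak{p}})\in\op{H}^{\bull}(\kappa(\mathfrak{p}))$ as $\mathfrak{p}$ ranges over the primes of $R$, so I would argue $\beta(\xi_{\mathfrak{p}})=0$ by descending induction on $\op{ht}(\mathfrak{p})$, the base case $\mathfrak{p}=\mathfrak{m}$ being given. For a prime $\mathfrak{p}$ of height $r<\dim R$, pick a prime $\mathfrak{q}\supsetneq\mathfrak{p}$ of height $r+1$ (which exists since $R$ is regular, hence catenary). The ring $R/\mathfrak{p}$ is excellent Henselian local, so its normalization $\widetilde{R/\mathfrak{p}}$ is finite, and being a domain finite over a Henselian local ring it is itself Henselian local. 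Choose a height-one prime $\tilde{\mathfrak{q}}$ of $\widetilde{R/\mathfrak{p}}$ lying over $\mathfrak{q}/\mathfrak{p}$ and set $D=(\widetilde{R/\mathfrak{p}})_{\tilde{\mathfrak{q}}}$, a DVR with fraction field $\kappa(\mathfrak{p})$ and residue field $\kappa(\tilde{\mathfrak{q}})$, a finite extension of $\kappa(\mathfrak{q})$. Let $H=D^{h}$ be its Henselization. The composite $\op{Spec}(H)\to\op{Spec}(R)$ sends the closed point to $\xi_{\mathfrak{q}}$ and the generic point to $\xi_{\mathfrak{p}}$; by the inductive hypothesis $\beta(\xi_{\mathfrak{q}})=0$, so naturality forces $\beta$ to vanish at the closed point of $H$, and continuity applied to $H$ yields $\beta(\op{Frac}(H))=j_{H}(0)=0$.

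The main technical obstacle is the last step: concluding $\beta(\xi_{\mathfrak{p}})=0$ from the vanishing of its pullback along $\kappa(\mathfrak{p})=\op{Frac}(D)\hookrightarrow\op{Frac}(D^{h})=\op{Frac}(H)$. This hinges on injectivity of the pullback $\op{H}^{\bull}(\kappa(\mathfrak{p}))\to\op{H}^{\bull}(\op{Frac}(H))$, which I would prove by a diagram chase using the Gersten exact sequence $0\to\op{H}^{\bull}(D)\to\op{H}^{\bull}(\op{Frac}(D))\xrightarrow{\partial}\op{H}^{\bull-1}(\kappa(\tilde{\mathfrak{q}}))$ and its counterpart for $D^{h}$, Gabber's identification $\op{H}^{\bull}(D)\cong\op{H}^{\bull}(D^{h})$, and the compatibility of residues with Henselization recorded in Remark~\ref{unramif}: if $x\in\op{H}^{\bull}(\op{Frac}(D))$ has vanishing pullback then $\partial(x)=0$, so $x$ lifts to some $y\in\op{H}^{\bull}(D)$, but under Gabber $y$ corresponds to an element of $\op{H}^{\bull}(D^{h})$ whose image in $\op{H}^{\bull}(\op{Frac}(D^{h}))$ is zero; the injectivity $\op{H}^{\bull}(D^{h})\hookrightarrow\op{H}^{\bull}(\op{Frac}(D^{h}))$ (again from Gersten) forces $y=0$, hence $x=0$. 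Running the induction from height $\dim R$ down to height $0$ gives $\beta=0$, completing the proof.
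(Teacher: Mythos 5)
Your reduction to a stalkwise statement is reasonable, and the target you isolate (for $R=\mathscr{O}^{h}_{U,x}$ smooth Henselian local, an invariant of $\op{Spec}(R)$ vanishing at the closed point vanishes identically) is indeed equivalent to what must be proved. But the inductive step contains a genuine error. You invoke ``Gabber's identification $\op{H}^{\bull}(D)\cong\op{H}^{\bull}(D^{h})$''; Gabber's theorem (Lemma \ref{coomhens}) identifies the cohomology of the \emph{Henselian} ring $D^{h}$ with that of its residue field --- it says nothing about the non-Henselian $D$, and $\op{H}^{\bull}(D)\to\op{H}^{\bull}(D^{h})$ is far from an isomorphism. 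Worse, the injectivity of $\op{H}^{\bull}(\op{Frac}(D))\to\op{H}^{\bull}(\op{Frac}(D^{h}))$ that your diagram chase is meant to establish is simply false: take $D=k[t]_{(t)}$, so $\op{Frac}(D)=k(t)$. The unit $1+t$ is not a $p$-th power in $k(t)$, so its class in $\op{H}^{1}(k(t))=k(t)^{*}/(k(t)^{*})^{p}$ is nonzero, yet by Hensel's lemma $1+t$ acquires a $p$-th root in $D^{h}$ and the class dies in $\op{H}^{1}(\op{Frac}(D^{h}))$. Consequently the step ``$\beta(\op{Frac}(H))=0\Rightarrow\beta(\xi_{\mathfrak{p}})=0$'' fails for every prime of height at most $\dim R-2$; only the first step of your descent, where $\mathfrak{q}=\mathfrak{m}$ and $D$ is already Henselian, goes through. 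This is not a presentational issue: your descending induction runs in the generization direction, exactly where the relevant restriction maps fail to be injective.

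The paper runs the induction the other way. Lemma \ref{Inj} shows, by ascending induction on dimension using a regular parameter $x$ and the Henselization of $R_{(x)}$, that vanishing at the \emph{generic} point forces vanishing at the closed point; the injectivity needed there is $\op{H}^{\bull}(R^{h})\hookrightarrow\op{H}^{\bull}(\op{Frac}(R^{h}))$ for a Henselian DVR, which does hold \cite[7.7]{GMS03}. This gives injectivity of evaluation at the generic point (Corollary \ref{pgen}); surjectivity onto the regular invariants is then obtained not by a second induction but by combining Remark \ref{unramif} (the value at the generic point is unramified) with the Bloch--Ogus identification of $\op{H}_{nr}^{\bull}(X)$ with the Zariski sheafification of $\op{H}^{\bull}$, so that the composite $\mathscr{H}^{\bull}(X)\to\op{Inv}^{\bull}(X)\to\op{H}_{nr}^{\bull}(X)$ is an isomorphism and all three maps are forced to be isomorphisms. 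To salvage your argument you should replace the descending induction by this route. (Separately, your identification of the stalk $(\op{Inv}^{\bull})_{x}$ with $\op{Inv}^{\bull}(\op{Spec}(R))$ would need a commutation-with-filtered-limits argument that is not automatic, but this is minor compared to the main gap.)
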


We will use a few lemmas. First we prove that for a smooth connected space a cohomological invariant is determined by its value at the generic point.

\begin{lemma}\label{Inj}
Let $R$ be a regular Henselian local $k_0$-algebra, with residue field $k$ and quotient field $K$. Let $\alpha$ be a cohomological invariant of $\op{Spec}(R)$. Then if $\alpha(\op{Spec}(K))=0$ we have $\alpha(\op{Spec}(k))=0$.
\end{lemma}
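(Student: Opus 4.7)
The plan is to induct on $\dim R$, peeling off one codimension at a time using the continuity condition along Henselian DVRs inside $R$. The base case $\dim R = 0$ is vacuous, since then $R = k$. The whole argument rests on the dimension-one case, which I would handle first: when $R$ is itself a Henselian DVR, applying the continuity condition (\ref{def1}) to the identity $\op{Spec}(R) \to \op{Spec}(R)$ gives $\alpha(\op{Spec}(K)) = j_R(\alpha(\op{Spec}(k)))$, so the vanishing reduces to injectivity of $j_R$. I would deduce this from absolute purity: the localization sequence for the closed immersion $\op{Spec}(k) \hookrightarrow \op{Spec}(R)$, together with Lemma \ref{coomhens}, yields the standard short exact sequence
\[
0 \to \op{H}^{\bull}(k) \xrightarrow{j_R} \op{H}^{\bull}(K) \xrightarrow{\partial_v} \op{H}^{\bull-1}(k) \to 0,
\]
in which $j_R$ is manifestly injective.

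For the inductive step ($\dim R = n \geq 2$), I would pick a regular parameter $t \in \mathfrak{m}_R$, so that $\mathfrak{p} = (t)$ is a height-one prime, $R/(t)$ is a regular Henselian local ring of dimension $n-1$ with residue field $k$ and fraction field $K_1 := \op{Frac}(R/(t))$, and the Henselization $R^h_{\mathfrak{p}}$ of $R_{\mathfrak{p}}$ is a Henselian DVR with residue field $K_1$ and fraction field some $\tilde K$ extending $K$. The canonical morphism $\op{Spec}(R^h_{\mathfrak{p}}) \to \op{Spec}(R)$ sends the generic point to the generic point of $\op{Spec}(R)$, so $\alpha(\op{Spec}(\tilde K))$ is the pullback of $\alpha(\op{Spec}(K)) = 0$. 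Applying the continuity of $\alpha$ along this morphism and then the dimension-one case forces $\alpha(\op{Spec}(K_1)) = 0$. Finally, pulling back $\alpha$ along $\op{Spec}(R/(t)) \to \op{Spec}(R)$ produces a cohomological invariant on the regular Henselian local ring $R/(t)$ (continuity is obviously preserved under pullback) whose generic value vanishes, and the inductive hypothesis delivers $\alpha(\op{Spec}(k)) = 0$.

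The main obstacle is really just the injectivity of $j_R$ in the Henselian DVR case; this is the only ingredient beyond formal manipulation with the continuity condition. Everything else amounts to standard facts — a quotient of a Henselian local ring by a regular parameter is still Henselian and regular, the Henselization of the localization of a regular local ring at a height-one prime is a Henselian DVR, and pullback of cohomological invariants preserves continuity — so I do not anticipate any other serious difficulty.
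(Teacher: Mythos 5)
Your proof is correct and follows essentially the same route as the paper's: the same induction on dimension, peeling off a regular parameter $t$, passing to the Henselization of $R_{(t)}$ (the paper's $R_2^h$), which is a Henselian DVR with residue field $\op{Frac}(R/(t))$, and then invoking the one-dimensional case before applying the inductive hypothesis to $R/(t)$. The only cosmetic difference is that the paper simply cites \cite[7.7]{GMS03} for the base case (injectivity of $j$ for a Henselian DVR), whereas you rederive that injectivity from the purity exact sequence.
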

\begin{proof}
We will proceed by induction on the dimension $d$ of $R$. The case $d=0$ is trivial, and the case $d=1$ is proven in \cite[7.7]{GMS03}. Suppose now $d>1$.

Let now $x , \ldots , x_{d-1}$ be a regular sequence for $R$. Set $R_1=\faktor{R}{(x)}$. $R_1$ is Henselian, and by the inductive hypothesis we know that if the value of $\alpha$ at  $\op{Spec}(k(R_1))$ is zero then the value of $\alpha$ at $\op{Spec}(k)$ must be zero too.
 
  Consider $R_2 := R_{(x)}$. The residue field of $R_2$ is $k(R_1)$, and its quotient field is $k(R)$.
 
  Let $R_2^{h}$ be the Henselization of $R_2$, and consider the pullback $\alpha'$ of $\alpha$ through the map $\op{Spec}(R_2^h) \rightarrow \op{Spec}(R)$. We have $\alpha'(\op{Spec}(k(R_1))= \alpha(\op{Spec}(k(R_1)))$, and $\alpha(\op{Spec}(K))=0$ implies the same for the generic point of $\op{Spec}(R_2^h)$. Then we have $\alpha(\op{Spec}(k(R)))=0$, which implies $\alpha(\op{Spec}(k(R_1)))=0$ which in turn implies $\alpha(\op{Spec}(k))=0$.
\end{proof}

\begin{example}
This fails as soon as $X$ is no longer normal. Let $R = \lbrace \phi \in \mathbb{C}\left[\left[ t \right]\right] \mid \phi(0) \in \mathbb{R} \rbrace$. $R$ is an Henselian ring of dimension one, with residue field $\mathbb{R}$ and quotient field $\mathbb{C}\left[\left[ t \right]\right]$, but $\op{H}^{1}(\op{Spec}(\mathbb{R}), \mathbb{F}_{2}) \neq 0$, while $\op{H}^{1}(\op{Spec}(\mathbb{C}\left[\left[ t \right]\right]),\mathbb{F}_2) = 0 $.
\end{example}

\begin{corollary}\label{pgen}
Let $X$ be an irreducible scheme smooth over $k_0$. A cohomological invariant $\alpha$ of $X$ is zero if and only if its value at the generic point of $X$ is zero.
\end{corollary}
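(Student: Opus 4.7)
The ``only if'' direction is tautological, so the content lies in the converse: assuming $\alpha$ vanishes at $\xi_X$, I want to deduce that $\alpha(\op{Spec}(K)\to X)=0$ for every $K$-valued point. By naturality, any such point factors as $\op{Spec}(K)\to\op{Spec}(k(p))\to X$, where $p$ is its scheme-theoretic image, and $\alpha(\op{Spec}(K)\to X)$ is the pullback of $\alpha(p)\in \op{H}^{\bull}(k(p))$ along $k(p)\to K$. The claim therefore reduces to showing $\alpha(p)=0$ for every point $p\in X$.

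Fix such a $p$ and set $R=\mathcal{O}^h_{X,p}$, the Henselization of the local ring of $X$ at $p$. Since $X$ is smooth over $k_0$, the local ring $\mathcal{O}_{X,p}$ is regular, and regularity is preserved by Henselization, so $R$ is a regular Henselian local $k_0$-algebra; in particular $R$ is an integral domain. Let $\alpha'$ be the pullback of $\alpha$ along $\op{Spec}(R)\to X$; the continuity condition is clearly preserved under pullback, so $\alpha'\in \op{Inv}^{\bull}(\op{Spec}(R))$. The map $\mathcal{O}_{X,p}\to R$ is flat and injective, so passing to fraction fields gives an inclusion $K(X)=\op{Frac}(\mathcal{O}_{X,p})\hookrightarrow \op{Frac}(R)=:K^h$; by functoriality, $\alpha'(\op{Spec}(K^h))$ is the image of $\alpha(\xi_X)=0$ under the pullback $\op{H}^{\bull}(K(X))\to \op{H}^{\bull}(K^h)$ and hence vanishes.

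With these ingredients in place, Lemma~\ref{Inj} applied to the regular Henselian local ring $R$ forces $\alpha'$ to vanish at the closed point of $\op{Spec}(R)$, which is precisely $\alpha(p)=0$ by naturality. The one step that is not purely formal is the verification that $R=\mathcal{O}^h_{X,p}$ is a regular local domain, so that Lemma~\ref{Inj} genuinely applies; this is the core input and relies on smoothness of $X$ over $k_0$ together with the facts that Henselization preserves regularity and that regular local rings are integral domains. Everything else is routine manipulation of pullbacks and generic points.
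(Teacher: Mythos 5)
Your argument is correct and is essentially the paper's own proof: the paper likewise passes to the local ring of $p$ in the smooth-Nisnevich topology (i.e.\ the Henselization $\mathcal{O}^h_{X,p}$), observes that its generic point factors through $\xi_X$ so the pulled-back invariant vanishes there, and then invokes Lemma~\ref{Inj}. Your write-up merely makes explicit the routine verifications (that the $K$-points reduce to scheme points, and that $\mathcal{O}^h_{X,p}$ is a regular Henselian local domain) which the paper leaves implicit.
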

\begin{proof}
Let $\alpha$ be a cohomological invariant of $X$ such that its restriction at the generic point $\mu$ is zero. Le $p$ be another point, and let $R$ be the local ring of $p$ in the smooth-Nisnevich topology, $\mu_1$ the its generic point. As $\mu_1$ is obtained by base change from $\mu$, $\alpha(\mu_1)$ must be zero. Then, by the previous lemma, $\alpha(p)$ is zero.
\end{proof}

The same happens for regular invariants.

\begin{lemma}
Let $X$ be a scheme smooth over $k_0$. Let $\mathscr{H}^{\bull}$ be the sheafification of the étale cohomology in the Zariski topology. There is a natural isomorphism of Zariski sheaves $\mathscr{H}^{\bull} \simeq \op{H}_{nr}^{\bull}$ given by restriction to the generic point.
\end{lemma}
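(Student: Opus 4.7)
The plan is to define the natural map via restriction to the generic point, then verify it is a sheaf isomorphism by checking injectivity and surjectivity; the surjectivity step is the main obstacle. We may assume $X$ is irreducible with generic point $\eta$, handling the finitely many connected components separately.

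First I construct the map. For a Zariski open $U \subseteq X$, a section $\alpha \in \mathscr{H}^{\bull}(U)$ is locally represented by classes $\beta \in \op{H}^{\bull}(V)$ for $V \subseteq U$, and restriction to $\eta$ gives a well-defined element of $\op{H}^{\bull}(k(X))$. To see that the image lies in $\op{H}^{\bull}_{nr}(U)$, fix a codimension-one point $y \in U$; after shrinking to a neighborhood containing $y$, the local representative pulls back to $\op{H}^{\bull}(\mathscr{O}^h_{X,y}) \cong \op{H}^{\bull}(k(y))$ by Lemma \ref{coomhens}, and the residue $\partial_y$ of the image in $\op{H}^{\bull}(k(X))$ then vanishes because $\partial_y \circ j = 0$ as recalled in Remark \ref{unramif}.

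For injectivity, I identify $\mathscr{H}^{\bull}$ with the sheaf of regular invariants $\op{RInv}^{\bull}$ and use the injection $\tilde{*} \colon \op{RInv}^{\bull} \hookrightarrow \op{Inv}^{\bull}$ proven just above: a section $\alpha$ whose restriction to $\eta$ is zero produces a cohomological invariant on $U$ with vanishing value at the generic point, which by Corollary \ref{pgen} forces the invariant (hence $\alpha$) to vanish.

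The main obstacle is surjectivity: given an unramified class $\alpha \in \op{H}^{\bull}_{nr}(U) \subseteq \op{H}^{\bull}(k(X))$, one must produce étale cohomology classes on a Zariski cover of $U$ restricting to $\alpha$ at $\eta$. This is precisely the content of the Bloch--Ogus theorem \cite[4.2.2]{BO74}: for smooth schemes over a field, the Gersten conjecture holds, so the arithmetic resolution
\[ 0 \to \mathscr{H}^n \to (i_\eta)_{*} \op{H}^n(k(X)) \to \bigoplus_{y \in X^{(1)}} (i_y)_{*} \op{H}^{n-1}(k(y)) \to \cdots \]
is an exact complex of Zariski sheaves on $X$. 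Taking global sections over $U$ identifies $\mathscr{H}^n(U)$ with the kernel of the total residue map, which is $\op{H}^n_{nr}(U)$ by definition. Once this input is cited, the natural map of sheaves is thus an isomorphism section-wise.
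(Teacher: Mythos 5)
Your proof is correct and rests on exactly the same key input as the paper, whose entire argument is the citation of the Bloch--Ogus arithmetic (Gersten) resolution \cite[4.2.2]{BO74}; once that exact sequence of Zariski sheaves is invoked, taking sections over $U$ gives the identification $\mathscr{H}^n(U)\simeq \op{H}^n_{nr}(U)$ directly. Note that the same exact sequence already gives injectivity of $\mathscr{H}^n \to (i_\eta)_*\op{H}^n(k(X))$, so your separate injectivity step --- which silently identifies the Zariski sheafification $\mathscr{H}^{\bull}$ with the Nisnevich sheaf $\op{RInv}^{\bull}$, a fact not yet available at this point and essentially part of what is being proved --- is unnecessary and best dropped.
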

\begin{proof}
This is proven by the Gersten resolution \cite[4.2.2]{BO74}.
\end{proof}

By remark $\ref{unramif}$ we know that the value of a cohomological invariant $\alpha$ at the generic point of a smooth space $X$ belongs to the unramified cohomology $H_{nr}(X)$. We only have to put together the previous lemmas to obtain the result for schemes.

\begin{proposition}\label{princschemi}
Let $X$ be a scheme smooth over $k_0$. There is a natural isomorphism $\op{Inv}^{\bull}(X) \simeq \op{H}_{nr}^{\bull}(X)$. In particular, all invariants of $X$ are regular.
\end{proposition}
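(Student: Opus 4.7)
The plan is to produce a restriction-to-generic-point map $\rho : \op{Inv}^{\bull}(X) \to \op{H}_{nr}^{\bull}(X)$ and show it is an isomorphism, with inverse coming from the already-established inclusion $\op{RInv}^{\bull} \hookrightarrow \op{Inv}^{\bull}$.

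First, for any $\alpha \in \op{Inv}^{\bull}(X)$ the value $\alpha(\xi_X)$ lies in $\op{H}_{nr}^{\bull}(X)$ by Remark~\ref{unramif}, which applies since $X$ is smooth and hence normal; this defines $\rho$. Its injectivity is immediate from Corollary~\ref{pgen}: an invariant whose restriction to $\xi_X$ vanishes must be identically zero.

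For surjectivity I would invoke the preceding lemma, which identifies the Zariski sheaf $\mathscr{H}^{\bull}$ with $\op{H}_{nr}^{\bull}$ on smooth $X$, so that $\op{H}_{nr}^{\bull}(X) = H^0(X, \mathscr{H}^{\bull})$. The same identification carries over to the smooth-Nisnevich sheafification, since the Gersten resolution of the étale cohomology presheaf is exact on smooth schemes in both topologies; this yields a canonical isomorphism $\op{H}_{nr}^{\bull}(X) \cong \op{RInv}^{\bull}(X)$. Post-composing with the inclusion $\tilde{*}: \op{RInv}^{\bull}(X) \hookrightarrow \op{Inv}^{\bull}(X)$ from the previous section produces a map $s : \op{H}_{nr}^{\bull}(X) \to \op{Inv}^{\bull}(X)$. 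For $\beta \in \op{H}_{nr}^{\bull}(X)$, the invariant $s(\beta)$ evaluates on $\xi_X$ by pullback, and this pullback is precisely the restriction to the generic point, returning $\beta$ by construction of the isomorphism $\mathscr{H}^{\bull} \simeq \op{H}_{nr}^{\bull}$. Hence $\rho \circ s = \op{id}$, so $\rho$ is surjective.

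The main technical subtlety is the compatibility between the Zariski and smooth-Nisnevich sheafifications of the étale cohomology presheaf on smooth schemes, which relies on the Gersten-type machinery already feeding into the preceding lemma. Once $\rho$ is shown to be a bijection, the second clause of the proposition is automatic: every $\alpha \in \op{Inv}^{\bull}(X)$ equals $s(\rho(\alpha))$, and $s$ factors through $\op{RInv}^{\bull}(X)$, so $\alpha$ is regular.
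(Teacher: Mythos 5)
Your proof is correct and follows essentially the same route as the paper: evaluation at the generic point is injective by Corollary~\ref{pgen} and surjective because it is split by the Bloch--Ogus identification $\mathscr{H}^{\bull}\simeq \op{H}^{\bull}_{nr}$ composed with $\tilde{*}$, which is exactly the paper's identity $\op{res}_2\circ\tilde{*}=\op{res}_1$ among its three maps. The only real difference is that you justify $\op{RInv}^{\bull}(X)\cong\op{H}^{\bull}_{nr}(X)$ by invoking Gersten exactness in the Nisnevich topology (a true but heavier input), whereas the paper gets this for free from the observations that $\mathscr{H}^{\bull}\simeq\op{Inv}^{\bull}$ on smooth schemes and that $\op{Inv}^{\bull}$ is already a Nisnevich sheaf.
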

\begin{proof}
We will prove the proposition for an irreducible smooth scheme. The general statement follows.
Consider these three morphisms: 
\begin{itemize}
\item The map $\tilde{*} : \mathscr{H}(X) \rightarrow \op{Inv}^{\bull}(X)$ given by restricting to points.
\item The map $\op{res}_1 : \mathscr{H}(X) \rightarrow \op{H}_{nr}^{\bull}(X)$ given by restricting to the generic point.
\item The map $\op{res}_2 : \op{Inv}(X) \rightarrow \op{H}_{nr}^{\bull}(X)$ given by evaluating at the generic point.
\end{itemize}
The second map is an isomorphism by the previous lemma, and the third map is injective by corollary \ref{pgen}. As clearly $ \op{res}_2 \circ \, \tilde{*} = \op{res}_1 $, the three maps must all be isomorphisms. 

As $\op{RInv}^{\bull}$ is the Nisnevich sheafification of $U \rightarrow \mathscr{H}(U)$ the result follows.
\end{proof}

\begin{remark}
Proposition \ref{princschemi} implies that given a regular Henselian ring $R$, with closed and generic points $p$ and $P$ respectively, the equation $\alpha(P)=j(\alpha(P))$, as in (\ref{def1}), holds for any cohomological invariant $\alpha$ of $\op{Spec}(R)$. This shows that in the definition of cohomological invariant we could equivalently choose to require the (apparently) stronger property that equation (\ref{def1}) held for all regular Henselian rings, rather than just for DVRs.
\end{remark}

\begin{proof}[Proof of Theorem \ref{princ}]
We can just plug the previous results in the tautological equality $\op{Inv}^{\bull}(\mathscr{M})=H^0((\textit{Sm}/\mathscr{M})_{\text{Nis}},\op{Inv}^{\bull})$ obtaining 

\[\op{Inv}^{\bull}(\mathscr{M})=H^0((\textit{Sm}/\mathscr{M})_{\text{Nis}},\op{Rinv}^{\bull})=H^0((\textit{Sm}/\mathscr{M})_{\text{Nis}},(\op{H}^{\bull})^{\text{Nis}})\]

Where $(\op{H}^{\bull})^{\text{Nis}}$ denotes that we are taking the sheafification in the Nisnevich topology. Then by the standard description of derived functors we get

\[ \op{Inv}^{j}(\mathscr{M}) = \op{R}^{j}i_*(\mu_p^{\otimes j})(\mathscr{M})\]

\end{proof}

\begin{corollary}
The same results hold for the $m$-Nisnevich sites if $p$ divides $m$.
\end{corollary}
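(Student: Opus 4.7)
The plan is to rerun the proof of Theorem \ref{princ} verbatim, with the Nisnevich topology replaced throughout by the $m$-Nisnevich topology. Write $\op{RInv}^{\bull}_m$ for the sheafification of the presheaf $U\mapsto \op{H}^{\bull}(U)$ on $(\textit{Sm}/\mathscr{M})_{\text{m-Nis}}$; the goal is to establish $\op{Inv}^{\bull}(\mathscr{M}) = \op{RInv}^{\bull}_m(\mathscr{M})$.

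All of the structural pieces go through essentially for free once $p\mid m$. Corollary \ref{eqtopoi} was formulated to cover the $m$-Nisnevich site, and Theorem \ref{sheaf} establishes the $m$-Nisnevich sheaf property of $\op{Inv}^{\bull}$ precisely under the hypothesis $p\mid m$; together these yield the tautological equality $\op{Inv}^{\bull}(\mathscr{M}) = H^0((\textit{Sm}/\mathscr{M})_{\text{m-Nis}},\op{Inv}^{\bull})$. The injection $\op{RInv}^{\bull}_m \hookrightarrow \op{Inv}^{\bull}$ also transfers without change, since its proof only uses Lemma \ref{coomhens}, a statement about Henselian rings independent of the ambient topology. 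Lemma \ref{Inj} and Corollary \ref{pgen} are likewise assertions about regular Henselian local rings of smooth schemes, so they remain valid.

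It then remains to prove the $m$-Nisnevich analog of Proposition \ref{princschemi}: on a smooth scheme $X$ over $k_0$, $\op{RInv}^{\bull}_m(X) = \op{H}_{nr}^{\bull}(X) = \op{Inv}^{\bull}(X)$. The right-hand equality is Proposition \ref{princschemi} itself, so the real content is to see that the Nisnevich and $m$-Nisnevich sheafifications of $\op{H}^{\bull}$ agree on the subcategory of smooth schemes. This is the main obstacle, and the strategy is to adapt the transfer argument of Lemma \ref{sheafNis} from fields to smooth schemes: given a finite $m$-Nisnevich cover $\{U_i \to X\}$ whose components have generic fiber degrees $d_i$ with $\sum a_i d_i \equiv 1 \pmod{m}$, the norm maps assemble into a transfer $T$ retracting the pullback $\mathscr{H}^{\bull}(X) \to \bigoplus \mathscr{H}^{\bull}(U_i)$. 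Since $\mathscr{H}^{\bull} = \op{H}_{nr}^{\bull}$ on smooth schemes is determined by its value at generic points via the Gersten resolution, the $m$-Nisnevich sheaf condition reduces to the case of fields, which is exactly Lemma \ref{sheafNis}. Plugging these ingredients into the derived-functor identity as in the proof of Theorem \ref{princ} then yields the corollary.
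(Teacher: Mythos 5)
Your structural reductions are fine, but the one genuinely new step you introduce --- the scheme-level transfer argument showing that the Nisnevich and $m$-Nisnevich sheafifications of $\op{H}^{\bull}$ agree on smooth schemes --- has a gap. An $m$-Nisnevich cover $\{U_i \to X\}$ is étale but almost never \emph{finite} over $X$, and Rost's norm/pushforward only carries unramified classes to unramified classes along proper (in particular finite) morphisms: rule R3b expresses $\partial_{v}\circ c_{E/F}$ as a sum over \emph{all} extensions of $v$ to $E$, and for a non-finite $U_i \to X$ some of those extensions need not correspond to codimension-one points of $U_i$, so unramifiedness of a class on $U_i$ does not give vanishing of the relevant residues on $X$. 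Hence the claimed retraction $T:\bigoplus \mathscr{H}^{\bull}(U_i) \to \mathscr{H}^{\bull}(X)$ is not defined as stated. Relatedly, the assertion that the sheaf condition ``reduces to the case of fields'' only disposes of separatedness (injectivity into the generic points); the gluing half still requires you to show that the descended class in $\op{H}^{\bull}(k(X))$ is unramified at every codimension-one point of $X$, which is exactly where the finiteness issue reappears.

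The fix is to notice that you already have everything needed for a purely formal argument, which is what the paper does in one line. Since the $m$-Nisnevich topology refines the Nisnevich topology, the $m$-Nisnevich sheafification of $\op{H}^{\bull}$ is the $m$-Nisnevich sheafification of its Nisnevich sheafification; the latter is $\op{RInv}^{\bull} = \op{Inv}^{\bull}$ by Theorem \ref{princ}, and by Theorem \ref{sheaf} this is \emph{already} a sheaf in the finer smooth $m$-Nisnevich topology when $p \mid m$, so the second sheafification does nothing. Taking sections over $\mathscr{M}$ then gives the corollary with no transfer argument on schemes at all.
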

\begin{proof}
This is clear as we already know that that cohomological invariants form a sheaf in the finer $m$-Nisnevich topologies.
\end{proof}

We can use the description of the cohomological invariants on schemes to deduce two important properties of cohomological invariants.

\begin{lemma} $ $
\begin{itemize}
\item Let $U \rightarrow X$ be an open immersion of schemes, such that the codimension of the complement of $N$ is at least $2$. Then $\op{H}_{nr}^{\bull}(X)=\op{H}_{nr}^{\bull}(U)$
\item An affine bundle $E \rightarrow X$ induces an isomorphism on unramified cohomology.
\end{itemize}
\end{lemma}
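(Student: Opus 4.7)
My plan is to deduce both parts from the description of unramified cohomology as a kernel of residues, using in the second part the Gersten identification $\mathscr{H}^{\bull} \simeq \op{H}_{nr}^{\bull}$ from the previous lemma together with $\mathbb{A}^1$-homotopy invariance of étale cohomology.

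For part (1), I would first reduce to irreducible (hence, after replacement by normalization, normal) $X$: the definition of $\op{H}_{nr}^{\bull}$ already passes through the normalization, and normalization commutes with the open immersion $U \hookrightarrow X$. Then $k(U)=k(X)$, and since the complement $X\smallsetminus U$ has codimension $\geq 2$, no codimension-one point of (the normalization of) $X$ is removed in passing to $U$. Consequently both $\op{H}_{nr}^{\bull}(X)$ and $\op{H}_{nr}^{\bull}(U)$ are computed as the kernel of the same direct sum of residue maps
\[ \op{H}^{\bull}(k(X)) \longrightarrow \bigoplus_{p \in X^{(1)}} \op{H}^{\bull}(k(v_p)),\]
and the equality is tautological.

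For part (2), call $\pi:E\to X$ the affine bundle and use the previous lemma to rewrite the claim in terms of the Zariski sheaf $\mathscr{H}^{\bull}$: it suffices to show that the adjunction map $\mathscr{H}^{\bull}_X \to \pi_* \mathscr{H}^{\bull}_E$ is an isomorphism of Zariski sheaves on $X$. This is checkable on stalks. At a point $x\in X$, Lemma \ref{coomhens} identifies the stalk of $\mathscr{H}^{\bull}_X$ with $\op{H}^{\bull}(\mathcal{O}^{h}_{X,x})$, and the stalk of $\pi_* \mathscr{H}^{\bull}_E$ with $\op{H}^{\bull}$ of $E \times_X \op{Spec}(\mathcal{O}^{h}_{X,x})$. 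Because an affine bundle trivialises Zariski-locally on $X$, this base change is just $\mathbb{A}^n$ over the Henselian local ring, and the desired isomorphism reduces to $n$ applications of $\mathbb{A}^1$-homotopy invariance of étale cohomology with $\mu_p^{\otimes j}$ coefficients. Once the map of sheaves is an isomorphism, taking $H^0_{\text{Zar}}$ yields the claim.

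The main obstacle I anticipate is the $\mathbb{A}^1$-invariance step in the form needed, namely for a Henselian local base; this is classical (SGA 4, XV) and valid because our standing assumption on $k_0$ forces the residue characteristic at every point to be coprime to $p$. The only other minor point is the interpretation of ``affine bundle'' as Zariski-locally trivial (e.g.\ a torsor under a vector bundle), which is what makes the stalk argument reduce cleanly to the trivialised case.
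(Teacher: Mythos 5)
Your first bullet point is exactly what the paper does: the statement is true by definition, since removing a closed subset of codimension $\geq 2$ changes neither the function field (of the normalization) nor the set of codimension-one points, so the two kernels coincide. No issue there.

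For the second bullet point the paper simply cites Rost \cite[8.6]{Ro96} (homotopy invariance of $A^0(-,M)$ for affine bundles), whereas you try to reprove it by a stalkwise argument; unfortunately the key step has a genuine gap. The problem is your identification of the stalk of $\pi_*\mathscr{H}^{\bull}_E$ at $x$ with $\op{H}^{\bull}$ of $E\times_X \op{Spec}(\mathcal{O}^h_{X,x})$. While the stalk of a sheafification agrees with the stalk of the presheaf — so the stalk of $\mathscr{H}^{\bull}_X$ is indeed $\op{H}^{\bull}$ of the (Zariski, or Henselian if you work Nisnevich-locally; you conflate the two, and Lemma \ref{coomhens} only applies in the Henselian case) local ring — this reasoning does not apply to the pushforward. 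The sections of $\pi_*\mathscr{H}^{\bull}_E$ over $U$ are $H^0(\pi^{-1}(U),\mathscr{H}^{\bull})=\op{H}^{\bull}_{nr}(\mathbb{A}^n_U)$, i.e.\ unramified classes, not étale cohomology classes, and $\pi^{-1}(U)$ does not become local as $U$ shrinks to $x$: its special fibre is all of $\mathbb{A}^n_{k(x)}$. So the colimit you need to compute is $\varinjlim_U \op{H}^{\bull}_{nr}(\mathbb{A}^n_U)$, which is $A^0(\mathbb{A}^n_R,\op{H}^{\bull})$ for $R$ the local ring, and identifying this with $\op{H}^{\bull}_{nr}$ of $\op{Spec}(R)$ is precisely homotopy invariance of unramified cohomology — the statement you set out to prove. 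Plain $\mathbb{A}^1$-invariance of étale cohomology over a Henselian base does not substitute for it, because the sheaf $\mathscr{H}^{\bull}$ is not the presheaf $\op{H}^{\bull}$.

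The argument can be completed, but it takes more than ``$n$ applications of $\mathbb{A}^1$-invariance'': e.g.\ show injectivity of $\mathscr{H}^{\bull}_X\to\pi_*\mathscr{H}^{\bull}_E$ via the injection of both sheaves into the constant sheaf at the generic point (using that $\op{H}^{\bull}(k(X))\to\op{H}^{\bull}(k(E))$ is split injective), and surjectivity by first using homotopy invariance over the \emph{field} $k(X)$ to descend an unramified class on $\mathbb{A}^n_{k(X)}$ to $\op{H}^{\bull}(k(X))$, and then comparing residues at $p\in U^{(1)}$ with residues at the generic point of $\mathbb{A}^n_{k(p)}$ via Rost's compatibility rules. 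At that point you have essentially reproved \cite[8.6]{Ro96}, which is why the paper just cites it.
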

\begin{proof}
The first statement is true by definition and the second is proven in \cite[8.6]{Ro96}.
\end{proof}

\begin{proposition}\label{codim}
 Let $\mathscr{N} \rightarrow \mathscr{M}$ be an open immersion of algebraic stacks, such that the codimension of the complement of $\mathscr{N}$ is at least $2$. Then $\op{Inv}^{\bull}(\mathscr{M})=\op{Inv}^{\bull}(\mathscr{N})$.
\end{proposition}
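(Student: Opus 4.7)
The plan is to combine the smooth-Nisnevich sheaf property of $\op{Inv}^{\bull}$ (Theorem \ref{sheaf}) with the codimension-$2$ purity statement for unramified cohomology recorded in the preceding lemma (together with Proposition \ref{princschemi}), using a smooth-Nisnevich cover from Proposition \ref{cover} as a bridge from stacks to spaces. Concretely, I would pick a smooth-Nisnevich cover $p : U \to \mathscr{M}$ with $U$ a smooth algebraic space and set $U' = U \times_{\mathscr{M}} \mathscr{N}$; then $p' : U' \to \mathscr{N}$ is again a smooth-Nisnevich cover, and because $p$ is smooth (hence flat) the complement $U \smallsetminus U' = p^{-1}(\mathscr{M} \smallsetminus \mathscr{N})$ has codimension at least $2$ in $U$. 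The same holds one level up: $U \times_{\mathscr{M}} U$ is a smooth algebraic space over $k_0$, and its open subspace $U' \times_{\mathscr{N}} U' = (U \times_{\mathscr{M}} U) \times_{\mathscr{M}} \mathscr{N}$ has complement of codimension at least $2$.

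The middle step is to extend the purity statement $\op{Inv}^{\bull}(X) = \op{Inv}^{\bull}(V)$ (for $V \subset X$ open with complement of codimension $\geq 2$) from smooth schemes to smooth quasi-separated algebraic spaces. Given such a $Y$ with open subspace $V$, one selects an étale Nisnevich cover $\widetilde{Y} \to Y$ by a scheme (available by \cite[6.3]{Kn71}), pulls it back to $V$, and applies Proposition \ref{princschemi} to both $\widetilde{Y}$ and $\widetilde{Y} \times_Y \widetilde{Y}$ together with the equalizer presentation from Theorem \ref{sheaf} to conclude. Applied to the pairs $(U, U')$ and $(U \times_{\mathscr{M}} U,\ U' \times_{\mathscr{N}} U')$, this yields
\[ \op{Inv}^{\bull}(U) = \op{Inv}^{\bull}(U'), \qquad \op{Inv}^{\bull}(U \times_{\mathscr{M}} U) = \op{Inv}^{\bull}(U' \times_{\mathscr{N}} U'). \]

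Finally, Theorem \ref{sheaf} presents both $\op{Inv}^{\bull}(\mathscr{M})$ and $\op{Inv}^{\bull}(\mathscr{N})$ as the equalizers of the diagrams
\[ \op{Inv}^{\bull}(U) \rightrightarrows \op{Inv}^{\bull}(U \times_{\mathscr{M}} U), \qquad \op{Inv}^{\bull}(U') \rightrightarrows \op{Inv}^{\bull}(U' \times_{\mathscr{N}} U'), \]
which coincide under the identifications above, giving the desired equality. The only nontrivial ingredient is the bootstrap of purity from schemes to smooth algebraic spaces; this is essentially formal Nisnevich descent (made possible by Proposition \ref{spezz}), so I expect no genuine obstacle beyond some careful bookkeeping to keep track of codimensions through the various fibre products.
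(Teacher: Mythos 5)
Your proposal is correct and follows essentially the same route as the paper: choose a smooth-Nisnevich cover, note that smoothness preserves the codimension of the complement, apply codimension-$2$ purity for unramified cohomology of smooth schemes (via Proposition \ref{princschemi}) to the cover and to its self-fibre-product, and compare the two equalizer diagrams coming from Theorem \ref{sheaf}. Your extra step of bootstrapping purity from schemes to quasi-separated algebraic spaces via a Nisnevich cover by a scheme is a detail the paper leaves implicit (its double fibre product is in general only an algebraic space), and it is the right way to fill that in.
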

\begin{proof}

Let $\pi: X \rightarrow \mathscr{M}$ be an element of  smooth-Nisnevich cover of $\mathscr{M}$ by a scheme. As all the elements we will consider belong to $\textit{AlStk}/\mathscr{M}$ we write $A \times B$ for $A \times_{\mathscr{M}} B$. Name $U$ the open subscheme $X \times \mathscr{N}$ of $X$. Consider the commutative diagram:

\begin{center}
 $\xymatrixcolsep{3pc}
\xymatrix{\op{Inv}^{\bull}(\mathscr{M}) \ar@{->}[d]^{i^{*}} \ar@{->}[r]^{\pi^{*}} & \op{Inv}^{\bull}(X) \ar@{->}[d]^{i_{1}^{*}} \ar@/^/[r]^{\op{Pr_1}^{*}} \ar@/_/[r]_{\op{Pr_2}^{*}} & \op{Inv}^{\bull}(X \times X) \ar@{->}[d]^{{i_2}^{*}} \\ 
\op{Inv}^{\bull}(\mathscr{N}) \ar@{->}[r]^{\pi_{1}^{*}} & \op{Inv}^{\bull}(U) \ar@/^/[r]^{\op{Pr_1}^{*}} \ar@/_/[r]_{\op{Pr_2}^{*}} & \op{Inv}^{\bull}(U \times_{\mathscr{N}} U)} 
$
\end{center}

As ${i_1}^{*},{i_2}^{*}$ are isomorphisms (a smooth map fixes codimension), the elements of $\op{Inv}^{\bull}(X)$ satisfying the gluing conditions are the same as those of $\op{Inv}^{\bull}(U)$.

\end{proof}

\begin{proposition}\label{bundle}
Let $\mathscr{M}$ be an algebraic stack smooth over $k_0$. An affine bundle $\rho:\mathscr{V} \rightarrow \mathscr{M}$ induces an isomorphism on cohomological invariants.
\end{proposition}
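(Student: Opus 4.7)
\emph{Plan.} The strategy is to combine the sheaf property of cohomological invariants (Theorem \ref{sheaf}) with the fact, recorded in the preceding lemma and Proposition \ref{princschemi}, that an affine bundle of smooth schemes induces an isomorphism on unramified cohomology. This is the same template used in Proposition \ref{codim}.

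First I would pick a smooth-Nisnevich cover $\pi \colon X \to \mathscr{M}$ with $X$ a scheme (possible by Proposition \ref{cover}, refining if necessary by an étale Nisnevich cover of the resulting algebraic space using \cite[6.3]{Kn71}). Base changing the affine bundle $\rho$ along $\pi$ produces an affine bundle $V := X\times_{\mathscr{M}}\mathscr{V}\to X$ of schemes. Since the composition $V\to\mathscr{V}$ is the base change of $\pi$ along $\rho$, it inherits smoothness and the point-lifting property, hence is itself a smooth-Nisnevich cover of $\mathscr{V}$. The same argument identifies $V\times_{\mathscr{V}} V$ with $(X\times_{\mathscr{M}} X)\times_{\mathscr{M}}\mathscr{V}$, which is an affine bundle over $X\times_{\mathscr{M}} X$.

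Next I would observe that everything in sight is smooth over $k_0$: since $\pi$ is smooth and $\mathscr{M}$ is smooth over $k_0$, both $X$ and $X\times_{\mathscr{M}} X$ are smooth over $k_0$, and affine bundles over smooth bases are smooth. Theorem \ref{sheaf} then yields a commutative diagram of equalizers
\[
\xymatrix{
\op{Inv}^{\bull}(\mathscr{M}) \ar[r]\ar[d]^{\rho^{*}} & \op{Inv}^{\bull}(X) \ar@<.4ex>[r]\ar@<-.4ex>[r]\ar[d] & \op{Inv}^{\bull}(X\times_{\mathscr{M}} X) \ar[d] \\
\op{Inv}^{\bull}(\mathscr{V}) \ar[r] & \op{Inv}^{\bull}(V) \ar@<.4ex>[r]\ar@<-.4ex>[r] & \op{Inv}^{\bull}(V\times_{\mathscr{V}} V)
}
\]
in which the two right-hand vertical arrows are pullbacks along affine bundles of smooth schemes. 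By Proposition \ref{princschemi} and the preceding lemma, both of these are isomorphisms, so the induced map on equalizers $\rho^{*}$ is an isomorphism as well.

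The main obstacle I anticipate is ensuring that the fiber product $X\times_{\mathscr{M}} X$ really is a scheme (or at least that Proposition \ref{princschemi} applies to it), since representability of $\pi$ only gives an algebraic space. This is handled by choosing $X$ inside a scheme atlas and, if necessary, taking a further étale Nisnevich refinement of $X\times_{\mathscr{M}} X$ by a scheme via \cite[6.3]{Kn71}; the sheaf property of Theorem \ref{sheaf} on the smooth-Nisnevich site (together with Proposition \ref{spezz}, which identifies the Nisnevich and smooth-Nisnevich sheafifications) then reduces the computation on the algebraic space to the scheme case, where Proposition \ref{princschemi} and the affine-bundle lemma apply directly.
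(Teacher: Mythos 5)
Your proposal is correct and follows essentially the same route as the paper: base change the affine bundle along a smooth-Nisnevich cover $X\to\mathscr{M}$, apply the affine-bundle invariance for unramified cohomology of smooth schemes (via Proposition \ref{princschemi}) to the vertical maps, and conclude via the equalizer diagram from Theorem \ref{sheaf}. Your extra care about $X\times_{\mathscr{M}}X$ being only an algebraic space is a point the paper glosses over, but it is a refinement of the same argument rather than a different one.
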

\begin{proof}
Consider a smooth-Nisnevich cover $f: X \rightarrow \mathscr{M}$. We have a cartesian square

\begin{center}
$\xymatrixcolsep{5pc}
\xymatrix{ \mathscr{V}\times_{\mathscr{M}} X \ar@{->}[d]^{p_1} \ar@{->}[r]^{p_2} & X \ar@{->}[d]^{f}  \\ 
\mathscr{V} \ar@{->}[r]^{\rho} & \mathscr{M} } 
$
\end{center}

The horizontal arrows are affine bundles, and the vertical arrows are smooth-Nisnevich covers. Moreover, we can choose $f$ to trivialize $\mathscr{V}$. The rings of cohomological invariants of $X$ and $\mathscr{V}\times_{\mathscr{M}}X$ are isomorphic, and we can easily see that the gluing conditions hold for an invariant of $\mathscr{V}\times_{\mathscr{M}}X$ if and only if they hold for the corresponding invariant of $X$.
\end{proof} 

By putting together these results we get an alternative proof of Totaro's theorem from \cite[appendix C]{GMS03}:

\begin{theorem}[Totaro]
Let $G$ be an affine algebraic group smooth over $k_0$. Suppose that we have a representation $V$ of $G$ and a closed subset $Z \subset V$ such that the codimension of $Z$ in $V$ is $2$ or more, and the complement $U = V \setminus Z$ is a $G$-torsor. Then the group of cohomological invariants of $G$ is isomorphic to the unramified cohomology of $U/G$. 
\end{theorem}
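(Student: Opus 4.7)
The plan is to interpret everything in terms of stacks and then chain together the three structural results we have proven: invariance under vector bundles (Proposition \ref{bundle}), invariance under passing to an open with complement of codimension $\geq 2$ (Proposition \ref{codim}), and the identification of cohomological invariants with unramified cohomology for smooth schemes (Proposition \ref{princschemi}). Concretely, the cohomological invariants of $G$ are by definition $\op{Inv}^{\bull}(BG)$, and we will build a chain of canonical isomorphisms connecting $\op{Inv}^{\bull}(BG)$ to $\op{H}_{nr}^{\bull}(U/G)$.

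First I would set up the relevant stacks. Since $V$ is a representation of $G$, the quotient stack $[V/G]$ comes equipped with a structural map $\rho: [V/G] \to BG$ which is a vector bundle (the total space over a $G$-torsor $T \to S$ pulls back to the associated bundle $T\times^{G} V \to S$). The closed substack $[Z/G] \hookrightarrow [V/G]$ has codimension equal to the codimension of $Z$ in $V$, which is at least $2$. The open complement is $[U/G]$, and because $G$ acts freely on $U$ (the hypothesis that $U$ is a $G$-torsor), the quotient stack $[U/G]$ is represented by the algebraic space $U/G$. Since $V$ is smooth over $k_0$ and the $G$-action on $U$ is free with smooth $G$, the space $U/G$ is smooth over $k_0$, so Proposition \ref{princschemi} applies to it.

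Next I would assemble the chain of isomorphisms. Applying Proposition \ref{bundle} to the vector bundle $\rho: [V/G] \to BG$ gives
\[ \op{Inv}^{\bull}(BG) \xrightarrow{\;\sim\;} \op{Inv}^{\bull}([V/G]). \]
Applying Proposition \ref{codim} to the open immersion $[U/G] \hookrightarrow [V/G]$, whose complement $[Z/G]$ has codimension $\geq 2$, gives
\[ \op{Inv}^{\bull}([V/G]) \xrightarrow{\;\sim\;} \op{Inv}^{\bull}([U/G]) = \op{Inv}^{\bull}(U/G). \]
Finally, since $U/G$ is a smooth algebraic space, Proposition \ref{princschemi} (extended in the obvious way from schemes to algebraic spaces, which is legitimate because the statement is étale-local and Nisnevich-local computations suffice) gives
\[ \op{Inv}^{\bull}(U/G) \xrightarrow{\;\sim\;} \op{H}_{nr}^{\bull}(U/G). \]
Composing these three isomorphisms produces the desired identification $\op{Inv}^{\bull}(BG) \cong \op{H}_{nr}^{\bull}(U/G)$.

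There is no real obstacle beyond verifying that the hypotheses of the three input propositions are actually met. The only mildly subtle point is that Proposition \ref{princschemi} was stated for schemes, whereas $U/G$ may only be an algebraic space; but since its proof relies on the Gersten resolution and on the Nisnevich-local structure of smooth objects, both of which are available for smooth algebraic spaces via a Nisnevich cover by a scheme (guaranteed by \cite[6.3]{Kn71}), no further argument is required. Everything else is formal manipulation of the structural results already established.
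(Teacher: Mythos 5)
Your proposal is correct and follows essentially the same route as the paper: apply Proposition \ref{bundle} to the vector bundle $[V/G]\to BG$, then Proposition \ref{codim} to the open immersion $U/G\hookrightarrow [V/G]$, and conclude via the identification of invariants of a smooth space with unramified cohomology. Your extra remark about extending Proposition \ref{princschemi} from schemes to algebraic spaces is a reasonable addition (the paper handles this point separately, in the later Proposition-Definition on unramified cohomology of quasi-separated algebraic spaces), but the argument is the same.
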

\begin{proof}
The map $\left[ V/G \right] \rightarrow \op{B}G$ is a vector bundle, so by proposition \ref{bundle} it induces an isomorphism on cohomological invariants by pullback. As $U/G \rightarrow \left[ V/G \right]$ is an open immersion satisfying the requirements of proposition \ref{codim}, it induces an isomorphism on cohomological invariants too.
\end{proof}

Lastly we show a useful spectral sequences that may help in computing cohomological invariants. It uses the equivariant version of Chow groups with coefficients described by Guillot in \cite[sec.2]{Gu08}, in conjunction with the Bloch-Ogus-Rost spectral sequence. It was shown to exist in the case of classical cohomological invariants by Guillot in \cite[sec.5]{Gu08}. Using our machinery it can be easily extended to all quotient stacks. 

Recall that given a scheme $X$, smooth over $k_0$, Rost's Chow ring with coefficients \cite{Ro96} is a bigraded ring $A^{\bull}(X,M)$. The coefficients are taken in a cycle module $M$, which for now will be equal to $\op{H}^{\bull}$. The first grading is given by codimension, and the second grading comes from the cycle module. The component of degree $(i,j)$ is written $A^{i}(X,M^j)$.

 The equivariant Chow ring $A_{G}^{\bull}(X,M)$ extends the notion to the equivariant setting, and can be thought as the Chow ring of the quotient stack $\left[X/G \right]$. In fact it can be proven that it only depends on the stack and not on the specific presentation as a quotient. Finally, the group $A^0(X,\op{H}^{\bull})$ is by definition equal to $\op{H}_{\textit{nr}}^{\bull}(X)$, so by (\ref{princschemi}) for a smooth scheme $X$ we have $$A^{0}(X,\op{H}^{\bull})=\op{Inv}^{\bull}(X).$$

\begin{theorem}[Rost, Bloch and Ogus]\label{robloog}
 Let $G$ be an affine smooth group scheme over $k_0$, acting on a scheme $X$, also smooth over $k_0$. Moreover suppose that $k_0$ contains a primitive root of unit, so that we can identify $\mu_p$ with $\mathbb{Z}/p\mathbb{Z}$. Then there is a first quadrant spectral sequence:
$$E_2^{r,q} = A^r_G(X, \op{H}^{q-r}) \Rightarrow H^{r+q}_{\textit{\'et}}(\left[X/G \right],\mathbb{Z}/p\mathbb{Z})$$
Coming from the Rost-Bloch-Ogus spectral sequence for schemes. Additionally, we have $$A^0_G(X,\op{H}^{\bull})=\op{Inv}^{\bull}(\left[X/G\right]).$$
\end{theorem}
\begin{proof}
The proof in \cite[sec.5]{Gu08} holds almost word by word in our situation. The second statement is a corollary of propositions (\ref{bundle}, \ref{codim}).
\end{proof}

The sequence is particularly useful as $A^r_G(X, \op{H}^{q-r})=0$  when $q-r < 0$, so the groups appearing in it are zero under the main diagonal.

\begin{corollary}
Let $G$ and $X$ be as above. Suppose that $k_0$ contains a primitive $p$-th root of unit. We have $$\op{Inv}^{1}(\left[ X/G \right])=H^{1}_{\textit{\'et}}(\left[X/G \right],\mathbb{Z}/p\mathbb{Z}).$$

 Moreover, $\op{Inv}^{2}(\left[ X/G \right])$ injects into $H^{2}_{\textit{\'et}}(\left[X/G \right],\mathbb{Z}/p\mathbb{Z})$.
\end{corollary}
\section{The invariants of $\mathscr{M}_{1,1}$}

As a first application of the results in this section, we compute the cohomological invariants of the stack $\mathscr{M}_{1,1}$ of elliptic curves. The computation will use Rost's Chow ring with coefficients.

Recall again that for a smooth scheme $X$ we have $\op{Inv}^{\bull}(X)=A^{0}(X,\op{H}^{\bull})$, where $A^{0}(X,\op{H}^{\bull})$ is the $0$-codimensional component of the Chow ring with coefficients, and the coefficients functor is étale cohomology. In the proof we will shorten $A^{\bull}(X,\op{H}^{\bull})$ to $A^{\bull}(X)$.

Also recall that an algebraic group $G$ is called \emph{special} if any $G$-torsor is Zariski-locally trivial. This implies that for any $X$ being acted upon by $G$, the map $X \rightarrow \left[X/G \right]$ is a smooth-Nisnevich cover. Examples of special groups are $GL_n$ and $Sp_n$.

\begin{theorem}\label{ellip}
Suppose the characteristic of $k_0$ is different from two or three. Then the cohomological invariants of $\mathscr{M}_{1,1}$ are trivial if $(p,6)=1$.

 Otherwise, \[\op{Inv}^{\bull}(\mathscr{M}_{1,1})\simeq\faktor{\op{H}^{\bull}(k_0)\left[ t \right]}{(t^2 - \lbrace -1 \rbrace t)}\] where the degree of $t$ is $1$.
 
  The generator $t$ sends an elliptic curve over a field $k$ with Weierstrass form $y^2 = x^3 + ax + b$ to the element $\left[ 4a^3 + 27b^2 \right] \in k^{*}/k^{*p} \simeq \op{H}^{1}(\op{Spec}(k))$.
\end{theorem}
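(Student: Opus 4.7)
The strategy is to realize $\mathscr{M}_{1,1}$ as a quotient $[U/\mathbb{G}_m]$ on a smooth scheme, apply Theorem~\ref{sheaf} to reduce to an equalizer, and then compute using unramified cohomology via Proposition~\ref{princschemi}. Since $\op{char}(k_0)\neq 2,3$, every elliptic curve admits a Weierstrass form $y^2=x^3+ax+b$ unique up to the $\mathbb{G}_m$-action $\lambda\cdot(a,b)=(\lambda^{4}a,\lambda^{6}b)$, yielding $\mathscr{M}_{1,1}\cong[U/\mathbb{G}_m]$ with $U=\mathbb{A}^2_{(a,b)}\setminus\Delta$ and $\Delta=\{4a^3+27b^2=0\}$. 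Since $\mathbb{G}_m$ is special, $U\to\mathscr{M}_{1,1}$ is a smooth-Nisnevich cover and $U\times_{\mathscr{M}_{1,1}}U\simeq U\times\mathbb{G}_m$, so
\[ \op{Inv}^{\bull}(\mathscr{M}_{1,1})=\op{eq}\bigl(\op{Inv}^{\bull}(U)\rightrightarrows\op{Inv}^{\bull}(U\times\mathbb{G}_m)\bigr), \]
with the two arrows being projection and action pullbacks.

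Next, $\op{Inv}^{\bull}(U)=\op{H}^{\bull}_{nr}(U)$ by Proposition~\ref{princschemi}, and the Rost localization long exact sequence for $\Delta\hookrightarrow\mathbb{A}^2$ reads
\[ 0\to\op{H}^{\bull}_{nr}(\mathbb{A}^2)\to\op{H}^{\bull}_{nr}(U)\xrightarrow{\partial_{\Delta}}A^0(\Delta,\op{H}^{\bull-1})\to A^1(\mathbb{A}^2,\op{H}^{\bull}). \]
Homotopy invariance gives $\op{H}^{\bull}_{nr}(\mathbb{A}^2)=\op{H}^{\bull}(k_0)$ and $A^1(\mathbb{A}^2,\op{H}^{\bull})=0$, and the parametrization $t\mapsto(-3t^2,2t^3)$ exhibits the normalization of $\Delta$ as $\mathbb{A}^1_{k_0}$ with unibranch cusp, so $A^0(\Delta,\op{H}^{\bull-1})\cong\op{H}^{\bull-1}(k_0)$. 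The class $\tau:=\{4a^3+27b^2\}$ is a unit off $\Delta$ and a local parameter along it, so $\tau\in\op{H}^{\bull}_{nr}(U)$ with $\partial_{\Delta}(\tau)=1$, splitting the sequence; hence
\[ \op{Inv}^{\bull}(U)=\op{H}^{\bull}(k_0)\oplus\op{H}^{\bull-1}(k_0)\cdot\tau, \]
and the Steinberg identity $\{f\}^2=\{-1\}\{f\}$ yields $\tau^2=\{-1\}\tau$.

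Imposing the gluing condition on $\alpha+\beta\tau$ with $\alpha,\beta\in\op{H}^{\bull}(k_0)$, I compute $\op{Pr}_1^*\tau=\tau$ and $\op{Pr}_2^*\tau=\{\lambda^{12}(4a^3+27b^2)\}=\tau+12\{\lambda\}$, so the condition reduces to $12\beta\{\lambda\}=0$ in $\op{Inv}^{\bull}(U\times\mathbb{G}_m)$. When $(p,6)\neq 1$ we have $12\equiv 0\pmod p$ and the equation is automatic, yielding $\op{Inv}^{\bull}(\mathscr{M}_{1,1})=\op{H}^{\bull}(k_0)[\tau]/(\tau^2-\{-1\}\tau)$. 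When $(p,6)=1$, $12$ is invertible modulo $p$, so the condition becomes $\beta\{\lambda\}=0$; restricting to $\op{Spec}(k(x))\times\mathbb{G}_m$ for a point $x\in U$ and using that $\{\lambda\}$ generates a free $\op{H}^{\bull}(k(x))$-summand of $\op{H}^{\bull}_{nr}(\mathbb{G}_m/k(x))$ forces $\beta=0$, leaving only the scalar invariants $\op{H}^{\bull}(k_0)$.

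The main technical hurdle lies in the second step: justifying $A^0(\Delta,\op{H}^{\bull-1})=\op{H}^{\bull-1}(k_0)$ for the cuspidal cubic $\Delta$ requires verifying that Rost's formalism on the non-normal $\Delta$ reproduces the unramified cohomology of the normalization $\mathbb{A}^1_{k_0}$, which relies on unibranchness of the cusp so that the residue data at closed points of $\Delta$ transfers unambiguously between $\Delta$ and $\tilde\Delta$. Once this is in hand, the remaining steps --- the Steinberg computation, the explicit pullback $\{\lambda^{12}(4a^3+27b^2)\}$, and the fiberwise nontriviality of $\{\lambda\}$ --- are routine bookkeeping.
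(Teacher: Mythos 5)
Your argument is correct and follows the same overall architecture as the paper's proof: present $\mathscr{M}_{1,1}$ as $[U/\mathbb{G}_m]$, use specialness of $\mathbb{G}_m$ to reduce to the equalizer over the cover $U\to\mathscr{M}_{1,1}$, compute $\op{Inv}^{\bull}(U)=A^0(U,\op{H}^{\bull})$ by a Rost localization sequence, identify the degree-one generator with $\{4a^3+27b^2\}$, and check gluing via the pullback $\{\lambda^{12}(4a^3+27b^2)\}$ and the Steinberg relation. The one genuine difference is in the localization step, and it is exactly the point you flag as the ``main technical hurdle'': you run the sequence for the full cuspidal cubic $\Delta\hookrightarrow\mathbb{A}^2$ and must therefore justify $A^0(\Delta,M)\cong M(k_0)$ for the non-normal curve $\Delta$ via its normalization $\mathbb{A}^1\to\Delta$ and unibranchness at the cusp. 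That justification is available in Rost's formalism (the residue at a closed point of $\Delta$ is the sum over valuations centered there of corestrictions of residues, and here every point of $\Delta$ has a unique preimage with trivial residue field extension, so the complexes for $\Delta$ and $\tilde\Delta$ have the same kernel in top degree), so your route closes. The paper sidesteps the issue entirely by first puncturing the plane: it works with $X\subset\mathbb{A}^2\setminus(0,0)$, whose closed complement $\Delta\setminus(0,0)\simeq\mathbb{G}_m$ is smooth, computes $A^{\bull}(\mathbb{A}^2\setminus(0,0))$ and $A^0(\mathbb{G}_m)$ by two further elementary localization sequences, and never touches the singular curve. Your version is slightly shorter but leans on the behaviour of $A^0$ on non-normal schemes; the paper's is slightly longer but stays entirely within smooth strata. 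Both are valid, and the remaining steps (splitting by $\partial_{\Delta}(\tau)=1$, the $12\{\lambda\}$ computation, and the fiberwise freeness of $\{\lambda\}$ forcing $\beta=0$ when $p\nmid 12$) match the paper's.
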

\begin{proof}
Recall that if the characteristic of $k_0$ is different from $2$ and $3$ we have $\mathscr{M}_{1,1} \simeq \left[ X/G_m \right]$, where $X := \mathbb{A}^{2} \setminus \lbrace 4x^3 = 27Y^2 \rbrace$ and the action of $G_m$ is given by $(x,y,t) \rightarrow (xt^4,yt^6)$.

We will first determine the invariants of $X$.  As the multiplicative group is special, $X \rightarrow \mathscr{M}_{1,1}$ is a smooth-Nisnevich cover, so after we compute $\op{Inv}^{\bull}(X)$ all we have to do is check the gluing conditions.

Consider now the closed immersion $G_{m} \rightarrow \mathbb{A}^{2} \setminus (0,0)$ induced by the obvious map $G_m \rightarrow \lbrace 4x^3 = 27Y^2 \rbrace \setminus (0,0)$ given by the normalization $\mathbb{A}^{1} \rightarrow \lbrace 4x^3 = 27Y^2 \rbrace$. We have an exact sequence \cite[sec.5]{Ro96}

\begin{center}
$0 \rightarrow \op{A}^{0}(\mathbb{A}^2 \setminus (0,0)) \rightarrow \op{A}^{0}(X) \xrightarrow{\partial} \op{A}^{0}(G_m) \rightarrow \op{A}^{1}(\mathbb{A}^2 \setminus (0,0)) \rightarrow \op{A}^{1}(X) \xrightarrow{\partial} \op{A}^{1}(G_m)$
\end{center}

To compute $\op{A}^{\bull}(\mathbb{A}^{2} \setminus (0,0))$, we use a second exact sequence:

\begin{center}
$\op{A}^{1}(\mathbb{A}^2) \rightarrow \op{A}^{1}(\mathbb{A}^2 \setminus (0,0)) \rightarrow \op{A}^{0}(\op{Spec}(k)) \rightarrow \op{A}^{2}(\mathbb{A}^2)$
\end{center}

As $\mathbb{A}^{2}$ is a vector bundle over a point, the first and last term are zero, implying that $\op{A}^{1}(\mathbb{A}^2 \setminus (0,0))$ is generated as a $\op{A}^{\bull}(\op{Spec}(k_0))$-module by a single element in degree one. By the results in the previous section, we have  $\op{A}^{0}(\mathbb{A}^2 \setminus (0,0)) = \op{A}^{0}(\mathbb{A}^2) = \op{A}^{*}(\op{Spec}(k_0))$. As $\op{A}^{2}(\mathbb{A}^{2})$ is zero, the continuation to the left of the sequence above implies that also $\op{A}^{2}(\mathbb{A}^2 \setminus (0,0))$ is zero. 

Using the same technique we find that $\op{A}^{0}(G_m , \op{H}^{\bull})$ is a free $\op{H}^{\bull}(\op{Spec}(k_0)$-module generated by the identity in degree zero and an element $t$ in degree one.

We can now go back to the first exact sequence. Using all the data we obtained, we find that $\op{A}^{0}(X)$ is generated as a free $\op{A}^{\bull}(\op{Spec}(k_0))$ module by the identity and an element $\alpha$ in degree one.

Finding out what $\alpha$ is turns out to be easy, as $\op{H}^{1}(X)$ is generated by $\left[ 4x^3 + 27y^2 \right]$, seen as al element of $\mathscr{O}_{X}^{*}/\mathscr{O}_{X}^{*p}$, and this is clearly nonzero as a cohomological invariant.

The last thing we need to do is to check the gluing conditions; let $m: X\times_{\mathscr{M}_{1,1}} X = X \times G_m \rightarrow X$ be the multiplication map, and let $ \pi : X\times G_m \rightarrow X$ be the first projection. Consider the points $q,q':\op{Spec}(k(x,y,t))\rightarrow X$ defined respectively by $x \rightarrow x, y \rightarrow y$ and $x \rightarrow t^4 x , y \rightarrow t^6 y $. The values of $\alpha$ at $q$ and $q'$ are respectively equal to $m^{*}(\alpha)(\mu)$ and $\pi^{*}(\alpha)(\mu)$, where $\mu$ is the generic point of $X \times G_m$. It is necessary and sufficient for $\alpha$ to verify the gluing conditions that $\alpha(q)=\alpha(q')$.

We have $\alpha(q)= \left[ 4x^3 + 27y^2 \right]$, $ \alpha(q')= \left[ t^{12}(4x^3 + 27y^2) \right]$. These two elements of $\op{H}^{1}(\op{Spec}(k(x,y,t))$ are clearly equal if and only if $p$ divides $12$.

Finally, the relation $\alpha^2 = \lbrace -1 \rbrace \alpha$ is due to the fact that when we identify $\op{H}^1(k)$ with $k^*/(k^*)^p$ we have $\lbrace a \rbrace^2 = \lbrace -1 \rbrace \lbrace a \rbrace $ for any $a \in k^*$. One can see this as a consequence of the relations in Milnor's $K$-theory, as the morphism from Milnor's $K$-theory of $k$ to $\op{H}^{\bull}(k)$ is surjective in degree $1$, see \cite[pag.327 and rmk. 1.11]{Ro96}.
\end{proof}

 \section{Generalized cohomological invariants}

Our aim in this section is to construct a smooth-Nisnevich sheaf of cohomological invariants with values in a given cycle module $M$, satisfying the same description we have when $M$ is equal to Galois cohomology with torsion coefficients.

There are two main problems to be fixed here: first, we must find a new continuity condition, and secondly, we need to find a different way to identify the cohomological invariants of a smooth scheme $X$ with $A^0(X,M)$, as we do not have étale cohomology to act as a medium between the two.

In the following we will often restrict to only considering \emph{geometric} discrete valuation rings. This means that the ring $R$ is a $k_0$ algebra and the transcendence degree over $k_0$ of its residue field is equal to that of its quotient field minus one. This is the same as asking that our DVR is the local ring of an irreducible variety at a regular point of codimension one.

\begin{lemma}
Let $R$ be a DVR with valuation $v$, with quotient field $F$ and residue field $k$. Let $M$ be a cycle module. We denote by $K(v)$ the ring $K(F)/(1+m_v)$, where $K(F)$ is Milnor's $K$-theory of $F$, and by $M(v)$ the $K(v)$-module $M(k)\otimes_{K(k)} K(v)$.

 There are maps $i: M(k(v)) \rightarrow M(v)$ and $p: M(k(R)) \rightarrow M(v)$, and a split exact sequence:
 
 \[ 0 \rightarrow M(k(v)) \xrightarrow{i} M(v) \xrightarrow{\partial} M(k(v)) \rightarrow 0 \]
\end{lemma}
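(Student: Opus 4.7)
The plan is to reduce the statement to an explicit computation of $K(v)$ as a free module over $K(k(v))$, at which point tensoring with $M(k(v))$ produces the split exact sequence formally.

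First I would fix a uniformizer $\pi \in R$. The split short exact sequence $1 \to 1+m_v \to R^* \to k(v)^* \to 1$ combined with the valuation $v\colon F^* \to \mathbb{Z}$ produces an isomorphism $F^*/(1+m_v) \cong k(v)^* \oplus \mathbb{Z}\cdot\{\pi\}$. I would then extend this to higher Milnor $K$-theory via the classical Bass--Tate argument: bilinearity and the Steinberg relation in $K_*(F)$ imply that, modulo the ideal generated by $1+m_v$, every symbol in $K_n(F)$ can be rewritten as a sum of symbols whose entries lie in the lift of $k(v)^*$ together with $\pi$. This yields a $K(k(v))$-module isomorphism
\[
K(v) \;\cong\; K(k(v)) \;\oplus\; K(k(v))\cdot\{\pi\},
\]
where the second summand is shifted by one in degree; the projection onto the second factor (after stripping $\{\pi\}$) is canonical and agrees with the tame residue on $K_*(F)$, while the inclusion of the first factor is induced by the natural map $K(k(v)) \to K(v)$.

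Next I would apply the functor $M(k(v)) \otimes_{K(k(v))} (-)$. Since $K(v)$ is free over $K(k(v))$ on the basis $\{1, \{\pi\}\}$, tensoring preserves the direct-sum decomposition and produces
\[
M(v) \;=\; M(k(v)) \otimes_{K(k(v))} K(v) \;\cong\; M(k(v)) \oplus M(k(v))\cdot\{\pi\}.
\]
This is exactly the split exact sequence claimed, with $i$ the inclusion into the first factor and $\partial$ the projection onto the second (after forgetting $\{\pi\}$). Canonicity of $\partial$ is checked directly: another uniformizer has the form $u\pi$ with $u \in R^*$, whence $\{u\pi\} = \{u\} + \{\pi\}$, and the summand $\{u\}$ sits in $i(M(k(v)))$, so the $\{\pi\}$-coefficient of an element of $M(v)$ is well defined modulo the image of $i$.

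The main obstacle is establishing the Bass--Tate decomposition of $K(v)$ in the second paragraph: one must verify that the Milnor relations in $K_*(F)$, together with the imposed vanishing of $\{1+m_v\}$, are sufficient to reduce every symbol to the restricted form with entries in $k(v)^* \cup \{\pi\}$. Once this is settled, the splitting and its canonicity transfer formally through the tensor product $M(k(v)) \otimes_{K(k(v))} (-)$.
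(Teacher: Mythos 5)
The paper gives no argument here at all---it simply cites Rost's Remark 1.6---and your computation is essentially the standard one that remark rests on. The decomposition $K(v)\cong K(k(v))\oplus K(k(v))\cdot\{\pi\}$ is correct, and your route to it is the right one: reduce symbols to entries in $R^*\cup\{\pi\}$ by bilinearity and $\{\pi,\pi\}=\{\pi,-1\}$, check that the Steinberg relation survives the lift $k(v)^*\cong R^*/(1+m_v)$ (if $\bar u+\bar w=1$ then $u+w\in 1+m_v$ is a unit and $\{u,w\}$ lies in the ideal $(1+m_v)$), and obtain injectivity from the tame residue $\partial$ together with the specialization $s_\pi(\alpha)=\partial(\{-\pi\}\cdot\alpha)$, both of which kill the ideal $(1+m_v)$ and hence descend to $K(v)$. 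The tensor-product step and the uniformizer-independence of $\partial$ are then formal, as you say. One small correction: the sequence $1\to 1+m_v\to R^*\to k(v)^*\to 1$ is \emph{not} split for a general DVR (e.g. $R=\mathbb{Z}_{(p)}$ with $p>3$ has no element of order $p-1$ in $R^*$), but you never need a splitting of it---only the identification $R^*/(1+m_v)\cong k(v)^*$, which is a tautology.

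The genuine gap is the map $p\colon M(k(R))\to M(v)$. It is asserted in the statement, it is the map the paper actually uses in the continuity condition, and your argument never constructs it. It cannot fall out of the tensor-product formalism: $M(k(v))\otimes_{K(k(v))}K(v)$ is built only from $M(k(v))$ and Milnor $K$-theory, so a map out of $M(F)$ must invoke the cycle-module data connecting $M(F)$ to $M(k(v))$. Concretely one sets $p(\rho)=s_v^\pi(\rho)\otimes 1+\partial_v(\rho)\otimes\{\pi\}$, where $\partial_v$ and $s_v^\pi$ are the residue and specialization of the cycle module, and checks independence of $\pi$ using the rule relating $s_v^{u\pi}$ to $s_v^\pi$ via $\{\bar u\}\partial_v$ (Rost's R3f). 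With that paragraph added, your write-up recovers the full content of the cited remark.
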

\begin{proof}
This is \cite[rmk. 1.6]{Ro96}.
\end{proof}
 
 one easily sees that when $M$ is equal to Galois cohomology and $R$ is Henselian then $M(F)=M(v)$ and the continuity condition for a cohomological invariant is equivalent to asking that $p(F)=i(\alpha(k))$. This motivates the following definition:
 
 \begin{definition}
 Let $\mathscr{X}$ be an algebraic stack, and $M$ a cycle module. A cohomological invariant for $\mathscr{X}$ with coefficients in $M$ is a natural transformation between the functor of points
 
 \[ F_{\mathscr{X}}:(\mbox{field}/k_0) \rightarrow (\mbox{set}) \]
 
 And $M$, satisfying the following property: given a Henselian DVR $R$ as above and a map $\op{Spec}(R) \rightarrow \mathscr{X}$ we have $p(F)=i(\alpha(k))$ in $M(v)$. We denote the functor of cohomological invariants with coefficients in $M$ by $\op{Inv}^{\bull}(-,M)$.
 \end{definition}
 
\begin{remark}\label{sv}
The condition above is equivalent to asking that for any irreducible normal scheme $X \rightarrow \mathscr{X}$ the value of $\alpha$ at $k(X)$ belongs to $A^0(X)$, and that for a Henselian DVR $R$ we have $\alpha(k(v))=s_v(\alpha(k(R)))$, where $s_v$ is the map defined in \cite[sec.1, D4]{Ro96}.
\end{remark}
 
\begin{theorem}\label{sheafgeneral}
Cohomological invariants with coefficients in $M$ form a sheaf in the smooth-Nisnevich topology and smooth $0$-Nisnevich topology.
\end{theorem}
\begin{proof}
We can just repeat word by word the reasoning in (\ref{sheaf}).
\end{proof}

\begin{lemma}
When $X$ is a scheme  smooth over $k_0$ we have an injective map $A^0(X,M) \rightarrow \op{Inv}(X,M)$ assigning to an element $\alpha \in A^0(X,M)$ the invariant $\tilde{\alpha}$ defined by $\tilde{\alpha}(p)=p^*(\alpha)$.
\end{lemma}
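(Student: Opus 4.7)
The plan is to verify the two claims separately: that $\tilde{\alpha}$ is a well-defined cohomological invariant (that is, it satisfies the continuity condition in the sense of Remark \ref{sv}), and that the assignment $\alpha \mapsto \tilde{\alpha}$ has trivial kernel.

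For well-definedness, I would first recall that a cycle module gives functorial pullbacks of $A^0(-,M)$ along morphisms of smooth $k_0$-schemes, and that for a field $K$ one has $A^0(\op{Spec}(K),M) = M(K)$. So for any point $p \colon \op{Spec}(K) \to X$ the value $\tilde{\alpha}(p) = p^*\alpha \in M(K)$ is defined and the assignment is visibly natural in $K$. To check continuity, fix a (geometric) Henselian DVR $R$ with fraction field $F$, residue field $k$, and a map $f \colon \op{Spec}(R) \to X$; denote by $\iota_F$ and $\iota_k$ the two inclusions. By factoring $p_F = f \circ \iota_F$ and $p_k = f \circ \iota_k$ and applying functoriality of pullback, it suffices to show the continuity condition holds for the element $\beta := f^*\alpha$ restricted to the generic and closed points of $\op{Spec}(R)$. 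This reduces the problem to the statement that for $\beta \in M(F)$ with zero residue $\partial_v\beta = 0$, one has $\iota_k^*\beta = s_v(\beta)$ and $p(\beta) = i(s_v(\beta))$ in $M(v)$, both of which are among the defining axioms of cycle modules and the construction of $M(v)$ recalled in the lemma before the definition of cohomological invariants with coefficients in $M$ (and repackaged in Remark \ref{sv}).

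For injectivity, reduce to the case where $X$ is irreducible with generic point $\eta$. By the very definition, $A^0(X,M)$ sits as a subgroup of $M(k(X))$ (namely the kernel of the total residue map to codimension-one points). The value $\tilde{\alpha}(\eta) \in M(k(X))$ is, up to the canonical identification, simply $\alpha$ itself. Hence $\tilde{\alpha} = 0$ forces $\alpha = 0$; for reducible $X$ one argues component by component.

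The main technical obstacle I anticipate is the first step, specifically justifying the existence and compatibility of the pullback $f^*\alpha$ along $f \colon \op{Spec}(R) \to X$, since $\op{Spec}(R)$ itself is not smooth over $k_0$. The cleanest workaround is to use the restriction to \emph{geometric} Henselian DVRs, explicitly invoked in the paragraph preceding the statement, and realize $R$ as the Henselization of a local ring of an irreducible variety at a regular codimension-one point; the pullback of $\alpha$ then passes through a limit of pullbacks to smooth schemes, and the continuity identity $\alpha(k(v)) = s_v(\alpha(k(R)))$ becomes exactly the cycle module axiom (R3a) of \cite{Ro96} as used in Remark \ref{sv}.
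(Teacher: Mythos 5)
Your proposal is correct and follows essentially the same route as the paper: reduce to geometric Henselian DVRs, use functoriality of the pullback on $A^0(-,M)$, and identify the pullback along the closed-point inclusion with the specialization map $s_v$ (the paper handles the reduction to geometric DVRs by blowing up the image of the closed point, and treats injectivity as immediate from evaluation at the generic point, exactly as you do). One small correction: the identification of the closed-point pullback $r\circ J(i)$ with $s_v$ is Rost's Lemma 11.2, not the axiom (R3a), which concerns compatibility of residues with field extensions.
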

\begin{proof}
The only thing to check here is that the continuity condition is satisfied.

First one should note that by blowing up the image of the closed point we can restrict to geometric DVRs. Given a geometric DVR $(R,v)$, we can see it as the local ring of a regular variety at a point of codimension one. Then by \cite[11.2]{Ro96} we know that the specialization map $r\circ J(i)$ induced by the inclusion $i$ of the closed point is the same as the map $s_v$.

 It is immediate to verify that the pullback $A^0(X,M)\xrightarrow{f^*} A^0(\op{Spec}(R),M)$ is compatible with the specialization $A^0(\op{Spec}(R),M)\rightarrow A^0(\op{Spec}(k(v)),M)$, so that we have $r\circ J(i) \circ f^*=(f\circ i)^*$ and we can conclude.
\end{proof}

\begin{theorem}\label{A0}
Let $\mathscr{X}$ be an algebraic stack smooth over $k_0$. Then the sheaf of cohomological invariants of $\mathscr{X}$ with coefficients in a cycle module $M$ is isomorphic to the functor $X \rightarrow A^0(X,M)$ in the smooth-Nisnevich (resp smooth $0$-Nisnevich) site of $\mathscr{X}$.
\end{theorem}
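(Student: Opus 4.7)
The approach is to mirror the argument for Theorem \ref{princ} almost step by step, with $A^0(-,M)$ playing the role that the sheaf of regular invariants and unramified cohomology played there. The injection $A^0(X,M) \hookrightarrow \op{Inv}(X,M)$ on smooth schemes is already supplied by the preceding lemma, and Theorem \ref{sheaf} gives us the sheaf property in the smooth-Nisnevich site; what remains is to upgrade this injection to an isomorphism on smooth schemes and then pass from schemes to the stack $\mathscr{X}$ by sheafification.

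The heart of the argument is a cycle-module analogue of Lemma \ref{Inj}: if $R$ is a regular Henselian local $k_0$-algebra with residue field $k$ and fraction field $K$, and $\alpha \in \op{Inv}(\op{Spec}(R), M)$ vanishes on $\op{Spec}(K)$, then it vanishes on $\op{Spec}(k)$. I would prove this by induction on $\dim R$. The case $\dim R = 1$ is the content of Remark \ref{sv}, which reinterprets the continuity condition as $\alpha(k) = s_v(\alpha(K))$ via Rost's specialization map. For the inductive step, pick a regular parameter $x \in R$, apply the inductive hypothesis to $R_1 = R/(x)$ (still Henselian with residue field $k$), and then introduce the Henselization $R_2^h$ of the localization $R_{(x)}$, whose residue field is $k(R_1)$ and whose fraction field is $k(R)$; the vanishing of $\alpha$ on $\op{Spec}(K)$ pulls back to vanishing of the generic value of the pullback of $\alpha$ to $\op{Spec}(R_2^h)$, which by the $\dim = 1$ case forces $\alpha(k(R_1)) = 0$, and then by the inductive hypothesis on $R_1$ we conclude $\alpha(k) = 0$. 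From this Henselian statement one obtains, exactly as in Corollary \ref{pgen}, that on a smooth irreducible scheme $X$ a cohomological invariant is determined by its value at the generic point. Since by Remark \ref{sv} this generic value lies in $A^0(X,M)$, the composition $A^0(X,M) \hookrightarrow \op{Inv}(X,M) \to A^0(X,M)$ (evaluation at the generic point) is the identity and the right arrow is injective, so both maps are isomorphisms. Invoking Theorem \ref{sheaf} together with Corollary \ref{eqtopoi} then gives the tautological identification
$$\op{Inv}^{\bull}(\mathscr{X},M) = H^0((\textit{Sm}/\mathscr{X})_{\text{sm-Nis}}, \op{Inv}^{\bull}(-,M)) = H^0((\textit{Sm}/\mathscr{X})_{\text{sm-Nis}}, A^0(-,M)),$$
and the same argument goes through verbatim for the smooth $0$-Nisnevich topology.

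The main obstacle is the inductive step of the Henselian injectivity lemma. In the étale-cohomology case Gabber's theorem (Lemma \ref{coomhens}) let one freely identify cohomology over a Henselian ring with cohomology over its residue field, and the original proof of Lemma \ref{Inj} exploits this implicitly through the definition of $j$; for a general cycle module no such luxury is available, and one must instead manipulate the residue $\partial$, the specialization $s_v$, and the functorialities of $M$ directly from Rost's axioms R1--R4 in \cite{Ro96}. In particular, one must check that the compatibility between $s_v$ and pullback along $R \to R_2^h$ is the right one to propagate vanishing from $K$ down to $k(R_1)$. This is bookkeeping with Rost's axioms, but it is the single place where the argument genuinely departs from the étale-cohomology proof, and all other steps carry over without modification.
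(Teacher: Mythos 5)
Your proposal is correct and follows exactly the route the paper takes: the paper's own proof of Theorem \ref{A0} simply says ``reason as in Theorem \ref{princ}; the map $A^0(X,M)\to\op{Inv}(X,M)$ has an evident inverse given by evaluation at the generic point,'' and your argument is a faithful expansion of that, including the cycle-module analogue of Lemma \ref{Inj} with the $d=1$ case handled by the continuity condition via $s_v$ and the injectivity of $i:M(k(v))\to M(v)$. The bookkeeping you flag in the inductive step is indeed the only point needing care, and it goes through because the Henselization $R_2^h$ of $R_{(x)}$ has residue field $k(R_1)$ and its generic point dominates that of $\op{Spec}(R)$, exactly as in the \'etale-cohomology case.
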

\begin{proof}
We can reason as in theorem (\ref{princ}). The map $A^0(X,M) \rightarrow \op{Inv}(X,M)$ has an evident inverse given by taking the value at the generic point.
\end{proof}

\section{Algebraic spaces and Brauer groups}

For smooth schemes we have a very simple description of cohomolgical invariants as the unramified cohomology ring or equivalently the zero-codimensional Chow group with coefficients in a cycle module $M$. The following proposition extends the description to quasi-separated algebraic spaces:

\begin{proposition-definition}
Let $X$ be a quasi-separated algebraic space, smooth over the base field. We may suppose that $X$ is connected, with generic point $\xi$.

 Given a point $p \in X^{(1)}$ we have a boundary map $M(k(\xi))\xrightarrow{\delta_p} M(k(p))$ given by taking any Nisnevich neighbourhood $U$ of $p$ that is a scheme, pulling back to the generic point of $U$ and then taking the usual boundary map.

We define the \emph{unramified cohomology} of $X$ with coefficients in $M$, denoted $A^0(X,M)$ to be the kernel of the map 
\[ M(k(\xi)) \xrightarrow{\oplus \delta_p} \bigoplus_{p\in X^{(1)}} M(k(p))\]

We have an equality

\[ \op{Inv}^{\bull}(X,M)=A^0(X,M)\]
\end{proposition-definition}
\begin{proof}
First note that the boundary map $\delta_p$ does not depend choice of a specific Nisnevich neighbourhood due to the properties of cycle modules.

Consider now any Nisnevich covering $U \rightarrow X$. As $U$ is a smooth scheme we know that its cohomological invariants are equal to $A^{0}(U,M)=A^0(U,M)$. Suppose we have an element $\alpha \in A^0(X,M)$. Consider for each connected component $W$ of $U$ the element $\alpha_W$ obtained by pulling back $\alpha$. It is immediate to check that the element is unramified and it obviously glues to a cohomological invariant of $X$.

Now suppose an element $\alpha' \in A^0(U,M)$ glues to a cohomological invariant of $X$. As for schemes, one of the connected components of $U$ must be a Zariski neighbourhood of $\xi$, which easily implies that we can assume $\alpha'$ is the pullback of some element $\alpha \in M(k(\xi))$. Now saying that $\alpha'$ belongs to $A^0(U,M)$ is equivalent to saying that $\alpha$ belongs to $A^0(X,M)$.
\end{proof}

\begin{remark}
A more general approach would be to define the Chow groups with coefficients $A_{\bull}(X,M)$ of an algebraic space $X$ using the idea above. This is done in detail in the author's Ph.D. thesis \cite[chapter 2]{Pi15}.
\end{remark}

\begin{proposition}
Fix a cycle module $M$. Cohomological invariants with coefficients in $M$ are a birational invariant for quasi-separated algebraic spaces that are smooth and proper over the base field.
\end{proposition}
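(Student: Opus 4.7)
The plan is to reduce to the preceding proposition-definition and then mimic the classical proof that unramified cohomology is a birational invariant. By that result, for any smooth quasi-separated algebraic space $X$ we have $\op{Inv}^{\bull}(X,M) = A^0(X,M)$, canonically identified with the subgroup of $M(k(X))$ cut out by the residue maps $\delta_p$ over codimension-one points $p \in X^{(1)}$. Hence, for smooth proper $X,Y$ with $k(X) \cong k(Y)$, it suffices to show that these two subgroups coincide inside the common field $M(k(X)) = M(k(Y))$.

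Fix a birational map $f : Y \dashrightarrow X$. Because $Y$ is smooth (so regular in codimension one) and $X$ is proper, a graph-closure argument together with the valuative criterion of properness shows that the indeterminacy locus $Z$ of $f$ has codimension at least two in $Y$: the local ring at a codimension-one point of $Y$ is a DVR, so $f$ extends there, and by properness of the graph projection $\Gamma_f \to Y$ no codimension-one locus can be contracted. Let $V = Y \setminus Z$, so that $f$ restricts to an honest morphism $f_V : V \to X$. Since $Y$ is smooth and $Y \setminus V$ has codimension at least two, the sets of codimension-one points of $V$ and of $Y$ agree, and therefore the kernel-of-residues description in the proposition-definition gives $A^0(V,M) = A^0(Y,M)$ as subgroups of $M(k(Y))$.

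Next, pulling back invariants along $f_V$ yields, via the identification $\op{Inv}^{\bull} = A^0$, a homomorphism $f_V^{*} : A^0(X,M) \to A^0(V,M) = A^0(Y,M)$. Applying the same construction to a birational inverse $g : X \dashrightarrow Y$ produces a map $g_U^{*} : A^0(Y,M) \to A^0(X,M)$. Because both groups are identified with subgroups of the single field $M(k(X)) = M(k(Y))$ via the value at the generic point, and because $f_V$ and $g_U$ each induce the identity on function fields, both composites are the identity. This gives $A^0(X,M) = A^0(Y,M)$ inside $M(k(X))$, completing the proof.

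The main obstacle is justifying the codimension-two bound on the indeterminacy locus for a rational map between quasi-separated algebraic spaces with smooth source and proper target. For schemes this is classical; the graph-closure argument ought to carry over Nisnevich-locally, but one must verify that the local indeterminacy pieces glue to an honest closed subspace $Z \subset Y$ of codimension at least two. The rest is essentially formal bookkeeping using the sheafy description of $A^0$ already in hand.
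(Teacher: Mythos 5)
Your overall strategy is the same as the paper's: identify $\op{Inv}^{\bull}$ with the kernel-of-residues group $A^0(-,M)$ inside $M(k(X))=M(k(Y))$, and show that an invariant on one model is unramified at every codimension-one point of the other by extending the birational map over such points via properness. The step you flag as the ``main obstacle'' is, however, exactly where the real content lies, and your sketch of it does not go through as stated. For a rational map into a proper \emph{algebraic space}, the valuative criterion of properness is weaker than for schemes: in general it only produces a lift after a finite extension of the discrete valuation ring. So the graph-closure argument at a codimension-one point of $Y$ (whose local ring is a DVR) does not directly yield an extension of $f$ there, and the claim that the indeterminacy locus has codimension at least two is not justified by the scheme-theoretic argument you invoke.

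The paper resolves this with Chow's lemma: there is a blow-up $Y' \rightarrow Y$ with $Y'$ a proper \emph{scheme}, still birational to $X$, and one applies the honest valuative criterion to the rational map from (a Nisnevich neighbourhood of) a codimension-one point of $X$ into the scheme $Y'$, then composes back down to $Y$. A second, smaller difference: the paper never constructs a global open $V\subset Y$ with complement of codimension two -- it only needs, for each $x\in X^{(1)}$ separately, some Nisnevich neighbourhood $U_x$ mapping to $Y$ compatibly with the function-field identification; since $A^0(X,M)$ is cut out by residue conditions checked one codimension-one point at a time, this local statement suffices and the gluing issue you worry about never arises. If you replace your global indeterminacy-locus argument by this pointwise one and insert the Chow's lemma reduction, your proof closes up and coincides with the paper's.
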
 
\begin{proof}
Let $X,Y$ be algebraic spaces, smooth and proper over $k_0$, and suppose that we have an isomorphism between open subsets $U \subset X, U' \subset Y$. Then $\op{Inv}^{\bull}(X,M), \op{Inv}^{\bull}(Y,M)$ both inject to $\op{Inv}(U)=\op{Inv}(U',M)$.

We want to show that any $\alpha \in \op{Inv}^{\bull}(Y,M)$ is unramified on all points of codimension $1$ on $X$ when seen as an element of $M(k(Y))=M(k(X))$, implying that it must belong to $A^0(X,M)=\op{Inv}^{\bull}(X,M)$.

Suppose that given any point $x \in X^{(1)}$ we have a Nisnevich neighbourhood $U_x$ of $x$, and a map $U_x\rightarrow Y$ factoring through $X$. Then we can pull back an element $\alpha\in A^0(Y,M)$ to $U_x$ and we get the same element of $\op{H}^{\bull}(k(X)=k(Y))$. Then $\alpha$ must be unramified on $U_x$.

To prove the statement above, it just suffices to use a weaker version of Chow's lemma \cite[088S]{St15}, which says that we have a blow up $Y' \rightarrow Y$, where $Y'$ is a proper scheme. As the blowup is birational we still have a map from an open subset of $X$ to $Y'$. We can then apply the valutative criterion for properness to the rational map $U_x \rightarrow Y'$ to conclude.

Applying the reasoning above to both $X$ and $Y$ we get the equality \[\op{Inv}^{\bull}(X)=\op{Inv}^{\bull}(Y,M)\subset M(k(X)=k(Y)) \qedhere\]
\end{proof}

\begin{corollary}
Let $X$ be a smooth proper algebraic space. Suppose that there exists an element $\alpha \in M(k(X))$ that is ramified over an irreducible divisor $D$. Then there cannot be any contraction $X \rightarrow Y$ where $Y$ is proper smooth that contracts $D$.  
\end{corollary}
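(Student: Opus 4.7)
The plan is to argue by contradiction, using the preceding birational invariance proposition together with the identification $\op{Inv}^{\bull}(-,M) = A^0(-,M)$ for smooth algebraic spaces. So suppose we had a contraction $f: X \to Y$ with $Y$ smooth and proper that contracts the divisor $D$. Since $f$ is birational we have $k(X) = k(Y)$, so the element $\alpha$ may be viewed as an element of $M(k(Y))$.

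The first step is to show that $\alpha$ is unramified at every codimension-$1$ point of $Y$, i.e.\ that $\alpha \in A^0(Y,M) = \op{Inv}^{\bull}(Y,M)$. By assumption the image $f(D)$ has codimension at least $2$ in $Y$, so given any $q \in Y^{(1)}$ we have $q \notin f(D)$, and $f$ restricts to an isomorphism between a Nisnevich neighbourhood of $q$ and a Nisnevich neighbourhood of the unique codimension-$1$ point $q' \in X$ above $q$; in particular $q'$ does not lie on $D$. Since $\alpha$ is ramified only at $D$, it is unramified at $q'$, and by the compatibility of the residue $\delta_q$ with pullback along the local isomorphism (which is how $\delta_q$ is defined for algebraic spaces in the previous proposition-definition) it is unramified at $q$.

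Now apply the birational invariance proposition: since $X$ and $Y$ are smooth and proper and share an open dense subset (namely $Y\setminus f(D)$, isomorphic to its preimage in $X$), we obtain
\[ \op{Inv}^{\bull}(X,M) = \op{Inv}^{\bull}(Y,M) \subset M(k(X)=k(Y)).\]
Combined with the previous step, this forces $\alpha \in \op{Inv}^{\bull}(X,M) = A^0(X,M)$, i.e.\ $\alpha$ is unramified at every codimension-$1$ point of $X$. This contradicts the hypothesis that $\alpha$ is ramified along $D$.

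The only non-routine point is the first step: one must be a bit careful that the residue map $\delta_q$ on the algebraic space $Y$ as defined earlier really is computed by passing to any Nisnevich neighbourhood that happens to be a scheme, and that such a neighbourhood can be chosen via $f$ to coincide with one near $q'$ in $X$. This is immediate from the definition given in the preceding proposition-definition together with the fact that $f$ is an isomorphism over $Y\setminus f(D)$, so no genuine difficulty arises.
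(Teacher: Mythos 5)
Your argument is correct and is exactly the intended one: the paper states this corollary without proof as an immediate consequence of the birational invariance proposition, and your write-up simply fills in the omitted details (that $\alpha$ lands in $A^0(Y,M)$ because every codimension-one point of $Y$ is identified, Nisnevich-locally via $f$, with a codimension-one point of $X$ away from $D$, and then $A^0(Y,M)=A^0(X,M)$ forces a contradiction). No issues.
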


We want to study the cohomological Brauer group $\op{Br}'(X)=(H^{2}_{\textit{ét}}(X,G_m))_{\textit{tors}}$. First we prove that it is a Nisnevich sheaf.

\begin{proposition}
Let $X$ be a smooth algebraic space. Then functor $U \rightarrow \op{Br}'(U)$ is a sheaf on the Nisnevich site of $X$.
\end{proposition}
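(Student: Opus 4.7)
The plan is to reduce the statement to the $n$-torsion subgroups of $\op{Br}'$ and exploit the Kummer sequence, relying on the Nisnevich sheaf property of $\op{Pic}$ together with the results on étale cohomology with torsion coefficients established earlier in the paper.

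First I would observe that since $\op{Br}'(U) = \bigcup_{(n,\,\op{char}(k_0))=1} \op{Br}'(U)[n]$ and a filtered colimit of Nisnevich sheaves is again a Nisnevich sheaf, it suffices to show that $U \mapsto \op{Br}'(U)[n]$ is a sheaf for each such $n$. For such $n$, the Kummer exact sequence $1 \to \mu_n \to \mathbb{G}_m \xrightarrow{n} \mathbb{G}_m \to 1$ yields the natural short exact sequence of presheaves on $X_{\text{Nis}}$
\[
0 \to \op{Pic}(-)/n \to H^{2}_{\text{ét}}(-, \mu_n) \to \op{Br}'(-)[n] \to 0.
\]

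Next I would establish Nisnevich descent for the two left-hand terms. The presheaf $\op{Pic}$ is a Nisnevich sheaf on smooth algebraic spaces: by Hilbert 90 the Picard group of a Henselian local ring is trivial, so étale, Nisnevich and Zariski $\op{Pic}$ agree on a smooth base, and for the Nisnevich cover $V \to X$ descent follows from the Leray spectral sequence for $\pi : X_{\text{ét}} \to X_{\text{Nis}}$ using $R^{1}\pi_{*}\mathbb{G}_{m} = 0$. For $H^{2}_{\text{ét}}(-, \mu_n)$, Gabber's theorem (Lemma \ref{coomhens}, applied with $p$ replaced by $n$) identifies the value on a Henselian local ring with that of its residue field, so the Čech-to-derived-functor spectral sequence for $V \to X$ combined with the fact that Nisnevich covers split on Henselizations forces the Čech differentials in $\mu_{n}$ to vanish in the relevant bidegrees, giving the sheaf condition for the $H^{2}(-, \mu_n)$ row.

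Finally, a diagram chase comparing the Kummer short exact sequences on $X$, $V$, and $V \times_{X} V$ extracts the sheaf condition for $\op{Br}'(-)[n]$ from those of the other two terms, after which Step 1 completes the argument. The main obstacle I expect is the $\op{Pic}/n$ term: the quotient by $n$ does not commute with sheafification, so the exact sequence of presheaves above cannot be handled termwise. The cleanest remedy is to use instead the longer Kummer exact sequence including $\op{Pic}(U) \xrightarrow{n} \op{Pic}(U) \to H^{2}(U, \mu_n)$ and apply the snake lemma, so that the obstruction reduces to the sheaf property of $\op{Pic}$ itself, which is already known. Alternatively one can localize at Nisnevich stalks directly, where $\op{Pic}/n$ vanishes and the middle and right-hand terms become canonically identified.
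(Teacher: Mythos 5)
There is a genuine gap, and it sits exactly where you locate your "main obstacle": neither of the two descent claims in your third step is true. The presheaf $U \mapsto \op{Pic}(U)$ is not a Nisnevich sheaf — it is not even Zariski-separated. You are conflating two different statements: that the Nisnevich \emph{sheafification} of $\op{Pic}$ vanishes (true, because the stalks are Picard groups of Henselian local rings, and this is what $R^1\pi_*\mathbb{G}_m=0$ expresses) and that the presheaf $\op{Pic}$ satisfies the equalizer condition for covers (false: $\op{Pic}(\mathbb{P}^1)=\mathbb{Z}$ restricts to zero on the standard cover by two affine lines). The same example kills your claim that $U \mapsto H^2_{\textit{ét}}(U,\mu_n)$ is a Nisnevich sheaf: $H^2_{\textit{ét}}(\mathbb{P}^1_{\mathbb{C}},\mu_n)=\mathbb{Z}/n$ (the class of $\mathcal{O}(1)$ modulo $n$) restricts to zero on both copies of $\mathbb{A}^1$. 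Indeed the paper's Theorem \ref{teo1} says that the Nisnevich sheafification of $\op{H}^{\bull}$ is unramified cohomology, which on a smooth scheme is in general a proper subquotient of $\op{H}^{\bull}(X)$. The whole content of the proposition is that these two failures cancel against each other in the quotient $\op{Br}'[n]=H^2(-,\mu_n)/(\op{Pic}(-)/n)$; assuming both terms are sheaves and chasing the diagram begs the question. Neither proposed repair works: the "sheaf property of $\op{Pic}$ itself" is not "already known" (it is false), and localizing at Nisnevich stalks only identifies sheafifications, whereas the proposition asserts that the presheaf $\op{Br}'$ already agrees with its sheafification on all objects — a statement invisible at the level of stalks. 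A secondary issue: reducing to $n$-torsion with $(n,\op{char}(k_0))=1$ omits the $p$-primary part of $\op{Br}'$ in characteristic $p$, and this proposition is stated before the paper's standing characteristic-zero hypothesis is imposed.

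For comparison, the paper avoids Kummer theory entirely and works with $\mathbb{G}_m$ directly: the Grothendieck spectral sequence for $i: X_{\text{Nis}} \rightarrow X_{\textit{ét}}$ reduces the statement to the vanishing of $H^1_{\text{Nis}}(X,R^1i_*\mathbb{G}_m)$ (which is the vanishing of the sheafified Picard group — the true version of your observation) and of $H^2_{\text{Nis}}(X,\mathbb{G}_m)$, the latter via the divisor exact sequence $0 \rightarrow \mathbb{G}_m \rightarrow \rho_*\mathcal{O}^* \rightarrow \oplus_p\, p_*\mathbb{Z} \rightarrow 0$ on a smooth (hence locally factorial) space together with the Nisnevich-acyclicity of pushforwards from points. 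If you want to salvage a Kummer-type argument, you would have to prove injectivity of $H^2_{\textit{ét}}(X,\mathbb{G}_m)\rightarrow H^2_{\textit{ét}}(V,\mathbb{G}_m)$ for Nisnevich covers independently, and at that point you are essentially forced back to the spectral-sequence computation.
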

\begin{proof}
Consider the Grothendieck spectral sequence associated to the inclusion of sites $X_{\textit{Nis}} \xrightarrow{i} X_{\textit{ét}}$. It reads 

\[ H^p_{\textit{Nis}}(X,R^q_{i_*}(F)) \Rightarrow H^{p+q}_{\textit{ét}}(X,F)\]

When $F=G_m$ the terms in the second diagonal are $H^{2}_{\textit{Nis}}(X,G_m)$, $H^{1}_{\textit{Nis}}(X,R^1_{i_*}(\mathbb{G_m})$ and $H^{0}_{\textit{Nis}}(X,R^0_{i_*}(G_m))$, thus proving that the first two terms are zero will imply our proposition. First note that the second term is the cohomology of the sheafification of the Picard group in the Nisnevich topology, which is zero.

We still need to prove that $H^{2}_{\textit{Nis}}(X,G_m)=0$. Let $\rho: \op{Spec}(k(X)) \rightarrow X$ be the inclusion of the generic point. The usual resolution 

\[ 0 \rightarrow G_m \rightarrow \rho_* \mathcal{O}^* \rightarrow \bigoplus_{p \in X^{(1)}} p_* \mathbb{Z} \rightarrow 0\]

holds in this situation. Given the inclusion $p: \op{Spec}(K) \rightarrow X$ we can consider the Leray-Cartan spectral sequence \cite[1.22.1]{Ni89}
\[ H^{j}_{\textit{Nis}}(X,R^i_{p_*}(F)) \Rightarrow H^{i+j}_{\textit{Nis}}(\op{Spec}(K),F)\]
Due to the fact that the Nisnevich toplogy on the spectrum of a field is trivial, we know that $H^{n}(\op{Spec}(K),F)=0$ for $n>0$. Consequently we also have $R^i_{p_*}(F)=0$ for $i >0$, and this shows that any sheaf in the form $p_*(F)$ is acyclic in the Nisnevich topology.

Finally, the Nisnevich site of a Noehterian algebraic space is a Noetherian site, which implies that taking direct sums commutes with taking cohomology. This shows that both $\rho_* \mathcal{O}^*$ and $\bigoplus_{p \in X^{(1)}} p_* \mathbb{Z}$ are acyclic, allowing us to conclude.
\end{proof}

In the rest of the section we will assume that $k_0$ is a field of characterstic equal to zero, so that $\mu_n$ is an étale sheaf. Note that we can consider the cycle module $\op{H}^{\bull}(n,-1)=\bigoplus_{i} H^{i}_{\textit{ét}}(-,\mu_{n}^{i-1})$, which has the property that in degree two it is exactly the sheafification of the cohomology of $\mu_n$.

\begin{lemma}
Suppose that $\op{char}(k_0)=0$. Denote by $Br'_n$ the $n$-torsion in the cohomological Brauer group. Then for a quasi-separated algebraic space, smooth over $k_0$ we have \[(\op{Br}'(X))_n = \op{Inv}^2(X,\op{H}^{\bull}(n,-1)).\]
\end{lemma}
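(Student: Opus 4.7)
The plan is to combine the identification of cohomological invariants with unramified cohomology (from the previous proposition-definition) with the classical purity theorem for the Brauer group.

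As a preliminary reduction, both functors $X \mapsto \op{Br}'(X)_n$ and $X \mapsto \op{Inv}^2(X, \op{H}^{\bull}(n,-1))$ are sheaves in the Nisnevich topology on the small site of $X$: the former by the preceding proposition, and the latter by the sheaf theorem for invariants with coefficients in a cycle module proven earlier in this section. Since every quasi-separated algebraic space admits an étale Nisnevich cover by a scheme \cite[6.3]{Kn71}, I would first reduce to the case where $X$ is a smooth scheme over $k_0$.

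For a smooth scheme $X$, applying the previous proposition-definition to the cycle module $M = \op{H}^{\bull}(n,-1)$ identifies $\op{Inv}^2(X, \op{H}^{\bull}(n,-1))$ with the kernel of
\[ H^2_{\text{ét}}(k(X), \mu_n) \xrightarrow{\oplus \delta_p} \bigoplus_{p \in X^{(1)}} H^1_{\text{ét}}(k(p), \mathbb{Z}/n), \]
where $\delta_p$ is Rost's residue on the cycle module $\op{H}^{\bull}(n,-1)$. At the generic point, the Kummer exact sequence together with the vanishing $\op{Pic}(k(X)) = 0$ produces a natural isomorphism $H^2_{\text{ét}}(k(X), \mu_n) \simeq \op{Br}'(k(X))_n$. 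I would then verify that under this isomorphism the cycle-module residue $\delta_p$ corresponds to the classical Brauer-theoretic residue at the codimension-one point $p$, landing in $H^1_{\text{ét}}(k(p), \mathbb{Z}/n)$ viewed as the group of $n$-torsion characters.

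With these identifications, the lemma reduces to the assertion that an $n$-torsion class in $\op{Br}'(k(X))$ extends to an element of $\op{Br}'(X)$ if and only if it is unramified at every codimension-one point of $X$; this is the Auslander--Goldman--Grothendieck purity theorem for the Brauer group of a regular scheme, available here because $k_0$ has characteristic zero and $X$ is smooth. The step I expect to be the main obstacle is the compatibility of Rost's cycle-module residue with the classical Brauer-theoretic residue: both are incarnations of the tame symbol, but matching the conventions of the cycle-module formalism with the Azumaya-algebra picture requires some bookkeeping. Once this compatibility is settled, the lemma follows by combining the reduction to schemes, the Kummer identification at the generic point, and purity.
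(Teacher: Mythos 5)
Your argument is correct, but it is not the route the paper takes. The paper's proof is a short sheaf-theoretic one: the Kummer sequence $0 \to \mu_n \to \mathbb{G}_m \to \mathbb{G}_m \to 0$ gives a presheaf surjection $H^2_{\text{\'et}}(-,\mu_n) \to \op{Br}'(-)_n$ with kernel $\op{Pic}(-)/n$; since the Picard group vanishes Nisnevich-locally, the Nisnevich sheafification of $H^2(-,\mu_n)$ (whose global sections are $\op{Inv}^2(X,\op{H}^{\bull}(n,-1))$ by Theorem \ref{A0}) coincides with the sheaf $\op{Br}'_n$ (a sheaf by the preceding proposition), and one takes global sections. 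You instead reduce to schemes by Nisnevich descent, identify $\op{Inv}^2$ with the kernel of Rost's residues on $H^2(k(X),\mu_n)\simeq \op{Br}(k(X))_n$, match Rost's residue with the classical Brauer-theoretic residue, and invoke codimension-one purity for the Brauer group of a smooth scheme. Both proofs work in characteristic zero; the trade-off is that your version imports a substantially heavier external input (Grothendieck--Gabber purity for $\op{Br}'$ of regular schemes, plus the residue comparison you rightly flag as the delicate bookkeeping step), whereas the paper's argument stays entirely inside the machinery already built in this section and needs nothing beyond the Kummer sequence and the local triviality of $\op{Pic}$. In exchange, your route yields the concrete residue-theoretic description of $\op{Br}'(X)_n$ as the unramified subgroup of $\op{Br}(k(X))_n$, which the paper only obtains implicitly. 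One small point to make explicit in your step (5): you need the injectivity of $\op{Br}'(X)\to\op{Br}(k(X))$ for $X$ regular integral so that ``extends'' really is an identification rather than just a surjection.
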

\begin{proof}
This is an easy consequence of the étale exact sequence
\[ 0 \rightarrow \mu_n \rightarrow G_m \xrightarrow{n} G_m \rightarrow 0\]
and the fact that the Picard group is locally zero in the Nisnevich topology.
\end{proof}

The last propositions allows to easily use the properties of cohomological invariants to prove properties of the cohomological Brauer group.

\begin{proposition}\label{brauer}
Suppose that $\op{char}(k_0)=0$. Then:
\begin{enumerate}
\item (Purity) If $X$ is a smooth algebraic space, and $Z$ is a closed subspace of codimension $2$ or greater, then $\op{Br}'(X)= \op{Br}'(X\smallsetminus Z)$.
\item (Birational invariance) Let $X,Y$ be algebraic spaces smooth and proper over $k_0$. If they are birational to each other then $\op{Br}'(X)=\op{Br}'(Y)$.
\end{enumerate}
\end{proposition}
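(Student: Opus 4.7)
The plan is to reduce both statements to the corresponding results for cohomological invariants with coefficients in a cycle module, which are already established in the paper, via the identification $(\op{Br}'(X))_n = \op{Inv}^2(X,\op{H}^{\bull}(n,-1))$ given in the preceding lemma.

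First, since $\op{Br}'(X)$ is by definition torsion, we have a direct limit decomposition
\[ \op{Br}'(X) = \bigcup_{n\geq 1} (\op{Br}'(X))_n.\]
For each $n$ the lemma identifies the $n$-torsion subgroup with $\op{Inv}^2(X,\op{H}^{\bull}(n,-1))$, and the isomorphisms are compatible with the restriction maps $\op{Br}'(X) \to \op{Br}'(U)$ induced by open immersions $U \to X$ (both come from restriction in étale cohomology). Hence it suffices to prove both statements for $\op{Inv}^2(-,\op{H}^{\bull}(n,-1))$ for every $n$, and then take the union over $n$.

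For purity, I would use the fact that the main description of cohomological invariants in terms of Rost's zero-codimensional Chow group with coefficients (Theorem \ref{A0}, and the explicit unramified description in the Proposition-Definition) tells us that $\op{Inv}^{\bull}(X,M) = A^0(X,M)$ is defined as a kernel of a residue map indexed by points of $X^{(1)}$. If $Z \subset X$ has codimension $\geq 2$, then $X$ and $X \smallsetminus Z$ have the same generic point and the same points of codimension $1$ (since none of them lie in $Z$), so the defining kernel is literally the same group. This gives $\op{Inv}^2(X,\op{H}^{\bull}(n,-1)) = \op{Inv}^2(X\smallsetminus Z,\op{H}^{\bull}(n,-1))$, and taking the union over $n$ yields the purity statement.

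For birational invariance, I would directly invoke the birational invariance proposition proved just before in this section, which applies to quasi-separated algebraic spaces smooth and proper over $k_0$ and to any cycle module $M$. Applied with $M = \op{H}^{\bull}(n,-1)$ it yields $\op{Inv}^2(X,\op{H}^{\bull}(n,-1)) = \op{Inv}^2(Y,\op{H}^{\bull}(n,-1))$ inside the common function field $M(k(X)) = M(k(Y))$, and again taking the union over $n$ gives $\op{Br}'(X) = \op{Br}'(Y)$.

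The only subtle point, and the one I would be most careful about, is checking that the isomorphism of the lemma is natural enough: that the identifications $(\op{Br}'(X))_n \simeq \op{Inv}^2(X,\op{H}^{\bull}(n,-1))$ are compatible with restriction to opens (for purity) and with the common inclusion into $M(k(X))$ (for birational invariance), so that the identifications piece together as $n$ varies and so that the equalities of invariant groups transfer back to equalities of Brauer groups rather than merely isomorphisms of abstract abelian groups. Once this naturality is in hand, the proposition follows formally from the results of the previous sections without further computation.
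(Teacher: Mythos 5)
Your proposal is correct and follows essentially the same route as the paper: the paper's own proof is exactly the reduction to $n$-torsion via the preceding lemma, followed by an appeal to the purity and birational-invariance properties of $\op{Inv}^{\bull}(-,\op{H}^{\bull}(n,-1))$ established earlier in the section. You simply spell out more explicitly why purity holds for $A^0$ (identical sets of codimension-one points) and flag the naturality of the identifications, both of which the paper leaves implicit.
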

\begin{proof}
The properties above hold for $Br'$ if and only if they hold for $Br'_n$ for all $n$. Thus they follow immediately from the properties of $\op{Inv}^{\bull}(-,\op{H}^{\bull}(n,-1))$.
\end{proof}

Note that we cannot hope to have birational invariance in the very next case in term of generality, the case of Deligne-Mumford stacks:

\begin{remark}
Consider the scheme $P^1 \times P^1$ and the Deligne-Mumfor stack $\left[ P^1 \times P^1 / \mu_2 \times \mu_2 \right]$, where the action is given by $(\left[ a: b \right],\left[ c:d \right]) \times (\alpha, \beta) \rightarrow (\left[ \alpha a : b\right],\left[ \beta c:d \right])$. These two are proper, smooth, and birational, as $\left[G_m/\mu_2 \right] = G_m$. 

The Brauer group and cohomological invariants of $P^1 \times P^1$ are trivial, but one can easily see, using equivariant theory, that 

\begin{center}$\op{Inv}^2(\left[ P^1 \times P^1 / \mu_2 \times \mu_2 \right],\op{H}^{\bull}(2,-1))=\op{Br}'(\left[ P^1 \times P^1 / \mu_2 \times \mu_2 \right])_2=\mathbb{Z}/2\mathbb{Z}$.
\end{center}
\end{remark}


\begin{thebibliography}{ASM}

\bibitem[BF03]{BF03}
G.~Berhuy and G.~Favi, \emph{Essential dimension: a functorial point of view
  (after {A}.{M}erkurjev)}, Documenta Mathematica \textbf{8, 279-330} (2003).

\bibitem[BO74]{BO74}
Spencer Bloch and Arthur Ogus, \emph{Gersten's conjecture and the homology of
  schemes}, Annales Scientifiques de l'\'E.N.S. \textbf{$4^{e}$ série, tome 7,
  n. 2} (1974).

\bibitem[BR97]{BR97}
J.~Buhler and Zinovy Reichstein, \emph{On the essential dimension of a finite
  group}, Compositio Mathematica \textbf{106(2), 159-179} (1997).

\bibitem[BRV11]{BRA11}
Patrick Brosnan, Zinovy Reichstein, and Angelo Vistoli, \emph{Essential
  dimension of moduli of curves and other algebraic stacks, with an appendix by
  {N}ajmuddin {F}akhruddin}, Inventiones Mathematicae \textbf{no. 4, 1079-1112}
  (2011).

\bibitem[CTO89]{CO89}
Jean-Luis Colliot-Thelene and Manuel Ojanguren, \emph{Variétes unirationelles
  non rationélles: au-delà de l'exemple d'{A}rtin et {M}umford}, Journal of
  the European Math. Society (JEMS) \textbf{vol.97, 141-158.} (1989).

\bibitem[EG96]{EG96}
Dan Edidin and William Graham, \emph{Equivariant intersection theory}, Invent.
  Math \textbf{131} (1996).

\bibitem[GMS03]{GMS03}
Skip Garibaldi, Alexander Merkurjev, and Jean-Pierre Serre, \emph{Cohomological
  invariants in galois cohomology, university lectures series, vol.28},
  American Mathematical Society, 2003.

\bibitem[Gui08]{Gu08}
Pierre Guillot, \emph{Geometric methods for cohomological invariants},
  Documenta Mathematica \textbf{vol.12 521–545.} (2008).

\bibitem[Knu71]{Kn71}
Donald Knutson, \emph{Algebraic spaces}, Springer, Lecture notes in
  Mathematics, vol.203, 1971.

\bibitem[Kre99]{Kr99}
Andrew Kresch, \emph{Cycle groups for {A}rtin stacks}, Invent. Math.
  \textbf{138} (1999), no.~3, 495--536.

\bibitem[Liu02]{Li02}
Qing Liu, \emph{Algebraic geometry and arithmetic curves, oxford graduate texts
  in mathematics, vol.6}, Oxford Mathematics, 2002.

\bibitem[LMB99]{LMB99}
Gérard Laumon and Laurent Moret-Bailly, \emph{Champs algébriques, a series in
  modern suveys in mathematics, vol.9}, Springer, 1999.

\bibitem[Nis89]{Ni89}
Yevsey Nisnevich, \emph{The completely decomposed topology on schemes and
  associated descent spectral sequences in algebraic k-theory}, NATO Advanced
  Science Institutes Series C: Mathematical and Physical Sciences, Dordrecht:
  Kluwer Academic Publishers Group \textbf{Vol. 279, pp. 241–342} (1989).

\bibitem[Pir15]{Pi15}
Roberto Pirisi, \emph{Cohomological invariants of algebraic curves}, Ph.D.
  thesis, Scuola Normale Superiore di Pisa, 2015.

\bibitem[Ros96]{Ro96}
Markus Rost, \emph{Chow groups with coefficients}, Documenta Mathematica
  \textbf{vol.1,319-393} (1996).

\bibitem[Sal84]{Sa84}
David~J Saltman, \emph{Noether's problem over an algebraically closed field},
  Inventiones Mathematicae \textbf{vol.77:1, 71-84.} (1984).

\bibitem[{Sta}15]{St15}
The {Stacks Project Authors}, \emph{Stacks project},
  \url{http://stacks.math.columbia.edu}, 2015.

\bibitem[Wit37]{Wi37}
Ernst Witt, \emph{Theorie der quadratischen formen in beliebigen k\"orpern},
  J.reine angew Math. \textbf{vol.176 31-44.} (1937).

\end{thebibliography}
\end{document}